\documentclass[11pt,letterpaper,reqno]{amsart}
\usepackage{amssymb}
\usepackage{amsmath}
\usepackage{amsthm}
\usepackage{amsfonts}
\usepackage{bbm}
\usepackage{enumitem} 
\usepackage{booktabs}
\usepackage{graphicx}
\usepackage{xcolor}
\usepackage[T1]{fontenc}
\usepackage{doi}
\usepackage{comment} 
\usepackage{hyperref}
\hypersetup{pdfstartview={FitH}}
\usepackage{bookmark}
\usepackage[capitalize,noabbrev]{cleveref}

\usepackage{listings}
\usepackage[most]{tcolorbox}

\newtheorem{theorem}{Theorem}[section]
\newtheorem{lemma}[theorem]{Lemma}

\newtheorem{corollary}[theorem]{Corollary}

\newtheorem{conjecture}[theorem]{Conjecture}
\theoremstyle{definition}

\newtheorem{remark}[theorem]{Remark}
\newtheorem{definition}[theorem]{Definition}

\newtheorem*{proposition*}{Proposition}
\newtheorem*{theorem*}{Theorem}
\newtheorem*{corollary*}{Corollary}

\DeclareMathOperator{\tr}{tr}
\DeclareMathOperator{\spec}{spec}

\newcommand{\norm}[1]{\left\lVert #1\right\rVert}
\newcommand{\QQ}{\mathbb{Q}}

\begin{document}

\title[Positive $p$-Energies and Laplacian-Type Spectra]{Path-Minimality for Positive $p$-Energies, Laplacian-Type Spectra, and Line Graphs}
\author[Y.~Liu]{Yinchen Liu}
\author[Q.~Tang]{Quanyu Tang}

\address{Institute for Interdisciplinary Information Sciences, Tsinghua University, Beijing 100084, P. R. China}
\email{liuyinch23@mails.tsinghua.edu.cn}

\address{School of Mathematics and Statistics, Xi'an Jiaotong University, Xi'an 710049, P. R. China}
\email{tangquanyu827@gmail.com}

\subjclass[2020]{Primary 05C50; Secondary 05C35, 15A18}

\keywords{graph energy, Laplacian spectrum, signless Laplacian, line graph}

\begin{abstract}
We derive several applications of the path-minimality theorem for adjacency
$p$-energy proved in the companion paper. First, we prove the sharp inequality
$$
        \mathcal E_p^+(G)\ge \mathcal E_p^+(P_n),
$$
where $P_n$ is the path on $n$ vertices, in three settings: connected bipartite graphs for every real $p\ge2$, all connected graphs for every odd integer $p\ge3$, and all connected graphs for $p=4$. Second, using subdivision graphs, we prove path-minimality for Laplacian and signless Laplacian-type spectral sums, including power sums, Estrada-type quantities, resolvent energies, and thresholded tails. Third, we prove an edge-count second-order stop-loss comparison for the signless Laplacian above the threshold $2$. This yields the sharp line-graph inequality
$$
        \mathcal E_p^+(\mathcal L(G))\ge \mathcal E_p^+(P_m)
$$
for every connected graph $G$ with $m$ edges and every real $p\ge2$.
\end{abstract}

\maketitle

\section{Introduction}

Throughout this paper, all graphs are finite, simple, undirected, and unweighted.  Let \(G\) be a graph with vertex set \(V(G)=\{v_1,\ldots,v_n\}\), and let \(A(G)\) be its \(n\times n\) adjacency matrix, whose \((i,j)\)-entry is \(1\) if \(v_iv_j\in E(G)\) and is \(0\) otherwise.  We write
\[
        \lambda_1(G),\lambda_2(G),\ldots,\lambda_n(G)
\]
for the eigenvalues of \(A(G)\), counted with multiplicity. For a real number \(x\), we write \(x_+:=\max\{x,0\}\) for its positive part. For \(p>0\) we define the \emph{adjacency \(p\)-energy} of \(G\) by
\[
        \mathcal E_p(G):=\sum_{i=1}^n |\lambda_i(G)|^p .
\]
We write \(P_n\) for the path on \(n\) vertices. The companion paper~\cite{LiuTangCore} proves that, for every connected graph
\(G\) on \(n\) vertices and every real \(p\ge2\),
\[
        \mathcal E_p(G)\ge \mathcal E_p(P_n),
\]
with equality for fixed \(p>2\) only when \(G\cong P_n\).  The proof also
establishes a stronger second-order stop-loss comparison for the squared
singular values of connected bipartite graphs.  The purpose of the present
paper is to turn these comparison principles into sharp extremal results for
several related spectral quantities.

The first application concerns positive adjacency \(p\)-energy.  If
\[
        \mathcal E_p^+(G):=\sum_{\lambda_i(G)>0}\lambda_i(G)^p,
\]
then the conjecture of Tang, Liu and Wang~\cite[Conjecture~4.6]{TangLiuWang2026} asks whether
\[
        \mathcal E_p^+(G)\ge \mathcal E_p^+(P_n)
\]
holds for every connected \(n\)-vertex graph \(G\) and every \(p\ge2\).  We verify this sharp path lower bound
in three cases.  It holds for all connected bipartite graphs and all real
\(p\ge2\), for all connected graphs when \(p\ge3\) is an odd integer, and for
all connected graphs when \(p=4\).  The first two statements are short
consequences of the total \(p\)-energy theorem, while the \(p=4\) case uses a
vertex-addition estimate for positive \(p\)-energy.

The second application is to Laplacian-type spectra. For a graph \(G\), let \(D(G)\) denote the diagonal matrix of vertex degrees and let \(A(G)\) denote the adjacency matrix.  The \emph{Laplacian matrix} and the \emph{signless Laplacian matrix} of \(G\) are
\[
        L(G):=D(G)-A(G),
        \qquad
        Q(G):=D(G)+A(G),
\]
respectively.  Both matrices are real symmetric and positive semidefinite. For a real symmetric matrix \(M\), we write $\lambda_1(M)\ge\lambda_2(M)\ge\cdots$ for its eigenvalues in nonincreasing order, repeated with multiplicity. Using the subdivision graph and the unsigned incidence matrix, we convert the bipartite comparison
from the companion paper~\cite{LiuTangCore} into path-minimality statements for \(L(G)\) and
\(Q(G)\).  In particular, for every connected \(G\) on \(n\) vertices and every
\(\alpha\ge1\), we prove
\[
        \sum_{i=1}^n \lambda_i(L(G))^\alpha
        \ge
        \sum_{i=1}^n \lambda_i(L(P_n))^\alpha,
        \qquad
        \sum_{i=1}^n \lambda_i(Q(G))^\alpha
        \ge
        \sum_{i=1}^n \lambda_i(Q(P_n))^\alpha .
\]
More generally, the same method gives comparisons for  Estrada-type sums, resolvent energies,
and thresholded tails.

The third application concerns line graphs.  We use the standard spectral
relation
\[
        \mathcal E_p^+(\mathcal L(G))
        =
        \sum_i\bigl(\lambda_i(Q(G))-2\bigr)_+^p.
\]
To prove the desired
line-graph inequality, we establish the following edge-count second-order
stop-loss comparison:
\[
        \sum_i\bigl(\lambda_i(Q(G))-t\bigr)_+^2
        \ge
        \sum_i\bigl(\lambda_i(Q(P_{m+1}))-t\bigr)_+^2
        \qquad(t\ge2),
\]
for every connected graph \(G\) with \(m\) edges.  Integrating this comparison
over the threshold gives
\[
        \mathcal E_p^+(\mathcal L(G))\ge \mathcal E_p^+(P_m)
        \qquad(p\ge2).
\]
This is the technically most independent part of the paper; its proof uses
rank-one spectral-shift estimates, a deletion-minimal counterexample argument,
and exact finite checks.

The paper is organized as follows.  Section~\ref{sec:inputs} recalls the
comparison inputs from the companion paper and collects the spectral-shift
notation used later.  Section~\ref{sec:applications} proves the applications
to positive \(p\)-energies, Laplacian-type spectra, and line graphs.
Appendix~\ref{app:linegraph-scalar-check} records the exact rational
certificate checks used in the line-graph argument.

\section{Preliminaries and comparison inputs}
\label{sec:inputs}

We first recall the comparison results from the companion paper that will be
used below.  The remainder of the section collects elementary spectral-shift
notation and estimates needed in the line-graph argument.

\begin{theorem}[{\cite[Theorem~1.3]{LiuTangCore}}]
\label{thm:pgeq2}
Let \(G\) be a connected graph on \(n\) vertices.  Then, for every real number
\(p\ge2\),
\[
        \mathcal E_p(G)\ge \mathcal E_p(P_n).
\]
Moreover, for each fixed \(p>2\), equality holds if and only if \(G\cong P_n\).
\end{theorem}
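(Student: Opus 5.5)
This theorem is imported from the companion paper, but here is how I would prove it. Set $\mu_i=\lambda_i(G)^2$ and $\nu_i=\lambda_i(P_n)^2$, and write $\mathcal E_p(G)=\sum_i \mu_i^{p/2}$ with $q=p/2\ge1$. The aim is to derive the inequality from a majorization-type comparison of these squared spectra. The key tool is the second-order stop-loss functional
\[
S_G(t):=\sum_i(\mu_i-t)_+,
\]
and more generally $\sum_i(|\lambda_i|-s)_+^2$. Any function $f(x)=x^{q}$ with $q\ge1$ can be written as a mixture of the hinges $(x-t)_+$ with nonnegative weight $f''(t)\,dt$, plus a linear term. The linear term is $\sum_i\mu_i=2|E(G)|\ge 2(n-1)=\sum_i\nu_i$, and this holds for every connected graph. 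So it suffices to prove
\[
S_G(t)\ge S_{P_n}(t)\qquad\text{for all } t\ge0,
\]
and then integrate against $f''\ge0$.

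\textbf{Reduction to spanning trees.} I would first reduce to trees. If $H$ is obtained from $G$ by deleting an edge, then $A(G)^2$ does not simply dominate $A(H)^2$, so a naive interlacing argument fails. However, $S_G(t)=\max\{\tr(A^2P)-t\,\tr P : P \text{ an orthogonal projection}\}$, which is a Ky Fan type variational formula. I would try to show that deleting a non-bridge edge does not increase $S$. A cleaner route is to handle bipartite graphs directly: for bipartite $G$ with biadjacency matrix $B$, the values $\mu_i$ are the squared singular values of $B$ (each counted twice). Adding an edge adds a column or row entry, and the singular values of $B$ then weakly increase in the majorization sense: for an augmented matrix $[B\ b]$, the Gram matrix $BB^T$ increases by the positive semidefinite matrix $bb^T$. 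Edge addition within a fixed bipartition is therefore monotone, which reduces the bipartite case to spanning trees. For non-bipartite $G$, I would pass to the bipartite double cover $G\times K_2$, whose spectrum is $\pm\lambda_i(G)$, so that $\mathcal E_p(G\times K_2)=2\mathcal E_p(G)$. The trouble is that the double cover has $2n$ vertices, so the comparison would have to be with $P_n\sqcup P_n$ rather than $P_{2n}$. The double cover contains a spanning forest of two copies of a spanning tree of $G$, and by Gram monotonicity it dominates $2T$ for any spanning tree $T$. This reduces everything to trees.

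\textbf{Trees.} The core of the argument is to show that $S_T(t)\ge S_{P_n}(t)$ for every tree $T$ on $n$ vertices. I would proceed by induction on the number of vertices of degree at least $3$, using a path-straightening move: take a branch vertex and detach a pendant path, then reattach it to the end of another pendant path, in the style of Li--Shi--Gutman graph transformations for energy. I would then show that this move decreases $S(t)$ for every $t$. Via characteristic polynomials, the move produces $\phi(T)-\phi(T')=$ (a product of path polynomials). The sign control that works for the full energy via the Coulson integral has to be replaced here by a stop-loss statement, and this is where I expect the main obstacle to lie, because stop-loss dominance is much stronger than the quasi-order given by matching numbers. What I would try first is to express $S(t)$ via a contour or Coulson-type integral of $\log$ of the matching polynomial, restricted to the region $|x|>\sqrt t$, and show that the difference has constant sign using the interlacing of path polynomials.

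\textbf{Equality.} For $p>2$ the function $x^{q}$ is strictly convex, so equality forces $S_G\equiv S_{P_n}$ together with equal edge count, hence $|E|=n-1$ and $G$ is a tree. Strictness of each straightening move then forces $T\cong P_n$.
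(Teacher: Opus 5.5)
\textbf{There is a genuine gap.} Your reduction is logically valid, but the statement you reduce to is false. Your functional $S_G(t)=\sum_i(\mu_i-t)_+$ is \emph{first} order in $\mu_i=\lambda_i(G)^2$, not second order. Requiring $S_G\ge S_{P_n}$ for all $t\ge0$ is the same as requiring the squared spectrum of $G$ to weakly majorize that of $P_n$, and this already fails for trees on six vertices.

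Let $T$ be $P_5$ with a pendant vertex attached to its middle vertex. Counting matchings gives
\[
\phi(T,x)=x^6-5x^4+5x^2-1=(x^2-1)(x^4-4x^2+1),
\]
so the squared eigenvalues of $T$ are $2+\sqrt3,\ 1,\ 2-\sqrt3$, each twice. Those of $P_6$ are $4\cos^2(k\pi/7)$ for $k=1,2,3$, each twice. At $t=1$,
\[
S_T(1)=2\bigl(1+\sqrt3\bigr),\qquad S_{P_6}(1)=2\bigl(4\cos^2\tfrac{\pi}{7}+4\cos^2\tfrac{2\pi}{7}-2\bigr)=2\bigl(1+2\cos\tfrac{\pi}{7}\bigr).
\]
Since $\sqrt3=2\cos\frac{\pi}{6}<2\cos\frac{\pi}{7}$, we get $S_T(1)<S_{P_6}(1)$. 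Consequences for your plan:
\begin{itemize}
\item No sequence of straightening moves can decrease $S(t)$ for every $t$. Moving the pendant leaf of $T$ to an end of the path produces $P_6$ in one step and \emph{increases} $S(1)$.
\item No Coulson-type sign argument for $S$ can exist.
\item Your equality argument rests on the same false comparison, so it also falls.
\end{itemize}
The theorem itself is not contradicted. At the second-order level, $\mathsf S_1(T)=(1+\sqrt3)^2\approx7.46$, while $\mathsf S_1(P_6)=(1+2\cos\frac{2\pi}{7})^2+(1+2\cos\frac{4\pi}{7})^2\approx5.36$.

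\textbf{What the paper relies on instead.} The cited input is the genuinely second-order comparison $\mathsf S_t(G)=\sum_i(\mu_i-t)_+^2\ge\mathsf S_t(P_n)$ (Theorem~\ref{thm:bipartite-stoploss}). It is weaker than your claim and only controls test functions with $F'''\ge0$ (Corollary~\ref{cor:connected-third-order-convex}). The inputs quoted in Section~\ref{sec:inputs} indicate it is proved by a deletion-minimal argument, not by tree transformations. Adding a vertex appends a row $b^\top$ to the biadjacency matrix, which gives the rank-one update $B^\top B\rightsquigarrow B^\top B+bb^\top$. Its cost $2J_E(t)$ is then compared with the path endpoint increment via Lemmas~\ref{lem:rank-one-trace-formula}, \ref{lem:spectral-shift-packing}, \ref{lem:path-moments} and~\ref{lem:path-moment-envelope}, with even cycles as the terminal case (Lemma~\ref{lem:cycle-pair-ineq}).

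\textbf{Your Gram-monotonicity step.} This is exactly that vertex-addition update, but you apply it to \emph{edge} addition, where it fails. Changing a zero entry $(i,j)$ of $B$ to $1$ changes $BB^\top$ by $c_je_i^\top+e_ic_j^\top+e_ie_i^\top$, where $c_j$ is the old $j$-th column. This matrix is indefinite whenever $c_j\ne0$. Likewise $A(G)^2-A(T)^2$ is not positive semidefinite in general. So neither your bipartite reduction to spanning trees nor the double-cover step is justified.

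\textbf{A caution for any repair.} A second-order comparison in $\mu$ covers $x^{p/2}$ only for $p\ge4$ (and trivially $p=2$), because $x^{p/2}$ has negative third derivative when $2<p<4$. That range needs an additional input. For instance, a comparison of $\sum_i(|\lambda_i|-s)_+^2$ for all $s\ge0$ would suffice, via
\[
|\lambda|^p=\tfrac{p(p-1)(p-2)}{2}\int_0^\infty(|\lambda|-s)_+^2s^{p-3}\,ds .
\]
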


For a bipartite graph \(G\), let \(\mu_i(G)\) denote the squared singular values
of a biadjacency matrix of \(G\).  For \(t\ge0\), define
\[
        \mathsf S_t(G):=\sum_i(\mu_i(G)-t)_+^2 .
\]
The sum may be taken over positive squared singular values only, or with
additional zeroes; this makes no difference for \(t\ge0\).

\begin{definition}[Admissible third-order convex functions, {\cite[Definition~4.2]{LiuTangCore}}]
A function \(F:[0,\infty)\to\mathbb R\) is called
\emph{admissible third-order convex} if
\[
        F\in C^2([0,\infty)),\qquad
        F''\in AC_{\mathrm{loc}}([0,\infty)),
\]
where \(AC_{\mathrm{loc}}([0,\infty))\) means absolute continuity on every compact subinterval of \([0,\infty)\), and
\[
        F(0)=0,\qquad F'(0)\ge0,\qquad F''(0)\ge0,
\]
while the a.e. derivative \(F'''\) satisfies
\[
        F'''(t)\ge0
        \qquad\text{for a.e. }t>0.
\]
\end{definition}

\begin{theorem}[{\cite[Theorem~4.5]{LiuTangCore}}]
\label{thm:bipartite-stoploss}
Let \(G\) be a connected bipartite graph on \(n\) vertices. Then
\[
        \mathsf S_t(G)\ge \mathsf S_t(P_n)
        \qquad(t\ge0).
\]
\end{theorem}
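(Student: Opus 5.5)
The plan is to argue in two stages: first reduce from an arbitrary connected bipartite graph to a spanning tree, then show that among trees on \(n\) vertices the path minimizes \(\mathsf S_t\) for every \(t\ge0\). Throughout we work with \(M=BB^{\mathsf T}\) for a biadjacency matrix \(B\), so that \(\mathsf S_t(G)=\tr f_t(M)\) with \(f_t(x)=(x-t)_+^2\). Since \(f_t\) is convex and increasing on \([0,\infty)\), the standard tool is weak supermajorization: to prove \(\mathsf S_t(G)\ge\mathsf S_t(H)\) for all \(t\ge0\) simultaneously, it suffices to show that the ordered vector \(\mu(G)\) weakly submajorizes \(\mu(H)\) in the sense of stop-loss order. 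More precisely, we need \(\sum_i(\mu_i(G)-t)_+^2\ge\sum_i(\mu_i(H)-t)_+^2\), which follows for instance from partial-sum dominance \(\sum_{i\le k}\mu_i(G)\ge\sum_{i\le k}\mu_i(H)\) for all \(k\).

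For the reduction to trees, fix a spanning tree \(T\subseteq G\) and add the remaining edges one at a time. Adding an edge \(uv\) (with \(u\) on the left side) replaces the rank-one summand \(b_ub_u^{\mathsf T}\) of \(B^{\mathsf T}B=\sum_u b_ub_u^{\mathsf T}\) by \((b_u+e_v)(b_u+e_v)^{\mathsf T}\). This is not a positive semidefinite increase, so Loewner monotonicity fails. Instead I would use that the Ky Fan sums \(\sum_{i\le k}\mu_i=\max_{\dim W=k}\tr(P_W B^{\mathsf T}BP_W)\) are realized at the top eigenspace, which for a nonnegative irreducible matrix can be chosen to contain a Perron vector with positive entries. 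From this I would aim to show that the top Ky Fan sums do not decrease when an entry of \(B\) changes from \(0\) to \(1\). If only \(k=1\) is clean, I would fall back to comparing the traces \(\tr(M^j)\), which are entrywise monotone, and use them to control \(\mathsf S_t\) for large \(t\). I would then treat small \(t\) separately, using the identity \(\mathsf S_0=\tr M^2=\sum_{\text{edges}}\bigl(\deg u+\deg v-1\bigr)+\cdots\), which is monotone in the edge set.

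For trees, I would use an induction on \(n\) by deleting a leaf. Let \(T\) have a leaf \(\ell\) adjacent to \(w\). Then \(M(T)\) and \(M(T-\ell)\) are related by a bordered or rank-one modification, so the spectra interlace. The comparison with \(P_n\) versus \(P_{n-1}\) then reduces to showing that the increment \(\mathsf S_t(T)-\mathsf S_t(T-\ell)\) is at least the corresponding increment for the path. By interlacing, this increment is a sum of terms \(f_t(\mu_i)-f_t(\mu_i')\) with \(\mu_i\ge\mu_i'\). It is therefore minimized when the perturbation is spread as thinly as possible, which is the situation of an end leaf of a path. To make this precise I would use Kelmans/grafting moves: show that transplanting a pendant path from one branch vertex to the end of another does not increase \(\mathsf S_t\). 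Iterating these moves turns any tree into \(P_n\).

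The main obstacle is the tree step, namely proving that the grafting transformation decreases \(\mathsf S_t\) uniformly in \(t\), rather than only for the power sums \(\tr M^j\). My first attempt would be the characteristic-polynomial (matching-polynomial) comparison for trees. The standard fact that grafting toward a path decreases every matching number \(m_k(T)\) gives coefficientwise domination of \(\phi(T,x)\) over \(\phi(P_n,x)\). I would then try to convert this coefficient domination into stop-loss order of the roots \(\mu_i\) via a Coulson-type integral representation of \(f_t\). That conversion is the delicate point, and I expect it to require the third-order convexity structure of \((x-t)_+^2\) in an essential way.
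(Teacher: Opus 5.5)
\textbf{The majorization framework cannot work.} Your plan rests on partial-sum dominance (weak submajorization). As a route to $\mathsf S_t(G)\ge\mathsf S_t(P_n)$ it is unavailable, because that dominance is false even for trees. Let $G$ be $P_5$ with a pendant vertex attached at its middle vertex, so $n=6$. Its characteristic polynomial is $(x^2-1)(x^4-4x^2+1)$, so $\mu(G)=(2+\sqrt3,\,1,\,2-\sqrt3)$. On the other hand, $\mu(P_6)=\bigl(4\cos^2\tfrac{\pi}{7},\,4\cos^2\tfrac{2\pi}{7},\,4\cos^2\tfrac{3\pi}{7}\bigr)\approx(3.247,\,1.555,\,0.198)$. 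The top-two sums are $3+\sqrt3\approx4.732<4.802$. Equivalently, $\sum_i(\mu_i(G)-1)_+\approx2.732<2.802\approx\sum_i(\mu_i(P_6)-1)_+$, even though $\mathsf S_1(G)\approx7.46>5.36\approx\mathsf S_1(P_6)$. So the statement is genuinely second order (compare Remark~\ref{rem:rg-one-order-short}), and a proof has to handle the quadratic functional directly.

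\textbf{The matching-polynomial step fails for the same reason.} First, the fact you quote is reversed. Gutman's theorem says $m_k(T)\le m_k(P_n)$ for all $k$, i.e.\ the path maximizes matching numbers. With your direction and $e_1=n-1$ fixed, you would get $\sum_i\mu_i(T)^2\le\sum_i\mu_i(P_n)^2$, which is the opposite of the claim at $t=0$. Second, even in the correct direction, coefficient domination does not give stop-loss order. The vectors $x=(\tfrac32,\tfrac32,0)$ and $y=(2,\tfrac12,\tfrac12)$ have equal $e_1$ and $e_2$ and $e_3(x)<e_3(y)$, yet $\sum_i(x_i-\tfrac32)_+^2=0<\tfrac14=\sum_i(y_i-\tfrac32)_+^2$. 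Coulson-type formulas only see $\log\sum_k m_k s^k=\sum_i\log(1+s\mu_i)$, and no general conversion to $(x-t)_+^2$ exists.

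\textbf{Stage 1 and the tree step are not established.} The Perron argument handles only $k=1$. For $k\ge2$ the top $k$-dimensional eigenspace contains sign-changing vectors, so the Ky Fan lower bound $2e_u^\top P_UBe_v+\|P_Ue_u\|^2$ for the change can be negative. Comparing finitely many traces together with $\mathsf S_0$ says nothing about intermediate $t$. The leaf-increment claim (``minimized when spread as thinly as possible'') and grafting monotonicity of $\mathsf S_t$ uniformly in $t$ are the whole quantitative content of the theorem, and you leave them as heuristics.

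\textbf{How the paper's route differs.} The route behind the cited theorem is visible in the inputs of Section~\ref{sec:inputs} and is mirrored in the proof of Theorem~\ref{thm:edge-count-signless-stoploss}. It works on a minimal counterexample, directly with $\mathsf S_t$. Deleting a vertex removes a row of the biadjacency matrix, so it is a positive rank-one update of the Gram matrix. Its exact effect is $2J_{E(M,b)}(t)=2\int_0^1 b^\top(M_\theta-tI)_+b\,d\theta$ (Lemma~\ref{lem:rank-one-trace-formula}). This gain is bounded below from local data, as in the proof of Lemma~\ref{lem:lg-support-edge}. The path's endpoint gain $2J_{E_n^P}(t)$ is bounded above by the packing and moment-envelope estimates (Lemmas~\ref{lem:spectral-shift-packing}, \ref{lem:path-moments} and~\ref{lem:path-moment-envelope}). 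The terminal case is the cycle, which is settled by the explicit cosine comparison of Lemma~\ref{lem:cycle-pair-ineq}. Majorization enters only in that terminal case, where it happens to hold, and the argument uses no matching polynomials.
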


\begin{corollary}[{\cite[Corollary~4.6]{LiuTangCore}}]
\label{cor:connected-third-order-convex}
Let \(G\) be a connected graph on \(n\) vertices.  Then, for every admissible
third-order convex function \(F\),
\[
        \sum_{i=1}^n F\left(\lambda_i(G)^2\right)
        \ge
        \sum_{i=1}^n F\left(\lambda_i(P_n)^2\right).
\]
\end{corollary}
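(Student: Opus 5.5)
The plan is to reduce everything to the bipartite stop-loss comparison of Theorem~\ref{thm:bipartite-stoploss} and then integrate against $F'''$.

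\emph{Step 1 (representation of $F$).} For an admissible third-order convex $F$, Taylor's formula with integral remainder, applied using $F''\in AC_{\mathrm{loc}}$ and $F(0)=0$, gives for $x\ge0$
\[
F(x)=F'(0)\,x+\tfrac12F''(0)\,x^2+\tfrac12\int_0^\infty (x-t)_+^2\,F'''(t)\,dt .
\]
The three coefficients $F'(0)$, $F''(0)$ and $F'''(t)$ are all nonnegative. Hence it suffices to prove three things:
\begin{itemize}[leftmargin=*]
\item $\sum_i\lambda_i(G)^2\ge\sum_i\lambda_i(P_n)^2$, which is immediate since the left side is $2|E(G)|\ge2(n-1)$;
\item $\sum_i(\lambda_i(G)^2-t)_+^2\ge\sum_i(\lambda_i(P_n)^2-t)_+^2$ for every $t\ge0$;
\item the $x^2$ comparison, which is the case $t=0$ of the previous item.
\end{itemize}
The interchange of sum and integral is harmless because the sums are finite and the integrands are nonnegative.

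\emph{Step 2 (the stop-loss inequality via a bipartite graph).} If $G$ is bipartite with biadjacency matrix $B$, the nonzero eigenvalues of $A(G)$ are $\pm\sqrt{\mu_j(G)}$. Therefore $\sum_i(\lambda_i(G)^2-t)_+^2=2\mathsf S_t(G)$, and the same identity holds for $P_n$. Theorem~\ref{thm:bipartite-stoploss} then finishes this case.

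If $G$ is not bipartite, I would pass to the bipartite double cover $G\times K_2$. It has biadjacency matrix $A(G)$, so its squared singular values are exactly $\lambda_1(G)^2,\dots,\lambda_n(G)^2$. Since $G$ is connected and nonbipartite, $G\times K_2$ is connected on $2n$ vertices. Theorem~\ref{thm:bipartite-stoploss} then yields
\[
\sum_i(\lambda_i(G)^2-t)_+^2=\mathsf S_t(G\times K_2)\ge\mathsf S_t(P_{2n}).
\]

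\emph{Step 3 (comparing $P_{2n}$ with $P_n$).} This is the only genuinely new point, and the step I expect to need care. It remains to show $\mathsf S_t(P_{2n})\ge\sum_i(\lambda_i(P_n)^2-t)_+^2$. The squared singular values of $P_{2n}$ are $4\cos^2\!\bigl(\pi k/(2n+1)\bigr)$ for $k=1,\dots,n$. The multiset $\{\lambda_i(P_n)^2\}$ consists of $4\cos^2\!\bigl(\pi j/(n+1)\bigr)$ for $j=1,\dots,n$, and in decreasing order each value occurs twice, since $j$ and $n+1-j$ give the same value.

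I would compare the two lists in decreasing order, entry by entry. At position $2j-1$ or $2j$, the $P_n$ list has angle $\pi j/(n+1)$, while the $P_{2n}$ list has angle at most $\pi\cdot 2j/(2n+1)$. The latter is the smaller angle because $2j(n+1)\le j(2n+1)+j$, which is the same as $2j(n+1)\le (2n+1)j+j$, an equality; at position $2j-1$ the comparison is strict, since $(2j-1)(n+1)<j(2n+1)$ reduces to $j<n+1$. So each term of the $P_{2n}$ list dominates the corresponding term of the $P_n$ list.

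Because $x\mapsto(x-t)_+^2$ is nondecreasing, termwise domination gives the stop-loss inequality for every $t\ge0$. Combining this with Steps~1 and~2 proves the corollary. If the pointwise comparison at even positions turned out to be delicate, I would fall back on weak majorization of the two sorted lists, which suffices for the same conclusion since $(x-t)_+^2$ is convex and nondecreasing.
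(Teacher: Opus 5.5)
Your Steps 1 and 2 are sound. The Taylor representation reduces the corollary to the linear term and the stop-loss functionals, and the bipartite case is exactly Theorem~\ref{thm:bipartite-stoploss}. For non-bipartite \(G\), the double cover \(G\times K_2\) is connected on \(2n\) vertices with biadjacency matrix \(A(G)\). So everything correctly reduces to \(\mathsf S_t(P_{2n})\ge\sum_i(\lambda_i(P_n)^2-t)_+^2\).

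\textbf{The gap is in Step~3.} The termwise domination you assert is false at every even position. At position \(2j\) the relevant angles are \(2j\pi/(2n+1)\) and \(j\pi/(n+1)\). Since \(2j(n+1)-j(2n+1)=j>0\), the \(P_{2n}\) angle is strictly \emph{larger}, so its squared eigenvalue is strictly smaller. Your computation added an extra \(j\), which turned a false inequality into an identity. Already for \(n=2\) the sorted lists are \(\bigl(\tfrac{3+\sqrt5}2,\tfrac{3-\sqrt5}2\bigr)\) for \(P_4\) and \((1,1)\) for \(P_2\), and \(\tfrac{3-\sqrt5}2<1\). Monotonicity of \((x-t)_+^2\) therefore cannot close the argument.

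Your fallback to weak majorization is the right repair, but it is exactly where the content of Step~3 lies, and you have not supplied it. Your odd-position comparison is correct. Given it, weak submajorization of the two sorted lists reduces to the pair inequality
\[
\cos^2\frac{(2j-1)\pi}{2n+1}+\cos^2\frac{2j\pi}{2n+1}\ge 2\cos^2\frac{j\pi}{n+1}
\qquad(1\le j\le\lfloor n/2\rfloor).
\]
This is a gain-versus-loss estimate of the same kind as Lemma~\ref{lem:cycle-pair-ineq}. It is true, but it needs proof.

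One way to prove it: put \(p=\frac{(4j-2)\pi}{2n+1}\), \(q=\frac{2j\pi}{n+1}\), \(r=\frac{4j\pi}{2n+1}\). The inequality then reads \(\int_p^q\sin\phi\,d\phi\ge\int_q^r\sin\phi\,d\phi\), and \(q-p>r-q\) because \(n+1-j>j\).
\begin{itemize}
\item If \(\cos q\le0\), reflect \([q,r]\) about \(q\).
\item If \(\cos q>0\), then \(j<(n+1)/4\). The excess length \((q-p)-(r-q)\) exceeds \(\pi/(2n+1)\), and on it \(\sin\ge 4/(2n+1)\). This outweighs the reflection error \(2\cos q\,(1-\cos(r-q))\le (r-q)^2<\pi^2/(4(2n+1)^2)\).
\end{itemize}
With this lemma and the increasing-convex characterization of weak majorization, your proof goes through. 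As written, Step~3 does not.
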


For an interval \([a,b]\subset[0,\infty)\), write
\[
        \mathcal I_{[a,b]}(t):=\int_a^b (u-t)_+\,du.
\]
For a finite interval set \(E\subset[0,\infty)\), write
\[
        J_E(t):=\int_E(y-t)_+\,dy .
\]

\begin{lemma}[{\cite[Lemma~5.1]{LiuTangCore}}]
\label{lem:rank-one-trace-formula}
Let \(M\) be a real symmetric matrix, let \(b\) be a vector of the same
dimension, and set \(M_\theta=M+\theta bb^\top\) for \(0\le\theta\le1\). Then,
for every \(t\ge0\),
\[
        \tr(M_1-tI)_+^2-\tr(M_0-tI)_+^2
        =
        2\int_0^1 b^\top(M_\theta-tI)_+b\,d\theta .
\]
\end{lemma}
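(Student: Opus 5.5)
We prove the identity by differentiating along the linear path \(M_\theta=M+\theta bb^\top\) and integrating from \(\theta=0\) to \(\theta=1\).

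Set \(\phi(x):=(x-t)_+^2\). This function is \(C^1\) on \(\mathbb R\) with \(\phi'(x)=2(x-t)_+\), and \(\phi'\) is Lipschitz with constant \(2\). Define
\[
        f(\theta):=\tr\phi(M_\theta)=\tr(M_\theta-tI)_+^2 .
\]
The claim is exactly \(f(1)-f(0)=\int_0^1 f'(\theta)\,d\theta\), together with the identification
\[
        f'(\theta)=\tr\bigl(\phi'(M_\theta)\,bb^\top\bigr)=2\,b^\top(M_\theta-tI)_+b .
\]

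The key step is the first-order perturbation formula for trace functions: for a \(C^1\) function \(\phi\) and symmetric \(A,H\),
\[
        \frac{d}{ds}\tr\phi(A+sH)=\tr\bigl(\phi'(A+sH)H\bigr).
\]
We would establish it in three stages.

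1. For polynomials \(\phi\), the formula follows from the cyclicity of the trace. For \(\phi(x)=x^k\), the derivative of \(\tr(A+sH)^k\) is a sum of \(k\) terms \(\tr\bigl((A+sH)^{j}H(A+sH)^{k-1-j}\bigr)\), and each equals \(\tr\bigl((A+sH)^{k-1}H\bigr)\).
2. For general \(\phi\in C^1\), restrict to a compact interval containing all the spectra of \(M_\theta\), \(\theta\in[0,1\)]. These spectra are uniformly bounded, for instance by \(\norm{M}+\norm{b}^2\).
3. On that interval, approximate \(\phi\) by polynomials \(p_k\) with \(p_k\to\phi\) and \(p_k'\to\phi'\) uniformly, for example by integrating Weierstrass approximants of \(\phi'\). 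Then \(\tr p_k(M_\theta)\to f(\theta)\) uniformly in \(\theta\), and the derivatives \(\tr\bigl(p_k'(M_\theta)bb^\top\bigr)\) converge uniformly to \(\tr\bigl(\phi'(M_\theta)bb^\top\bigr)\). Uniform convergence of the derivatives lets us pass the derivative to the limit.

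This shows that \(f\) is \(C^1\) on \([0,1]\). Its derivative is continuous because \(\theta\mapsto\phi'(M_\theta)\) is continuous; this is standard for a continuous function of a continuously varying symmetric matrix, and in our case the map is even Lipschitz.

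With the derivative identified, the fundamental theorem of calculus gives the stated formula. The main obstacle is that \(\phi\) is only \(C^1\): \(\phi''\) jumps at \(x=t\), so eigenvalue-by-eigenvalue differentiation fails when eigenvalues cross \(t\) or collide. The polynomial-approximation route above handles this without tracking individual eigenvalues.

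A quick alternative check uses the eigen-decomposition \(M_\theta=\sum_i\lambda_i(\theta)u_iu_i^\top\). At points where the eigenvalues are simple, Hellmann--Feynman gives \(\lambda_i'=(u_i^\top b)^2\), so
\[
        f'(\theta)=\sum_i 2(\lambda_i-t)_+(u_i^\top b)^2=2\,b^\top(M_\theta-tI)_+b .
\]
The remaining \(\theta\) are handled by continuity, since the eigenvalues are Lipschitz in \(\theta\) and \(f\) is therefore absolutely continuous.
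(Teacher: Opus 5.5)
Your main argument is correct, and it is the standard proof of this identity; the present paper does not reprove the lemma but imports it from the companion paper. With $\phi(x)=(x-t)_+^2\in C^1$, the trace-derivative formula $\frac{d}{d\theta}\tr\phi(M_\theta)=b^\top\phi'(M_\theta)b=2\,b^\top(M_\theta-tI)_+b$ (justified by your polynomial approximation of $\phi'$ on an interval containing all spectra) integrates directly to the claim.

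Your Hellmann--Feynman side check is unnecessary and not valid as written, because $M_\theta$ may have no $\theta$ with simple spectrum. This happens, for example, when $M$ has an eigenvalue of multiplicity at least $3$, since at least two copies of it persist on $b^\perp$ for every $\theta$. Repairing it would need analytic eigenbranches (Rellich), or a reduction to the Krylov space of $b$, on which $M_\theta$ has simple spectrum.
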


For real symmetric matrices \(A\) and \(B\) of the same size, we write \(A\succeq B\) if \(A-B\) is positive semidefinite. Let \(M\succeq0\) and \(b\ne0\), and consider the positive rank-one update
\(M\rightsquigarrow M+bb^\top\).  Let
\[
        \alpha_1\ge\alpha_2\ge\cdots\ge0,
        \qquad
        \beta_1\ge\beta_2\ge\cdots\ge0
\]
be the eigenvalues of \(M\) and \(M+bb^\top\), respectively, with zero padding
if necessary.  Rank-one interlacing gives
\[
        \beta_1\ge \alpha_1\ge \beta_2\ge \alpha_2\ge\cdots\ge0 .
\]
We define the associated spectral-shift interval set by
\[
        E(M,b):=\bigcup_i[\alpha_i,\beta_i],
\]
where zero-length intervals and endpoint overlaps are harmless.  Thus
\(E(M,b)\) records the interlacing gaps created by the positive rank-one
update.

For a finite interval set \(E\subset[0,\infty)\), recall that
\[
        J_E(t)=\int_E (y-t)_+\,dy,
        \qquad t\ge0.
\]
When \(E=E(M,b)\), this quantity is half of the increase of the second-order stop-loss functional:
\[
        \tr(M+bb^\top-tI)_+^2-\tr(M-tI)_+^2
        =
        2J_{E(M,b)}(t).
\]
If \(M_\theta=M+\theta bb^\top\), then combining this identity with Lemma~\ref{lem:rank-one-trace-formula} gives
\[
        J_{E(M,b)}(t)=\int_0^1 b^\top(M_\theta-tI)_+b\,d\theta .
\]

\begin{lemma}[{\cite[Lemma~5.4]{LiuTangCore}}]
\label{lem:spectral-shift-packing}
Let \(E\subset[0,4]\) be a finite disjoint union of intervals, and let
\(\ell=|E|\) be its total length. Then, for every \(t\ge0\),
\[
        J_E(t)\le \mathcal I_{[4-\ell,4]}(t).
\]
\end{lemma}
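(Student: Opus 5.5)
The inequality is a one-dimensional rearrangement (bathtub) statement. For fixed \(t\ge0\), the weight \(\varphi(y):=(y-t)_+\) is nondecreasing in \(y\). Among all sets in \([0,4]\) of a given length, the integral of a nondecreasing weight should be largest on the rightmost interval of that length. So I would compare \(E\) with \([4-\ell,4]\) directly through their indicator functions, with no spectral input needed.

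\emph{Setup.} Since \(E\subset[0,4]\) is a finite disjoint union of intervals, its total length satisfies \(\ell=|E|\le 4\). Hence \([4-\ell,4]\subset[0,4]\) is well defined. Endpoint overlaps have measure zero and do not affect any of the integrals. Write
\[
J_E(t)=\int_0^4 \mathbf 1_E(y)\,\varphi(y)\,dy,
\qquad
\mathcal I_{[4-\ell,4]}(t)=\int_0^4 \mathbf 1_{[4-\ell,4]}(y)\,\varphi(y)\,dy .
\]

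\emph{Comparison step.} Set \(h:=\mathbf 1_{[4-\ell,4]}-\mathbf 1_E\). Then \(h\) has three properties:
\begin{itemize}
\item \(h\ge0\) on \([4-\ell,4]\);
\item \(h\le0\) on \([0,4-\ell)\);
\item \(\int_0^4 h\,dy=\ell-|E|=0\).
\end{itemize}
Put \(c:=\varphi(4-\ell)\). Monotonicity of \(\varphi\) gives \(\varphi\ge c\) where \(h\ge0\), and \(\varphi\le c\) where \(h\le 0\). In both cases \(h(y)\varphi(y)\ge c\,h(y)\) pointwise. Integrating,
\[
\mathcal I_{[4-\ell,4]}(t)-J_E(t)=\int_0^4 h\,\varphi\,dy\ \ge\ c\int_0^4 h\,dy=0,
\]
which is the claimed inequality.

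The only point needing care is the bookkeeping. First, one must confirm that \(\ell\le 4\), so the comparison interval lies inside \([0,4]\). Second, \(|E|\) must equal the sum of the interval lengths; this uses disjointness, with endpoint overlaps contributing measure zero. I do not expect a genuine obstacle. The argument is the standard bathtub principle, and it holds uniformly in \(t\ge0\) because \(\varphi\) is nondecreasing for every such \(t\).
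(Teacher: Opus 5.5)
Your proof is correct. It is the standard bathtub (rearrangement) argument, with one cosmetic slip: ``$\varphi\ge c$ where $h\ge0$'' and ``$\varphi\le c$ where $h\le0$'' should be read on $[4-\ell,4]$ and $[0,4-\ell)$ respectively, and on the set $\{h=0\}$ the pointwise bound $h\varphi\ge ch$ holds trivially. This paper only imports the lemma from the companion paper without reproducing a proof, so there is nothing in it to compare against; your argument, or equivalently the layer-cake form $J_E(t)=\int_t^\infty|E\cap[s,\infty)|\,ds\le\int_t^\infty\min\{\ell,(4-s)_+\}\,ds=\mathcal I_{[4-\ell,4]}(t)$, is the natural proof of it.
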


For the path endpoint increment \(P_{n-1}\rightsquigarrow P_n\), label the
vertices as \(1,\ldots,n\), so that \(P_{n-1}\) is the path
\(1-\cdots-(n-1)\) and the new edge is \((n-1)n\).  Choose the bipartition so
that the new endpoint \(n\) lies on the left side; then the old endpoint
\(n-1\) lies on the right side.  Let \(B_{P_{n-1}}\) be the biadjacency matrix
of \(P_{n-1}\) with this bipartition, and put
\[
        M_0:=B_{P_{n-1}}^\top B_{P_{n-1}}.
\]
Let \(e\) be the standard basis vector corresponding to the right-side vertex
\(n-1\).  Adding the new endpoint appends the row \(e^\top\) to the
biadjacency matrix, and therefore $B_{P_n}^\top B_{P_n}=M_0+ee^\top$. We write 
\[
        E_n^P:=E(M_0,e)
\]
for the corresponding spectral-shift interval set.

\begin{lemma}[{\cite[Lemma~6.3]{LiuTangCore}}]\label{lem:path-moments}
For every \(n\ge2\), let \(E_n^P\) be the path endpoint spectral-shift interval
set for \(P_{n-1}\rightsquigarrow P_n\). Then, for every integer \(m\ge1\),
\[
        \int_{E_n^P} y^{m-1}\,dy
        \le
        \frac1{2m}\binom{2m}{m}.
\]
\end{lemma}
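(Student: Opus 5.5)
We will convert the integral into a difference of traces and then into a count of closed walks. Since $E_n^P=\bigcup_i[\alpha_i,\beta_i]$, where $\alpha_i$ and $\beta_i$ are the interlaced eigenvalues of $M_0$ and $M_0+ee^\top=B_{P_n}^\top B_{P_n}$, we have
\[
        \int_{E_n^P}y^{m-1}\,dy=\frac1m\sum_i(\beta_i^m-\alpha_i^m)
        =\frac1m\Bigl(\tr(B_{P_n}^\top B_{P_n})^m-\tr(B_{P_{n-1}}^\top B_{P_{n-1}})^m\Bigr).
\]
For any bipartite graph $H$ with biadjacency matrix $B$, we have $\tr(B^\top B)^m=\tr(BB^\top)^m$. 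Also $\tr A(H)^{2m}=\tr(BB^\top)^m+\tr(B^\top B)^m$. Hence $\tr(B^\top B)^m=\tfrac12 W_{2m}(H)$, where $W_{2m}(H)=\tr A(H)^{2m}$ is the number of closed walks of length $2m$ in $H$. The claim is therefore equivalent to
\[
        \Delta_n:=W_{2m}(P_n)-W_{2m}(P_{n-1})\le\binom{2m}{m}\qquad(n\ge2).
\]
Here $W_{2m}(P_1)=0$, since $m\ge1$.

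\textbf{Step 1: $\Delta_n$ is nondecreasing in $n$.} Label $P_{n+1}$ as $1,\dots,n+1$. Let $\mathcal A$ be the set of closed $2m$-walks avoiding vertex $n+1$, and $\mathcal B$ the set of those avoiding vertex $1$. Each of these is a copy of the walks in $P_n$, and $\mathcal A\cap\mathcal B$ is the set of walks in the path $2,\dots,n$, a copy of $P_{n-1}$. Inclusion--exclusion then gives
\[
        W_{2m}(P_{n+1})\ge|\mathcal A\cup\mathcal B|=2W_{2m}(P_n)-W_{2m}(P_{n-1}),
\]
that is, $\Delta_{n+1}\ge\Delta_n$. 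For $n=2$ we interpret $P_{n-1}=P_1$ with no walks, and the same inequality holds.

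\textbf{Step 2: identify the limit.} Group the closed walks by their starting vertex $v$. A closed walk of length $2m$ never moves farther than $m$ from its start. So if $v$ has distance greater than $m$ from both endpoints, the walks from $v$ are exactly the closed walks of $\mathbb Z$ from $0$, and there are $\binom{2m}{m}$ of them. Take $n$ large (say $n\ge 2m+3$) and pass from $P_n$ to $P_{n+1}$ by inserting a vertex in the middle. This leaves the boundary contributions unchanged, and the only effect is one extra interior vertex. Hence $\Delta_{n+1}=\binom{2m}{m}$ for all large $n$. Combined with the monotonicity from Step 1, this gives $\Delta_n\le\binom{2m}{m}$ for every $n\ge2$. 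Dividing by $2m$ yields the stated bound.

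The main point requiring care is the trace/walk bookkeeping: checking that the right-rooted trace is exactly half of $\tr A^{2m}$ for both $P_n$ and $P_{n-1}$, independently of which side the new endpoint sits on. Making the stabilization in Step 2 precise by "inserting a middle vertex" is routine.
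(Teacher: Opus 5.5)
Your proof is correct. The reduction to $\Delta_n=W_{2m}(P_n)-W_{2m}(P_{n-1})\le\binom{2m}{m}$, using $\tr(B^\top B)^m=\tfrac12\tr A^{2m}$, is the same trace-to-walk bookkeeping the paper uses; the proof of Lemma~\ref{lem:lg-path-endpoint-envelope} reuses it ``as in the proof of Lemma~\ref{lem:path-moments}''. Where you differ is in how you bound $\Delta_n$.

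The paper notes that $\Delta_n$ counts the closed $2m$-walks in $P_n$ that visit the new endpoint $n$. Such a walk cannot go past $n$, so $n$ is its highest vertex and its starting vertex is $n$ minus the maximal partial sum of its step sequence. The step-sequence map is therefore injective into the $\binom{2m}{m}$ balanced $\{\pm1\}$-sequences, which gives the bound in one line. You instead prove that $n\mapsto\Delta_n$ is nondecreasing, by inclusion--exclusion on the two end-deleted copies of $P_n$ inside $P_{n+1}$. You then identify the eventual value $\binom{2m}{m}$ by counting walks rooted far from both ends.

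Your route gives slightly more: monotonicity in $n$, and sharpness of the constant for large $n$. The injection is shorter and needs no limiting argument. Both adapt to the two-endpoint increment $P_{2k-1}\rightsquigarrow P_{2k+1}$ used later, since that increment is a sum of two consecutive $\Delta$'s. The paper handles it by a union bound over the two endpoints; you would handle it by applying your bound to each of the two $\Delta$'s.
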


\begin{lemma}[{\cite[Lemma~6.6]{LiuTangCore}}]\label{lem:path-moment-envelope}
Let \(E\subset[0,4]\) be a finite union of intervals. For every \(r>1\) and
every \(0<t<4\),
\[
        J_E(t)
        \le
        \Gamma_r(t)\int_E y^r\,dy,
\]
where
\[
        \Gamma_r(t):=
        \max_{t<y\le4}\frac{y-t}{y^r}
        =
        \begin{cases}
        \dfrac{(r-1)^{r-1}}{r^r t^{r-1}},
        &0<t\le \dfrac{4(r-1)}r,\\[3mm]
        \dfrac{4-t}{4^r},
        &\dfrac{4(r-1)}r\le t<4.
        \end{cases}
\]
For \(r=1\), one has \(J_E(t)\le\int_E y\,dy\) for \(t\ge0\).
\end{lemma}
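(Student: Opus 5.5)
The plan is to prove a pointwise bound on the integrand and then integrate it over \(E\); no structure of \(E\) beyond \(E\subset[0,4]\) is needed. Fix \(r>1\) and \(0<t<4\). We claim that
\[
        (y-t)_+\le \Gamma_r(t)\,y^r\qquad\text{for every }y\in[0,4].
\]
For \(0\le y\le t\) the left side is \(0\) and the right side is nonnegative, since \(\Gamma_r(t)\ge0\): the quotient \((y-t)/y^r\) is positive for \(t<y\le4\). For \(t<y\le4\) we have \(y>0\), and the bound is exactly the definition of \(\Gamma_r(t)\) as a maximum. Integrating over \(E\) then gives \(J_E(t)=\int_E(y-t)_+\,dy\le\Gamma_r(t)\int_E y^r\,dy\).

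It remains to evaluate the maximum explicitly. Set \(f(y)=(y-t)y^{-r}\) on \((t,4]\). Differentiating gives
\[
        f'(y)=y^{-r-1}\bigl(rt-(r-1)y\bigr),
\]
so \(f\) increases up to the unique critical point \(y^*=rt/(r-1)>t\) and decreases after it.

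\emph{Case \(t\le 4(r-1)/r\).} Here \(y^*\le4\), so the maximum over \((t,4]\) is attained at \(y^*\). Since \(y^*-t=t/(r-1)\), we get
\[
        f(y^*)=\frac{t/(r-1)}{\bigl(rt/(r-1)\bigr)^r}=\frac{(r-1)^{r-1}}{r^r\,t^{r-1}}.
\]

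\emph{Case \(t\ge 4(r-1)/r\).} Here \(y^*\ge4\), so \(f\) is nondecreasing on \((t,4]\) and the maximum is \(f(4)=(4-t)/4^r\).

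At \(t=4(r-1)/r\) both formulas coincide, since \(y^*=4\) there. Hence the two-case formula for \(\Gamma_r(t)\) is consistent.

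For \(r=1\), the pointwise bound \((y-t)_+\le y\) holds for all \(y\ge0\) and \(t\ge0\), and integrating it over \(E\) gives \(J_E(t)\le\int_E y\,dy\).

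No step is a real obstacle. The only point needing care is the case split according to whether the critical point \(rt/(r-1)\) lies inside the window \((t,4]\).
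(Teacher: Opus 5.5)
Your proof is correct and follows what is essentially the intended route. The present paper only cites this lemma from the companion paper, and the lemma's content is exactly the pointwise domination \((y-t)_+\le\Gamma_r(t)\,y^r\) on \([0,4]\) integrated over \(E\), with the closed form of \(\Gamma_r\) read off from the sign of \(f'(y)=y^{-r-1}\bigl(rt-(r-1)y\bigr)\). I checked your derivative, the critical point \(rt/(r-1)\), both case values, and the \(r=1\) bound \((y-t)_+\le y\), and they are all correct.
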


\begin{lemma}[{\cite[Lemma~4.15]{LiuTangCore}}]\label{lem:cycle-pair-ineq}
Let \(m\ge2\) and \(1\le h\le\lfloor m/2\rfloor\). Then
\[
\cos^2\frac{(h-1)\pi}{m}
+
\cos^2\frac{h\pi}{m}
\ge
\cos^2\frac{(2h-1)\pi}{2m+1}
+
\cos^2\frac{2h\pi}{2m+1}.
\]
\end{lemma}

\section{Applications to positive \texorpdfstring{$p$}{p}-energies, Laplacian-type spectra, and line graphs}
\label{sec:applications}

We now prove the announced applications.  The first two subsections use the
path-minimality and bipartite stop-loss comparisons from the companion paper
rather directly.  The final subsection proves an additional edge-count
comparison for the signless Laplacian, which is then converted into a sharp
positive \(p\)-energy bound for line graphs.

\subsection{Applications to positive \texorpdfstring{$p$}{p}-energies}

Positive and negative $p$-energies appear naturally in several spectral graph-theoretic problems.  For example, Elphick, Tang and Zhang used positive and negative $p$-energies to obtain lower bounds for the chromatic number, fractional chromatic number, quantum chromatic number, orthogonal rank, and projective rank; their results show that non-integer values of $p$ can sometimes give sharper spectral coloring bounds than the classical choices~\cite{ElphickTangZhang2026}.

For a graph $G$, let
\[
        \mathcal E_p^+(G)=\sum_{\lambda_i(G)>0}\lambda_i(G)^p,
        \qquad
        \mathcal E_p^-(G)=\sum_{\lambda_i(G)<0}|\lambda_i(G)|^p
\]
be its \emph{positive} and \emph{negative adjacency $p$-energies}.  When $p=2$, these quantities are the positive and negative square energies of $G$.  The positive and negative square energies had appeared earlier in the literature, but they were first studied systematically by Elphick, Farber, Goldberg and Wocjan, who formulated influential conjectural lower bounds for the sums of squares of positive and negative eigenvalues~\cite{ElphickFarberGoldbergWocjan2016}.  For recent progress on these square-energy problems, see the works of Zhang~\cite{Zhang2024SquareEnergies}, Akbari, Kumar, Mohar and Pragada~\cite{AkbariKumarMoharPragada2025SquareEnergy}, and Akbari, Kumar, Mohar, Pragada and Zhang~\cite{AkbariKumarMoharPragadaZhang2025Refinement}.

Tang, Liu and Wang introduced positive and negative $p$-energies in their study of edge addition and proposed the following positive $p$-energy conjecture~\cite{TangLiuWang2026}.

\begin{conjecture}[{\cite[Conjecture~4.6]{TangLiuWang2026}}]\label{conj:positive-p-energy}
Let $p\ge2$ be a real number, and let $G$ be a connected graph with $n$ vertices.  Then
\[
        \mathcal E_p^+(G)\ge \mathcal E_p^+(P_n).
\]
\end{conjecture}

Conjecture~\ref{conj:positive-p-energy}, together with the related problems
for positive and negative \(p\)-energies, has attracted recent attention.
Akbari, Kumar, Mohar and Pragada proved a general superadditivity theorem for
the positive and negative \(p\)-energies of Hermitian block matrices for every
\(p\ge1\)~\cite{AKMP2025}.  They also proposed the corresponding negative
\(p\)-energy conjecture, asserting that
\[
        \mathcal E_p^-(G)\ge \mathcal E_p^-(K_n)
\]
for every connected graph \(G\) on \(n\) vertices and every \(p\ge2\), and
proved this conjecture for \(p\ge4\)~\cite{AKMP2025}. In the positive direction, they obtained a linear lower bound for
\(\mathcal E_p^+(G)\) when \(p\ge4\)~\cite[Theorem~2]{AKMP2025}. Recently, Chen, Wang and Zhang proved the negative
\(p\)-energy conjecture for every integer \(p\ge3\) and obtained a linear lower
bound for the positive \(3\)-energy~\cite{ChenWangZhang2026}.

For clarity, none of the preceding positive-energy results gives the exact
path lower bound asserted in Conjecture~\ref{conj:positive-p-energy}.  The
positive-energy lower bound of Akbari, Kumar, Mohar and Pragada has the correct
linear order of magnitude for \(p\ge4\), but it is not the sharp quantity
\(\mathcal E_p^+(P_n)\).  Similarly, the positive-energy result of Chen, Wang
and Zhang gives a linear lower bound for \(\mathcal E_3^+(G)\), rather than the
exact path-minimality inequality $\mathcal E_3^+(G)\ge \mathcal E_3^+(P_n)$. Thus the exact positive \(p\)-energy path bound remains separate from these
known linear lower bounds, even in the cases \(p=3\) and \(p\ge4\).

Our main theorem settles Conjecture~\ref{conj:positive-p-energy} for connected bipartite graphs.

\begin{theorem}\label{thm:positive-bipartite}
Let $G$ be a connected bipartite graph on $n$ vertices.  Then, for every real number $p\ge2$,
\[
        \mathcal E_p^+(G)\ge \mathcal E_p^+(P_n).
\]
\end{theorem}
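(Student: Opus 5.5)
For a bipartite graph the adjacency spectrum is symmetric about $0$, so the positive $p$-energy is exactly half of the total $p$-energy. The plan is therefore to reduce the statement to Theorem~\ref{thm:pgeq2}.

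\textbf{Step 1: spectral symmetry.} Let $G$ have bipartition $(X,Y)$ and biadjacency matrix $B$. Ordering the vertices accordingly gives
\[
A(G)=\begin{pmatrix}0&B\\ B^\top&0\end{pmatrix}.
\]
The map $(x,y)\mapsto(x,-y)$ sends a $\lambda$-eigenvector to a $(-\lambda)$-eigenvector, so the spectrum is symmetric with multiplicities. Equivalently, the nonzero eigenvalues are $\pm\sqrt{\mu_i(G)}$, where $\mu_i(G)$ ranges over the positive squared singular values of $B$, together with zero eigenvalues.

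\textbf{Step 2: halve the total energy.} Zero eigenvalues contribute nothing to either sum. Hence
\[
\mathcal E_p^+(G)=\tfrac12\,\mathcal E_p(G),
\]
and the same identity holds for $P_n$, since $P_n$ is bipartite.

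\textbf{Step 3: apply the total-energy bound.} Since $G$ is connected on $n$ vertices and $p\ge2$, Theorem~\ref{thm:pgeq2} gives $\mathcal E_p(G)\ge\mathcal E_p(P_n)$. Dividing by $2$ yields
\[
\mathcal E_p^+(G)\ge \mathcal E_p^+(P_n).
\]

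The equality characterization transfers as well: for fixed $p>2$, equality holds only when $G\cong P_n$. There is no real obstacle here; the only point to check is that the symmetry preserves multiplicities, which follows because the sign-flip map is a linear bijection between the two eigenspaces.
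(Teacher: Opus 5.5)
Your proposal is correct and follows the paper's proof essentially verbatim: spectral symmetry of bipartite graphs gives $\mathcal E_p^+(G)=\frac12\mathcal E_p(G)$ (and likewise for $P_n$), and Theorem~\ref{thm:pgeq2} finishes the argument. Your added remark on equality for fixed $p>2$ is also valid: it follows from the same halving identity combined with the equality clause of Theorem~\ref{thm:pgeq2}.
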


\begin{proof}
Since $G$ is bipartite, its nonzero adjacency eigenvalues occur in pairs $\pm\lambda$.  Hence
\[
        \mathcal E_p^+(G)=\mathcal E_p^-(G)=\frac12\mathcal E_p(G).
\]
The path $P_n$ is also bipartite, so $\mathcal E_p^+(P_n)=\frac12\mathcal E_p(P_n)$.  By Theorem~\ref{thm:pgeq2}, we have $\mathcal E_p(G)\ge\mathcal E_p(P_n)$. Therefore
\[
        \mathcal E_p^+(G)
        =
        \frac12\mathcal E_p(G)
        \ge
        \frac12\mathcal E_p(P_n)
        =
        \mathcal E_p^+(P_n).
\qedhere\]
\end{proof}

The same theorem also verifies Conjecture~\ref{conj:positive-p-energy} for every odd integer \(p\ge3\).  In particular, when \(p=3\), it gives the sharp path lower bound predicted by the conjecture,
\[
        \mathcal E_3^+(G)\ge \mathcal E_3^+(P_n),
\]
going beyond the linear lower bound for the positive \(3\)-energy obtained by Chen, Wang and Zhang~\cite[Theorem~1.8]{ChenWangZhang2026}.

\begin{theorem}\label{thm:positive-odd-integer}
Let $p\ge3$ be an odd integer.  Then, for every connected graph $G$ on $n$ vertices,
\[
        \mathcal E_p^+(G)\ge \mathcal E_p^+(P_n).
\]
\end{theorem}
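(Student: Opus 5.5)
For odd integer \(p\), the plan is to exploit that \(\lambda^p\) is an odd function, so the trace of \(A(G)^p\) splits as a difference of the positive and negative parts. Specifically,
\[
        \mathcal E_p^+(G)-\mathcal E_p^-(G)=\sum_i\lambda_i(G)^p=\tr A(G)^p=W_p(G),
\]
where \(W_p(G)\) is the number of closed walks of length \(p\) in \(G\), a nonnegative integer. Combined with
\[
        \mathcal E_p^+(G)+\mathcal E_p^-(G)=\mathcal E_p(G),
\]
this gives
\[
        \mathcal E_p^+(G)=\tfrac12\bigl(\mathcal E_p(G)+\tr A(G)^p\bigr).
\]

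Step two is to apply Theorem~\ref{thm:pgeq2}. Since \(p\ge3\ge2\), we have \(\mathcal E_p(G)\ge\mathcal E_p(P_n)\). Also \(\tr A(G)^p\ge0\), because closed walks are counted nonnegatively. Hence
\[
        \mathcal E_p^+(G)\ge\tfrac12\mathcal E_p(P_n).
\]

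Step three is to identify the right-hand side with \(\mathcal E_p^+(P_n)\). The path \(P_n\) is bipartite, so it has no closed walks of odd length and \(\tr A(P_n)^p=0\). Equivalently, as in the proof of Theorem~\ref{thm:positive-bipartite}, its spectrum is symmetric. Thus \(\mathcal E_p^+(P_n)=\tfrac12\mathcal E_p(P_n)\), which concludes the argument.

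I do not expect a real obstacle. The only point needing care is that oddness of \(p\) is used twice: to write \(\lambda^p=\operatorname{sign}(\lambda)|\lambda|^p\), so the trace equals the difference of the positive and negative energies, and to ensure the path contributes zero trace. Positivity of \(\tr A^p\) via walk counting requires \(p\) to be an integer. If one also wants the equality case, it follows from the equality clause of Theorem~\ref{thm:pgeq2}, since \(p>2\) forces \(G\cong P_n\).
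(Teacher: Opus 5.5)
Your proposal is correct and matches the paper's proof essentially step for step: the identity $2\mathcal E_p^+(G)=\mathcal E_p(G)+\tr A(G)^p$ for odd $p$, nonnegativity of $\tr A(G)^p$ by counting closed walks, Theorem~\ref{thm:pgeq2}, and $\mathcal E_p(P_n)=2\mathcal E_p^+(P_n)$ because $P_n$ is bipartite. Your extra equality remark is also valid, since equality forces $\mathcal E_p(G)=\mathcal E_p(P_n)$ with $p>2$, and hence $G\cong P_n$.
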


\begin{proof}
Let $A=A(G)$.  Since $p$ is odd,
\[
        \tr A^p
        =
        \sum_i\lambda_i(G)^p
        =
        \mathcal E_p^+(G)-\mathcal E_p^-(G).
\]
On the other hand, $\tr A^p$ counts closed walks of length $p$ in $G$.  Therefore $\tr A^p\ge0$.  Consequently,
\[
        2\mathcal E_p^+(G)
        =
        \mathcal E_p(G)+\tr A^p
        \ge
        \mathcal E_p(G).
\]
By Theorem~\ref{thm:pgeq2}, we know that $\mathcal E_p(G)\ge \mathcal E_p(P_n)$. Since $P_n$ is bipartite, $\mathcal E_p(P_n)=2\mathcal E_p^+(P_n)$.  Hence
\[
        2\mathcal E_p^+(G)
        \ge
        \mathcal E_p(P_n)
        =
        2\mathcal E_p^+(P_n),
\]
and the result follows.
\end{proof}

For \(p\ge1\), the \emph{Schatten \(p\)-norm} of a matrix \(M\) is
\[
        \norm{M}_{S_p}:=
        \left(\sum_j s_j(M)^p\right)^{1/p},
\]
where \(s_j(M)\) are the singular values of \(M\).  Since \(A(G)\) is real symmetric, its singular values are \(|\lambda_1(G)|,\ldots,|\lambda_n(G)|\). Thus, for \(p\ge1\), we have $\mathcal E_p(G)=\norm{A(G)}_{S_p}^p$.

We next record a separate argument for the first non-bipartite even exponent. The proof is based on a vertex-addition estimate for positive \(p\)-energy.

\begin{lemma}\label{lem:positive-vertex-gain}
Let \(H\) be a graph, and let \(G\) be obtained from \(H\) by adding one new
vertex \(v\) of degree \(d\ge1\).  Then, for every real number \(p\ge3\),
\[
        \mathcal E_p^+(G)-\mathcal E_p^+(H)\ge d^{p/2}.
\]
\end{lemma}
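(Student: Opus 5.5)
The plan is to compare the spectra of \(H\) and \(G\) through Cauchy interlacing, and then use low-order trace identities together with the convexity of \(\varphi(x):=x_+^p\) for \(p\ge3\).

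\textbf{Setup.} Write
\[
A(G)=\begin{pmatrix}A(H)&b\\ b^\top&0\end{pmatrix},
\]
where \(b\in\{0,1\}^{n-1}\) is the indicator vector of \(N(v)\), so \(\|b\|^2=d\). Let \(\alpha_1\ge\cdots\ge\alpha_{n-1}\) be the eigenvalues of \(A(H)\) and \(\beta_1\ge\cdots\ge\beta_n\) those of \(A(G)\). Cauchy interlacing gives
\[
\beta_1\ge\alpha_1\ge\beta_2\ge\cdots\ge\alpha_{n-1}\ge\beta_n .
\]
The first three trace identities are
\[
\sum\beta_i-\sum\alpha_i=0,\qquad
\sum\beta_i^2-\sum\alpha_i^2=2d,\qquad
\sum\beta_i^3-\sum\alpha_i^3=6\,e(N(v))\ge0 .
\]
Since \(\varphi\) is nondecreasing, interlacing gives \(\varphi(\beta_{i+1})\le\varphi(\alpha_i)\le\varphi(\beta_i)\). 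Hence the gain telescopes:
\[
\Delta:=\mathcal E_p^+(G)-\mathcal E_p^+(H)=\sum_i\bigl(\varphi(\beta_i)-\varphi(\alpha_i)\bigr)
\]
is a sum of positive contributions, coming from the upward moves \(\alpha_i\to\beta_i\), and negative contributions, coming from the downward moves \(\alpha_i\to\beta_{i+1}\).

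\textbf{Step 1: a variational lower bound.} Since \(\mathcal E_p^+(M)=\max_P\tr\bigl((PMP)_+^p\bigr)\) over orthogonal projections \(P\), compressions can only decrease positive \(p\)-energy. I would compress \(A(G)\) onto the span of the positive eigenvectors \(x_1,\dots,x_k\) of \(A(H)\) (padded by \(0\) at \(v\)) together with \(e_v\). This produces a bordered matrix
\[
C=\begin{pmatrix}\Lambda&c\\ c^\top&0\end{pmatrix},\qquad \Lambda=\operatorname{diag}(\alpha_1,\dots,\alpha_k),\quad c_j=x_j^\top b .
\]
The positive eigenvalues of \(C\) satisfy secular-equation interlacing with \(\Lambda\). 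Only one positive eigenvalue is new, and the trace identity gives \(\sum_j\beta_j(C)^2-\sum_j\alpha_j^2=2\|c\|^2\). Using convexity of \(x\mapsto x^{p/2}\) (valid since \(p\ge2\)), I expect
\[
\tr C_+^p-\sum_{j\le k}\alpha_j^p\ \ge\ \|c\|^{p},
\]
because the extra squared mass \(\|c\|^2\) is concentrated at the top of the spectrum. This settles the case where \(b\) lies in the positive eigenspace of \(A(H)\).

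\textbf{Step 2: the general case.} Write \(b=b_++b_0+b_-\) according to the positive, zero and negative spectral subspaces of \(A(H)\). The component \(b_-\) is the obstacle: in the compression above it only enlarges the negative spectrum. To handle it I would instead use the moment identities directly. The interlacing pattern means \(\Delta\) is determined by how the extra squared mass \(2d\) is split between the positive side and the negative side. The cubic identity (closed \(3\)-walks through \(v\) are nonnegative) shows that at least half of the extra second moment, namely at least \(d\), lands on the positive side. This is exactly where \(p\ge3\) is used: for \(p\ge3\), \(x\mapsto x_+^p\) dominates a suitable combination of \(x^2\) and \(x^3\). Concretely, I would prove
\[
\Delta\ \ge\ \Bigl(\sum(\beta_i)_+^2-\sum(\alpha_i)_+^2\Bigr)^{p/2}.
\]
This would follow from interlacing plus convexity of \(s\mapsto s^{p/2}\), by a majorization (Karamata-type) argument on the squared positive eigenvalues. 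Combined with
\[
\sum(\beta_i)_+^2-\sum(\alpha_i)_+^2\ \ge\ d,
\]
this gives \(\Delta\ge d^{p/2}\).

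\textbf{Main obstacle.} I expect the hardest step to be the last inequality, \(\sum(\beta_i)_+^2-\sum(\alpha_i)_+^2\ge d\), i.e.\ the claim that the positive side captures at least half of the squared-mass increase \(2d\). Interlacing alone does not force this. I would first try to derive it from the \(3\)-moment identity combined with interlacing, via a stop-loss/majorization comparison of the two sides. If that fails, I would fall back on the block superadditivity theorem of Akbari–Kumar–Mohar–Pragada~\cite{AKMP2025}, applied to a suitable rotated \(2\times2\) block form of \(A(G)\) that isolates \(e_v\) and the normalized \(b\).
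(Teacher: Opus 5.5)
\textbf{There is a genuine gap in Step 2.} The inequality you flag as the main obstacle,
\[
\sum_i(\beta_i)_+^2-\sum_i(\alpha_i)_+^2\ \ge\ d,
\]
is not merely hard: it is false. It is exactly the $p=2$ case of the lemma.

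Take $H=K_3$ and attach a pendant vertex, so $d=1$ and $G$ is the paw. Its characteristic polynomial is $(x+1)(x^3-x^2-3x+1)$, with eigenvalues approximately $2.1701,\ 0.3111,\ -1,\ -1.4812$. Hence $\mathcal E_2^+(G)\approx 4.806$, while $\mathcal E_2^+(K_3)=4$. The positive side captures only about $0.806$ of the extra second moment $2d=2$, and the negative side takes about $1.194$.

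In this example the cubic trace increase is $0$, since both graphs contain exactly one triangle. So the assertion that the cubic identity forces at least half of the second-moment increase onto the positive side is also false. No fallback, whether superadditivity or anything else, can establish the target inequality. A warning sign is that your concrete chain never uses $p\ge3$; if it worked it would prove the lemma for every $p\ge2$.

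\textbf{What can be salvaged.} Your rearrangement step is sound: interlacing makes the intervals $[(\alpha_i)_+,(\beta_i)_+]$ disjoint up to endpoints. The paper uses exactly this mechanism, but it starts from the third moment with the substitution $y=t^3$, which is why $p\ge3$ is needed.

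\textbf{The missing input} is a lower bound on the positive cubic gain, $\mathcal E_3^+(G)-\mathcal E_3^+(H)\ge d^{3/2}$. The paper obtains it as follows.
\begin{enumerate}
\item Write $A(G)=A_0+K$, with $A_0=A(H)\oplus[0]$ and $K$ the adjacency matrix of the star at $v$.
\item Observe that $A_0-K$ is orthogonally similar to $A_0+K$: flip the sign of the $v$-coordinate.
\item Apply the Clarkson--McCarthy inequality in $S_3$. This gives $\mathcal E_3(G)-\mathcal E_3(H)\ge\|K\|_{S_3}^3=2d^{3/2}$.
\item Use $\mathcal E_3^+=\frac12(\mathcal E_3+\tr A^3)$ together with $\tr A(G)^3\ge\tr A(H)^3$ to halve this bound to $d^{3/2}$.
\end{enumerate}
Trace identities and interlacing alone cannot produce the bound on $\mathcal E_3$, because $\mathcal E_3$ is not a trace polynomial. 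Some Schatten-norm convexity input of this kind is required.
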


\begin{proof}
Order the vertices of \(G\) so that the new vertex \(v\) is last, and let \(b\) be the \(0\)-\(1\) column vector whose support is \(N_G(v)\subseteq V(H)\). Then
\[
        A_0:=A(H)\oplus[0]
        =
        \begin{pmatrix}
        A(H)&0\\
        0&0
        \end{pmatrix},
        \qquad
        K:=A(G)-A_0
        =
        \begin{pmatrix}
        0&b\\
        b^{\top}&0
        \end{pmatrix}.
\]
Thus \(K\) is the adjacency matrix of a star with center \(v\) and \(d\) leaves. Its nonzero eigenvalues are \(\pm\sqrt d\), and hence $\norm{K}_{S_3}^3=2d^{3/2}$.

Moreover,
\[
        A_0+K
        =
        \begin{pmatrix}
        A(H)&b\\
        b^{\top}&0
        \end{pmatrix}
        =
        A(G).
\]
We also claim that \(A_0-K\) is orthogonally similar to \(A_0+K\).  Indeed, let
$
        D=
        \begin{pmatrix}
        I&0\\
        0&-1
        \end{pmatrix}
$, where \(I\) is the identity matrix on the vertex set of \(H\).  Then \(D\) is
orthogonal and \(D^{-1}=D\).  A direct multiplication gives
\[
        D(A_0+K)D
        =
        \begin{pmatrix}
        A(H)&-b\\
        -b^{\top}&0
        \end{pmatrix}
        =
        A_0-K.
\]
Thus \(A_0-K\) is orthogonally similar to \(A_0+K=A(G)\).  In particular,
\[
        \norm{A_0-K}_{S_3}=\norm{A_0+K}_{S_3}=\norm{A(G)}_{S_3}.
\]

By the Clarkson--McCarthy inequality for Schatten norms~\cite{McCarthy1967},
\[
        \norm{X+Y}_{S_r}^r+\norm{X-Y}_{S_r}^r
        \ge
        2\norm{X}_{S_r}^r+2\norm{Y}_{S_r}^r
        \qquad(r\ge2).
\]
Taking \(r=3\), \(X=A_0\), and \(Y=K\), we obtain
\[
        2\mathcal E_3(G)
        =
        \norm{A_0+K}_{S_3}^3+\norm{A_0-K}_{S_3}^3
        \ge
        2\mathcal E_3(H)+4d^{3/2}.
\]
Therefore
\[
        \mathcal E_3(G)-\mathcal E_3(H)\ge2d^{3/2}.
\]

We now pass from total \(3\)-energy to positive \(3\)-energy.  Since
\(\tr A(J)^3\) counts the closed walks of length \(3\) in a graph \(J\), and
since every closed walk of length \(3\) in \(H\) is also such a walk in \(G\),
we have
\[
        \tr A(G)^3-\tr A(H)^3\ge0.
\]
Since, for every graph \(J\), $\mathcal E_3^+(J) = \frac12\left(\mathcal E_3(J)+\tr A(J)^3\right)$, we get
\begin{equation}\label{eq:positive-three-vertex-gain}
        \mathcal E_3^+(G)-\mathcal E_3^+(H)\ge d^{3/2}.
\end{equation}

It remains to upgrade this estimate from \(p=3\) to every \(p\ge3\).  Let $\lambda_1\ge\cdots\ge\lambda_{m+1}$ be the eigenvalues of \(G\), and let $\theta_1\ge\cdots\ge\theta_m$ be the eigenvalues of \(H\).  By Cauchy interlacing,
\[
        \lambda_i\ge\theta_i\ge\lambda_{i+1}
        \qquad(1\le i\le m).
\]
Since \(G\) has at least one edge, its adjacency matrix is nonzero and has
trace zero; hence \(\lambda_{m+1}<0\).  In particular,
\((\lambda_{m+1})_+=0\).  Therefore, for every \(p>0\),
\[
        \mathcal E_p^+(G)-\mathcal E_p^+(H)
        =
        \sum_{i=1}^m\left((\lambda_i)_+^p-(\theta_i)_+^p\right)  =
        \sum_{i=1}^m
        \int_{(\theta_i)_+}^{(\lambda_i)_+} p t^{p-1}\,dt .
\]
The intervals
\[
        I_i=\bigl[(\theta_i)_+,(\lambda_i)_+\bigr]
        \qquad(1\le i\le m)
\]
are pairwise disjoint up to endpoints, again by interlacing.  Put
\[
        E=\bigcup_{i=1}^m I_i .
\]
Then
\[
        \mathcal E_p^+(G)-\mathcal E_p^+(H)
        =
        \int_E p t^{p-1}\,dt .
\]
In particular, \eqref{eq:positive-three-vertex-gain} says that
\begin{equation}\label{eq:inte3t2dtged32}
        \int_E 3t^2\,dt\ge d^{3/2}.
\end{equation}

Set \(a=\sqrt d\), and let \(\widetilde E\) be the image of \(E\) under the change of variables \(y=t^3\).  Since \(dy=3t^2\,dt\), \eqref{eq:inte3t2dtged32} means that \(|\widetilde E|\ge a^3\). For \(p\ge3\), the function $y\mapsto \frac p3 y^{p/3-1}$ is nonnegative and nondecreasing on \([0,\infty)\).  Hence, among measurable subsets of \([0,\infty)\) of measure at least \(a^3\), the integral of this function is minimized by the initial interval \([0,a^3]\).  Therefore
\[
        \mathcal E_p^+(G)-\mathcal E_p^+(H)
        =
        \int_{\widetilde E}\frac p3 y^{p/3-1}\,dy  
        \ge
        \int_0^{a^3}\frac p3 y^{p/3-1}\,dy
        =
        a^p
        =
        d^{p/2}.
\]
This proves the lemma.
\end{proof}

The next elementary estimate is the path-side deficit needed in the induction for \(p=4\).

\begin{lemma}\label{lem:path-deficit-positive-four}
Let \(q\ge1\), let \(n_1,\ldots,n_q\ge1\), and put $N=1+n_1+\cdots+n_q$. Then
\[
        \mathcal E_4^+(P_N)-\sum_{i=1}^q\mathcal E_4^+(P_{n_i})
        <
        (q+1)^2 .
\]
\end{lemma}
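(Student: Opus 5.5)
The plan is to compute \(\mathcal E_4^+(P_n)\) in closed form and reduce the claim to an elementary inequality in \(q\).

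First I compute \(\mathcal E_4(P_n)\) for \(n\ge2\). Since \(P_n\) is bipartite, \(\mathcal E_4^+(P_n)=\frac12\mathcal E_4(P_n)\), as in the proof of Theorem~\ref{thm:positive-bipartite}. Also \(\mathcal E_4(P_n)=\tr A(P_n)^4\), because \(\lambda^4=|\lambda|^4\). This trace counts closed walks of length \(4\). Summing over starting vertices, such walks are of two kinds:
\begin{itemize}
\item walks along a single edge and back twice, contributing \(\sum_v \deg(v)=2m\);
\item walks through a path with two edges, contributing \(4\) per copy of \(P_3\).
\end{itemize}
There are no \(4\)-cycles in a path. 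Equivalently,
\[
\tr A^4=\sum_v \deg(v)^2+\sum_{v\ne w}|N(v)\cap N(w)|^2 .
\]
For \(P_n\) with \(n\ge2\) we have \(m=n-1\) edges and \(n-2\) copies of \(P_3\). Hence
\[
\mathcal E_4(P_n)=2(n-1)+4(n-2)=6n-10,
\qquad
\mathcal E_4^+(P_n)=3n-5 .
\]
This agrees with \(P_2\), where the eigenvalues are \(\pm1\). For \(n=1\) we have \(\mathcal E_4^+(P_1)=0\), whereas the formula \(3n-5\) would give \(-2\). So in general
\[
\mathcal E_4^+(P_n)=3n-5+2\cdot\mathbbm 1[n=1].
\]

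Next I substitute. Because \(q\ge1\), we have \(N\ge2\), so \(\mathcal E_4^+(P_N)=3N-5\). Let \(k\) be the number of indices \(i\) with \(n_i=1\). Then
\[
\mathcal E_4^+(P_N)-\sum_{i=1}^q\mathcal E_4^+(P_{n_i})
=3\Bigl(1+\sum_i n_i\Bigr)-5-\sum_i(3n_i-5)-2k
=5q-2-2k\le 5q-2 .
\]

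Finally, it suffices to show \(5q-2<(q+1)^2\). This is equivalent to \(q^2-3q+3>0\). That quadratic has negative discriminant \(9-12<0\), so the inequality holds for every real \(q\), and in particular for every \(q\ge1\).

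The only delicate point is the boundary case \(n_i=1\), where the linear formula for \(\mathcal E_4^+(P_n)\) fails. It only helps, since it subtracts the extra \(2k\ge0\).
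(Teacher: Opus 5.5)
Your proposal is correct and follows the paper's proof essentially verbatim: you compute $\mathcal E_4^+(P_m)=\tfrac12\tr A(P_m)^4=3m-5$ for $m\ge2$ (with $\mathcal E_4^+(P_1)=0$), reduce the deficit to $5q-2-2s\le 5q-2$ where $s$ counts the indices with $n_i=1$, and compare with $(q+1)^2$. Your closed-walk count and the discriminant check $q^2-3q+3>0$ simply spell out two steps that the paper states without detail.
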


\begin{proof}
For \(m\ge2\), the path \(P_m\) is bipartite and
\[
        \mathcal E_4(P_m)=\tr A(P_m)^4=6m-10.
\]
Hence
\[
        \mathcal E_4^+(P_m)=\frac12\mathcal E_4(P_m)=3m-5
        \qquad(m\ge2),
\]
while \(\mathcal E_4^+(P_1)=0\).

Let \(s\) be the number of indices \(i\) for which \(n_i=1\).  Since
\(N=1+\sum_i n_i\), we have
\[
\begin{aligned}
        \mathcal E_4^+(P_N)-\sum_{i=1}^q\mathcal E_4^+(P_{n_i})
        &=
        (3N-5)
        -
        \sum_{i:n_i\ge2}(3n_i-5)   \\
        &=
        5q-2-2s
        \le
        5q-2
        <(q+1)^2.
\end{aligned}
\]
This proves the lemma.
\end{proof}

We can now settle Conjecture~\ref{conj:positive-p-energy} at the first
non-bipartite even exponent.  For \(n\ge2\), this gives the exact path lower
bound
\[
        \mathcal E_4^+(G)\ge \mathcal E_4^+(P_n)=3n-5,
\]
and therefore strengthens the \(p=4\) case of the linear lower bound of
Akbari, Kumar, Mohar and Pragada~\cite[Theorem~2]{AKMP2025}.

\begin{theorem}\label{thm:positive-four-energy}
Let \(G\) be a connected graph on \(n\) vertices.  Then
\[
        \mathcal E_4^+(G)\ge \mathcal E_4^+(P_n).
\]
\end{theorem}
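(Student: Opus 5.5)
We argue by strong induction on \(n\). For \(n=1\) both sides vanish, and for \(n=2\) we have \(G\cong P_2\), so assume \(n\ge3\). The plan is to remove one vertex \(v\) so that \(G-v\) splits into components to which the induction hypothesis applies. Lemma~\ref{lem:positive-vertex-gain} then pays for the added vertex, and Lemma~\ref{lem:path-deficit-positive-four} shows that this payment exceeds what the path needs.

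The choice of \(v\) is as follows. Let \(v\) be any vertex of \(G\), and let \(H=G-v\). Let \(H_1,\ldots,H_q\) be the components of \(H\), with \(n_i=|V(H_i)|\), so \(N:=1+\sum_i n_i=n\) and \(q\ge1\). Since \(G\) is connected, \(v\) has at least one neighbour in each \(H_i\), so \(d:=\deg_G(v)\ge q\).

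The chain of estimates is
\[
        \mathcal E_4^+(G)\ \ge\ \mathcal E_4^+(H)+d^2 \ =\ \sum_{i=1}^q\mathcal E_4^+(H_i)+d^2 \ \ge\ \sum_{i=1}^q\mathcal E_4^+(P_{n_i})+d^2,
\]
with the steps justified as follows.
\begin{itemize}
\item The first inequality is Lemma~\ref{lem:positive-vertex-gain} with \(p=4\).
\item The equality holds because the spectrum of \(H\) is the multiset union of the spectra of the \(H_i\), so positive energy is additive over components.
\item The last inequality is the induction hypothesis applied to each connected \(H_i\), which has fewer than \(n\) vertices.
\end{itemize}

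It remains to compare with the path. If \(d\ge q+1\), then \(d^2\ge(q+1)^2\), and Lemma~\ref{lem:path-deficit-positive-four} gives \(\mathcal E_4^+(P_n)<\sum_i\mathcal E_4^+(P_{n_i})+(q+1)^2\), which finishes the proof.

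The main obstacle is the case \(d=q\), where \(v\) has exactly one neighbour in each component. For this case I would use the sharper exact count from the proof of Lemma~\ref{lem:path-deficit-positive-four}: the deficit equals \(5q-2-2s\), where \(s\) is the number of indices with \(n_i=1\). So it suffices to find \(v\) with \(d^2\ge 5q-2-2s\).
\begin{itemize}
\item If \(q\ge5\), then \(d^2\ge q^2\ge5q-2\) holds automatically.
\item For \(q\le4\), choose \(v\) cleverly. Take \(v\) a leaf of a spanning tree, so that \(H\) is connected and \(q=1\). The requirement becomes \(d^2\ge3-2s\), where \(s=1\) exactly when \(n=2\). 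With \(q=1\) and \(n\ge3\) we need \(d^2\ge3\), that is \(d\ge2\).
\item If some non-cut vertex has degree at least \(2\), choose it as \(v\) and we are done. Otherwise every non-cut vertex of \(G\) is a leaf, which forces \(G\) to be a tree, since a block containing a cycle has non-cut vertices of degree at least \(2\).
\end{itemize}

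So the remaining case is a tree. Here I would instead choose \(v\) adjacent to a leaf, as the unique non-leaf neighbour of that leaf in a longest path. Then the components of \(G-v\) include \(s\ge1\) singletons, and \(d=q\ge2\) unless \(G\) is a path. If \(G\cong P_n\), equality is trivial. Otherwise the needed inequality \(q^2\ge5q-2-2s\) can be checked for small \(q\) using \(s\ge q-1\). This holds for the penultimate vertex on a longest path, since all its neighbours other than the one toward the rest of the path are leaves. With \(s\ge q-1\), the deficit becomes \(5q-2-2(q-1)=3q\le q^2\) for \(q\ge3\). The case \(q=2\) gives a deficit of at most \(4=q^2\), and \(q=1\) means \(v\) is an endpoint of \(G\), which does not happen for \(n\ge3\).

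I expect the bookkeeping of this tree case to be the delicate step.
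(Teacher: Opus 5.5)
\textbf{The tree case has a genuine gap.} For $q=2$ and $s\ge q-1=1$, the deficit is $5q-2-2s=8-2s$. This equals $6$ when $s=1$, not ``at most $4$''. It equals $4$ only when $s=2$, i.e.\ $G\cong P_3$.

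Concretely, take the penultimate vertex $v=x_1$ of degree $2$ in a tree on $n\ge4$ vertices. Your chain gives only $\mathcal E_4^+(G)\ge 0+\bigl(3(n-2)-5\bigr)+4=3n-7<3n-5$.

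No cleverer choice of $v$ rescues the scheme. Take the spider with centre $c$ and legs $c-a_i-b_i$ ($i=1,2,3$), so $n=7$. Deleting $c$, $a_i$ or $b_i$ requires $d^2\ge 13,\,6,\,3$ respectively, but $d^2=9,\,4,\,1$. So the gain $d^{2}$ from Lemma~\ref{lem:positive-vertex-gain}, combined with the induction hypothesis, cannot prove the theorem for all trees.

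The missing ingredient is a separate argument for trees. The paper simply invokes Theorem~\ref{thm:positive-bipartite}: trees are bipartite, so $\mathcal E_4^+(T)=\tfrac12\mathcal E_4(T)$, and Theorem~\ref{thm:pgeq2} applies. At $p=4$ you could also do it by hand:
\begin{itemize}
\item A tree has no $4$-cycles, so $\mathcal E_4^+(T)=\tfrac12\tr A(T)^4=\sum_v d_v^2-(n-1)$.
\item Write $d_v=1+e_v$ with $e_v\ge0$ and $\sum_v e_v=n-2$. Then $\sum_v e_v^2\ge\sum_v e_v$ gives $\sum_v d_v^2\ge 4n-6$.
\item Hence $\mathcal E_4^+(T)\ge 3n-5=\mathcal E_4^+(P_n)$.
\end{itemize}

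\textbf{Your reduction to trees is also false as stated, though easy to repair.} You claim that if every non-cut vertex is a leaf then $G$ is a tree. This fails because an internal block can consist entirely of cut vertices. For example, the triangle with one pendant edge at each corner has only leaves as non-cut vertices, yet it is not a tree.

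The repair is exactly the paper's argument. If $G$ is not a tree, take $v$ on a cycle. Two of its cycle neighbours lie in the same component of $G-v$, so $d\ge q+1$. Then Lemma~\ref{lem:path-deficit-positive-four} closes the induction directly, with no case analysis on $q$ or $s$.
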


\begin{proof}
We argue by induction on \(n\).  The case \(n=1\) is immediate.

If \(G\) is a tree, then \(G\) is bipartite, and the result follows from Theorem~\ref{thm:positive-bipartite}.

Assume now that \(G\) is not a tree.  Choose a vertex \(v\) lying on a cycle.
Let
\[
        d=d_G(v),
        \qquad
        G-v=G_1\sqcup\cdots\sqcup G_q,
        \qquad
        n_i=|V(G_i)|.
\]
Every component of \(G-v\) contains at least one neighbor of \(v\), because
\(G\) is connected.  Moreover, two neighbors of \(v\) on the chosen cycle
remain in the same component after deleting \(v\).  Therefore \(d\ge q+1\).

By Lemma~\ref{lem:positive-vertex-gain}, applied with \(p=4\),
\[
        \mathcal E_4^+(G)-\mathcal E_4^+(G-v)\ge d^2\ge(q+1)^2.
\]
By additivity over connected components and by the induction hypothesis,
\[
        \mathcal E_4^+(G-v)
        =
        \sum_{i=1}^q\mathcal E_4^+(G_i)
        \ge
        \sum_{i=1}^q\mathcal E_4^+(P_{n_i}).
\]
Consequently,
\[
\begin{aligned}
        \mathcal E_4^+(G)
        &=
        \mathcal E_4^+(G-v)
        +
        \bigl(\mathcal E_4^+(G)-\mathcal E_4^+(G-v)\bigr)\\
        &\ge
        \sum_{i=1}^q\mathcal E_4^+(P_{n_i})+(q+1)^2.
\end{aligned}
\]
Since \(n=1+\sum_i n_i\), Lemma~\ref{lem:path-deficit-positive-four} gives
\[
        \sum_{i=1}^q\mathcal E_4^+(P_{n_i})+(q+1)^2
        >
        \mathcal E_4^+(P_n).
\]
Therefore $\mathcal E_4^+(G)>\mathcal E_4^+(P_n)$. This completes the proof.
\end{proof}

\subsection{Laplacian and signless Laplacian path-minimality}

We next turn to Laplacian and signless Laplacian spectra. Power sums of Laplacian and signless Laplacian eigenvalues have been studied
in their own right; see, for instance, Akbari, Ghorbani, Koolen and
Oboudi~\cite{AkbariGhorbaniKoolenOboudi2010} and Kaya and
Maden~\cite{KayaMaden2018}.  We shall derive path-minimality results for these
spectra from the bipartite comparison theorem.

We shall use the following standard construction.  For a graph \(G\), its \emph{subdivision graph} \(S(G)\) is the bipartite graph with vertex set $V(S(G))=V(G)\sqcup E(G)$, where a vertex \(v\in V(G)\) is adjacent in \(S(G)\) to an edge \(e\in E(G)\) exactly when \(v\) is an endpoint of \(e\) in \(G\). Thus each edge \(uv\in E(G)\) is replaced in \(S(G)\) by a path \(u-e-v\) of length two. In particular, $S(P_n)=P_{2n-1}$. The corresponding \emph{unsigned incidence matrix} of \(G\) is the
\(|V(G)|\times |E(G)|\) matrix \(N_G\) defined by
\[
        (N_G)_{v,e}
        =
        \begin{cases}
        1, & \text{if }v\text{ is an endpoint of }e,\\
        0, & \text{otherwise}.
        \end{cases}
\]
With respect to the bipartition \(V(G)\sqcup E(G)\), the biadjacency matrix of
\(S(G)\) is precisely \(N_G\).

We first record a stronger comparison for admissible third-order convex test
functions.  The power-sum inequalities will then follow as a separate
corollary.

\begin{theorem}\label{thm:lap-signless-master}
Let \(G\) be a connected graph on \(n\) vertices.  If
\(F:[0,\infty)\to\mathbb R\) is admissible third-order convex, then
\[
        \sum_{i=1}^n F(\lambda_i(L(G)))
        \ge
        \sum_{i=1}^n F(\lambda_i(L(P_n)))
\quad
\text{and}
\quad
        \sum_{i=1}^n F(\lambda_i(Q(G)))
        \ge
        \sum_{i=1}^n F(\lambda_i(Q(P_n))).
\]
\end{theorem}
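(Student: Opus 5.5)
Since $N_GN_G^\top=D(G)+A(G)=Q(G)$, the eigenvalues of $Q(G)$ are exactly the squared singular values $\mu_i(S(G))$ of the biadjacency matrix $N_G$ of the subdivision graph, padded with zeros to length $n$. So the plan is to transfer the bipartite stop-loss comparison of Theorem~\ref{thm:bipartite-stoploss} from $S(G)$ to $Q(G)$, handle $L(G)$ by bipartiteness, and then integrate against $F'''$.

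\emph{Step 1 (the $Q$ case at the level of stop-loss).} The graph $S(G)$ is connected, bipartite, and has $n+|E(G)|$ vertices. Let $T$ be a spanning tree of $G$. Then $S(T)$ is a spanning subgraph of $S(G)$, and $S(G)$ arises from $S(T)$ by adding the vertices $e\in E(G)\setminus E(T)$, each of degree two, on the edge side. On the level of $N$, each such vertex adds a column, so $Q(G)=Q(T)+\sum_{e\notin T}b_eb_e^\top$ with $b_e\succeq 0$ entrywise. Since $x\mapsto(x-t)_+^2$ is monotone and convex, $\tr(M-tI)_+^2$ is monotone in the Loewner order, and we get $\tr(Q(G)-tI)_+^2\ge\tr(Q(T)-tI)_+^2$.

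For the tree $T$, $S(T)$ is a connected bipartite graph on $2n-1$ vertices. Theorem~\ref{thm:bipartite-stoploss} then gives $\mathsf S_t(S(T))\ge\mathsf S_t(P_{2n-1})=\mathsf S_t(S(P_n))$. Because $\mu_i(S(J))$ are the eigenvalues of $Q(J)$, up to zeros, this reads
\[
        \sum_i(\lambda_i(Q(T))-t)_+^2\ge\sum_i(\lambda_i(Q(P_n))-t)_+^2\qquad(t\ge0).
\]
In fact Theorem~\ref{thm:bipartite-stoploss} can be applied directly to $S(G)$ without passing through $T$: the path it compares with is then $P_{n+|E(G)|}$, whose stop-loss dominates that of $P_{2n-1}$ by the monotonicity just used, since $P_{2n-1}$ is an induced subpath. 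So either route works.

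\emph{Step 2 (from stop-loss to $F$).} For admissible $F$ and $x\ge0$, Taylor's formula with integral remainder gives
\[
        F(x)=F'(0)x+\tfrac12F''(0)x^2+\tfrac12\int_0^\infty F'''(t)(x-t)_+^2\,dt .
\]
The coefficients $F'(0)$, $F''(0)$ and $F'''$ are all nonnegative. So it suffices to compare three quantities for $G$ and $P_n$: the trace $\sum_i\lambda_i$, the sum of squares $\sum_i\lambda_i^2$, and the stop-loss $\mathsf S_t$ for every $t>0$.

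The trace satisfies $\tr Q(G)=2|E(G)|\ge2(n-1)=\tr Q(P_n)$. The sum of squares is the case $t=0$ of the stop-loss, since all eigenvalues are nonnegative. If $\int F'''$ diverges, the inequality is handled by monotone convergence applied to truncations. All spectra are finite and bounded, so only $t\le\lambda_{\max}$ matters.

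\emph{Step 3 (the $L$ case).} Here $L(G)=N^\pm (N^{\pm})^\top$ for a signed incidence matrix $N^\pm$, so one cannot appeal to $S(G)$ directly. For $T$ a tree, $L(T)$ and $Q(T)$ are similar via a diagonal $\pm1$ matrix, because trees are bipartite; this holds in particular for $P_n$. Adding the non-tree edges again gives $L(G)=L(T)+\sum_e c_ec_e^\top$, so
\[
        \tr(L(G)-tI)_+^2\ge\tr(L(T)-tI)_+^2=\tr(Q(T)-tI)_+^2\ge\tr(Q(P_n)-tI)_+^2=\tr(L(P_n)-tI)_+^2 .
\]
The trace comparison for $L$ is identical to that for $Q$. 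Step 2 then finishes the $L$ case.

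The main obstacle is the careful justification of the integral representation in Step 2 under the stated regularity, namely $F''$ only absolutely continuous, and the check that the zero padding is harmless. Both are routine, because $F(0)=0$ and $(0-t)_+=0$.
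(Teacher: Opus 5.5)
Your proof is correct and is essentially the paper's argument: the subdivision graph with $N_TN_T^\top=Q(T)$, the bipartite similarity $L(T)\sim Q(T)$, and Loewner monotonicity from a spanning tree $T$ to $G$. The only difference is bookkeeping. The paper applies Corollary~\ref{cor:connected-third-order-convex} to $S(T)$ and then uses that admissible $F$ is nondecreasing to pass from $T$ to $G$. You instead make that passage at the level of the stop-loss sums and then recover $F$ through the Taylor representation, which is exactly how that corollary follows from Theorem~\ref{thm:bipartite-stoploss}.
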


\begin{proof}
We first prove the result for trees.  Let \(T\) be a tree on \(n\) vertices, and let \(S(T)\) be its subdivision graph.  We regard \(S(T)\) as a bipartite graph with bipartition \(V(T)\sqcup E(T)\).  With this bipartition, the biadjacency matrix of \(S(T)\) is the unsigned incidence matrix \(N_T\) of \(T\), whose rows are indexed by \(V(T)\) and whose columns are indexed by \(E(T)\). Hence $N_TN_T^{\top}=Q(T)$. Indeed, the diagonal entries of \(N_TN_T^{\top}\) are the vertex degrees of \(T\), while its off-diagonal \((u,v)\)-entry is \(1\) exactly when
\(uv\in E(T)\), and is \(0\) otherwise.

Since \(T\) is bipartite, \(Q(T)\) is similar to \(L(T)\).  Indeed, if \(R\) is the diagonal matrix whose diagonal entries are \(1\) on one side of a bipartition of \(T\) and \(-1\) on the other side, then $RQ(T)R=L(T)$. Thus \(L(T)\) and \(Q(T)\) have the same eigenvalues.

The nonzero squared singular values of the biadjacency matrix of \(S(T)\) are
precisely the nonzero eigenvalues of \(N_TN_T^{\top}=Q(T)\).  Since
\(F(0)=0\), possible additional zero eigenvalues do not affect any of the sums
below.  Therefore the squared-singular-value sum for \(S(T)\) agrees with the
\(F\)-sum over the Laplacian, or equivalently the signless Laplacian,
eigenvalues of \(T\).

Applying Corollary~\ref{cor:connected-third-order-convex} to the connected bipartite graph \(S(T)\), and using \(S(P_n)=P_{2n-1}\), gives
\[
        \sum_i F(\mu_i(S(T)))
        \ge
        \sum_i F(\mu_i(S(P_n))),
\]
where \(\mu_i\) denotes the squared singular values used in the definition of
\(\mathsf S_t\).  By the spectral identification above, this is exactly
\[
        \sum_{i=1}^n F(\lambda_i(L(T)))
        =
        \sum_{i=1}^n F(\lambda_i(Q(T)))
        \ge
        \sum_{i=1}^n F(\lambda_i(L(P_n)))
        =
        \sum_{i=1}^n F(\lambda_i(Q(P_n))).
\]
Thus the desired inequalities hold when \(G\) is a tree.

Now let \(G\) be an arbitrary connected graph, and choose a spanning tree
\(T\) of \(G\).  For an edge \(e=uv\), let \(e_u,e_v\) be the corresponding
standard basis vectors.  The contribution of \(e\) to the Laplacian is
\((e_u-e_v)(e_u-e_v)^{\top}\), while its contribution to the signless
Laplacian is \((e_u+e_v)(e_u+e_v)^{\top}\).  Hence
\[
        L(G)-L(T)\succeq0,
        \qquad
        Q(G)-Q(T)\succeq0 .
\]
Therefore,
\[
        \lambda_i(L(G))\ge \lambda_i(L(T)),
        \qquad
        \lambda_i(Q(G))\ge \lambda_i(Q(T))
        \qquad(1\le i\le n).
\]

It remains only to note that every admissible third-order convex function is nondecreasing on \([0,\infty)\).  Indeed, \(F'''\ge0\) almost everywhere and \(F''\in AC_{\mathrm{loc}}([0,\infty))\) imply
\[
        F''(x)=F''(0)+\int_0^x F'''(s)\,ds\ge0
        \qquad(x\ge0),
\]
and hence
\[
        F'(x)=F'(0)+\int_0^x F''(s)\,ds\ge0
        \qquad(x\ge0).
\]
Therefore the inequalities already proved for \(T\) imply the corresponding
inequalities for \(G\).
\end{proof}

\begin{corollary}\label{cor:lap-signless-powers}
Let \(G\) be a connected graph on \(n\) vertices.  Then, for every real number
\(\alpha\ge1\),
\[
        \sum_{i=1}^n \lambda_i(L(G))^\alpha
        \ge
        \sum_{i=1}^n \lambda_i(L(P_n))^\alpha
\quad
\text{and}
\quad
        \sum_{i=1}^n \lambda_i(Q(G))^\alpha
        \ge
        \sum_{i=1}^n \lambda_i(Q(P_n))^\alpha .
\]
\end{corollary}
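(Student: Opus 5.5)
The plan is to apply Theorem~\ref{thm:lap-signless-master} with the test function \(F(x)=x^\alpha\) for \(\alpha\ge2\), and to treat \(1\le\alpha<2\) separately. For \(\alpha\ge2\), \(F(x)=x^\alpha\) is admissible third-order convex. Indeed, \(F\in C^2([0,\infty))\) and \(F(0)=0\). Next, \(F'(0)\ge0\), and \(F''(0)\) equals \(2\) when \(\alpha=2\) and \(0\) when \(\alpha>2\). The function \(F''(x)=\alpha(\alpha-1)x^{\alpha-2}\) is locally absolutely continuous on \([0,\infty)\), because \(\alpha-2\ge0\). Finally, \(F'''(x)=\alpha(\alpha-1)(\alpha-2)x^{\alpha-3}\ge0\) for \(x>0\). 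Local absolute continuity near \(0\) holds even for \(2<\alpha<3\), since \(x^{\alpha-2}\) is continuous, monotone, and has an integrable derivative. Theorem~\ref{thm:lap-signless-master} then gives both inequalities for all \(\alpha\ge2\).

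For \(1\le\alpha<2\) the function \(x^\alpha\) is not admissible: \(F'''<0\) when \(1<\alpha<2\), and for \(\alpha=1\) it is simply linear. I would use a different argument here, which is the main obstacle. The case \(\alpha=1\) is easy. The trace is \(2|E(G)|\ge 2(n-1)=\tr L(P_n)=\tr Q(P_n)\), because a connected graph has at least \(n-1\) edges.

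For \(1<\alpha<2\), I would first reduce to trees using the spanning-tree monotonicity \(\lambda_i(L(G))\ge\lambda_i(L(T))\) from the proof of Theorem~\ref{thm:lap-signless-master}. On trees, all spectra have the same sum \(2(n-1)\), so majorization is the natural tool. It suffices to show that the Laplacian spectrum of \(P_n\) is majorized by that of any tree \(T\) on \(n\) vertices. Karamata's inequality for the convex function \(x^\alpha\) then gives the result.

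To obtain this majorization, I would use the stop-loss comparison of Theorem~\ref{thm:bipartite-stoploss} applied to \(S(T)\) versus \(S(P_n)=P_{2n-1}\). Its squared singular values are exactly the \(Q(T)\)-eigenvalues. However, second-order stop-loss dominance together with equal sums yields only \(\sum F\) comparisons for \(F\) with \(F'''\ge0\). First-order stop-loss, \(\sum(\lambda_i-t)_+\), would be needed for majorization, so this may fail.

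Failing that, I would try a direct argument for \(1<\alpha<2\). Every tree other than \(P_n\) has a vertex of degree at least \(3\). I would perform a standard graft transformation that moves a branch to the end of a path. I would then show that this transformation does not increase \(\sum\lambda_i^\alpha\), by comparing the characteristic polynomials of the two trees. Whether this or an alternative route closes the range \(1<\alpha<2\) is the step I am least sure of. If it resists, I would at least record the range \(\alpha\ge2\) together with \(\alpha=1\), both of which follow cleanly from the results above.
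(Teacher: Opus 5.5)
\textbf{There is a gap: the range $1<\alpha<2$ is not proved.} Your cases $\alpha\ge2$ (via Theorem~\ref{thm:lap-signless-master}, with a correct admissibility check for $x^\alpha$) and $\alpha=1$ (via $\tr L(G)=\tr Q(G)=2|E(G)|\ge2(n-1)$) are fine. Neither of your fallbacks closes the remaining range.

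The majorization route is not merely uncertain; the majorization you need is false. For $n=4$, the Laplacian spectra of $K_{1,3}$ and $P_4$ are $(4,1,1,0)$ and $(2+\sqrt2,\,2,\,2-\sqrt2,\,0)$. The sums of the top two eigenvalues are $5<4+\sqrt2$, so the path's spectrum is not majorized by the star's, and Karamata cannot be applied. The graft/characteristic-polynomial idea is only a sketch. Coefficients of characteristic polynomials do not control non-integer power sums, and you give no mechanism that would.

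\textbf{The missing idea.} You do not need third-order convex test functions at all. Theorem~\ref{thm:pgeq2} holds for every real $p\ge2$, including $2\le p<4$. In that range $|\lambda|^p=F(\lambda^2)$ with $F(x)=x^{p/2}$, and this $F$ is not third-order convex. So Theorem~\ref{thm:pgeq2} is strictly stronger than Corollary~\ref{cor:connected-third-order-convex} exactly where your argument is missing. Your route goes through Theorem~\ref{thm:lap-signless-master}, which rests on Corollary~\ref{cor:connected-third-order-convex}, and so it inevitably loses $1<\alpha<2$.

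\textbf{How to close it (this is the paper's argument).}
\begin{enumerate}
\item For a tree $T$, the subdivision graph $S(T)$ is connected and bipartite on $2n-1$ vertices, with biadjacency matrix $N_T$.
\item Its nonzero adjacency eigenvalues are therefore $\pm\sqrt{\mu}$, where $\mu$ runs over the nonzero eigenvalues of $N_TN_T^\top=Q(T)$. Hence $\mathcal E_{2\alpha}(S(T))=2\sum_i\lambda_i(Q(T))^\alpha$.
\item Apply Theorem~\ref{thm:pgeq2} with $p=2\alpha\ge2$ to $S(T)$ and to $S(P_n)=P_{2n-1}$. This gives the tree inequality for $Q$ for all $\alpha\ge1$ at once.
\item Since $T$ is bipartite, $L(T)$ and $Q(T)$ are cospectral, so the same inequality holds for $L$.
\item The spanning-tree eigenvalue monotonicity you already cite, together with monotonicity of $x^\alpha$, extends the result to general connected $G$.
\end{enumerate}
This makes your separate treatment of $\alpha=1$ and $\alpha\ge2$ unnecessary.
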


\begin{proof}
We first prove the result for trees.  Let \(T\) be a tree on \(n\) vertices.
Recall from the proof of Theorem~\ref{thm:lap-signless-master} that the
nonzero squared singular values of the biadjacency matrix of the subdivision
graph \(S(T)\) are precisely the nonzero eigenvalues of \(Q(T)\).  Since \(T\)
is bipartite, \(Q(T)\) and \(L(T)\) are cospectral.  Thus, for every
\(\alpha\ge1\),
\[
        \mathcal E_{2\alpha}(S(T))
        =
        2\sum_{i=1}^n \lambda_i(L(T))^\alpha
        =
        2\sum_{i=1}^n \lambda_i(Q(T))^\alpha.
\]

Put \(p=2\alpha\).  Since \(p\ge2\), Theorem~\ref{thm:pgeq2}, applied to the
connected graph \(S(T)\), gives
\[
        \mathcal E_{2\alpha}(S(T))
        \ge
        \mathcal E_{2\alpha}(P_{2n-1}).
\]
Since \(S(P_n)=P_{2n-1}\), the same spectral identification applied to \(P_n\)
gives
\[
        \mathcal E_{2\alpha}(P_{2n-1})
        =
        \mathcal E_{2\alpha}(S(P_n))
        =
        2\sum_{i=1}^n \lambda_i(L(P_n))^\alpha
        =
        2\sum_{i=1}^n \lambda_i(Q(P_n))^\alpha.
\]
Combining the last two displays proves both the Laplacian and signless
Laplacian power-sum inequalities for trees.

Now let \(G\) be an arbitrary connected graph, and choose a spanning tree \(T\) of \(G\).  As in the proof of Theorem~\ref{thm:lap-signless-master}, we have
\[
        \lambda_i(L(G))\ge \lambda_i(L(T)),
        \qquad
        \lambda_i(Q(G))\ge \lambda_i(Q(T))
        \qquad(1\le i\le n).
\]
Since \(x\mapsto x^\alpha\) is increasing on \([0,\infty)\), the tree case
implies the desired inequalities for \(G\).
\end{proof}

The same argument gives the following stronger tree-level second-order stop-loss comparison.

\begin{remark}\label{rem:laplacian-stoploss-form}
Let \(T\) be a tree on \(n\) vertices.  Then, for every \(t\ge0\),
\[
        \sum_{i=1}^n \bigl(\lambda_i(L(T))-t\bigr)_+^2
        \ge
        \sum_{i=1}^n \bigl(\lambda_i(L(P_n))-t\bigr)_+^2,
\]
and
\[
        \sum_{i=1}^n \bigl(\lambda_i(Q(T))-t\bigr)_+^2
        \ge
        \sum_{i=1}^n \bigl(\lambda_i(Q(P_n))-t\bigr)_+^2 .
\]
Indeed, applying Theorem~\ref{thm:bipartite-stoploss} to the subdivision graph \(S(T)\) gives $\mathsf S_t(S(T))\ge \mathsf S_t(P_{2n-1})$. Since \(S(P_n)=P_{2n-1}\), this is the same as $\mathsf S_t(S(T))\ge \mathsf S_t(S(P_n))$. As explained in the proof of Theorem~\ref{thm:lap-signless-master}, the
nonzero squared singular values of the biadjacency matrix of \(S(T)\) are
precisely the nonzero eigenvalues of \(Q(T)\), and hence also of \(L(T)\),
because \(T\) is bipartite.  The possible zero eigenvalues do not contribute,
since \((0-t)_+^2=0\) for \(t\ge0\).  The same identification applies to
\(P_n\), and the two displayed inequalities follow.
\end{remark}

Theorem~\ref{thm:lap-signless-master} and Corollary~\ref{cor:lap-signless-powers} yield unified proofs of several path-minimality statements for Laplacian-type spectral indices. We first
record the exponential, or Estrada-type, consequences. For example, Ili\'c and Zhou proved that \(P_n\) minimizes the Laplacian Estrada index among trees~\cite{IlicZhou2010}; the following result gives simultaneous Laplacian and signless Laplacian versions for every connected graph and every positive
parameter.

\begin{corollary}\label{cor:lap-signless-estrada}
Let \(G\) be a connected graph on \(n\) vertices.  Then, for every
\(\theta>0\),
\[
        \sum_{i=1}^n e^{\theta\lambda_i(L(G))}
        \ge
        \sum_{i=1}^n e^{\theta\lambda_i(L(P_n))}
\qquad
\text{and}
\qquad
        \sum_{i=1}^n e^{\theta\lambda_i(Q(G))}
        \ge
        \sum_{i=1}^n e^{\theta\lambda_i(Q(P_n))}.
\]
\end{corollary}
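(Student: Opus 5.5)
The natural plan is to apply Theorem~\ref{thm:lap-signless-master} to a suitable admissible third-order convex function. Since the sums \(\sum_i e^{\theta\lambda_i}\) include the constant term coming from \(F(0)\), we cannot use \(x\mapsto e^{\theta x}\) directly, because it has value \(1\) at \(0\). Instead we take
\[
        F_\theta(x):=e^{\theta x}-1,\qquad x\ge0,
\]
and check admissibility. \(F_\theta\) is smooth, so it lies in \(C^2([0,\infty))\) and \(F_\theta''\) is locally absolutely continuous. Moreover \(F_\theta(0)=0\), \(F_\theta'(0)=\theta>0\), \(F_\theta''(0)=\theta^2>0\), and \(F_\theta'''(x)=\theta^3e^{\theta x}>0\) for all \(x\). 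Hence \(F_\theta\) is admissible third-order convex for every \(\theta>0\).

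Theorem~\ref{thm:lap-signless-master} then gives
\[
        \sum_{i=1}^n\bigl(e^{\theta\lambda_i(L(G))}-1\bigr)
        \ge
        \sum_{i=1}^n\bigl(e^{\theta\lambda_i(L(P_n))}-1\bigr),
\]
and the analogous inequality for \(Q\). Both sides run over exactly \(n\) eigenvalues, since \(G\) and \(P_n\) both have \(n\) vertices. So the constant \(-n\) cancels from both sides, and the two claimed inequalities follow.

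There is no real obstacle. The only point requiring care is the normalization \(F(0)=0\), which forces the subtraction of \(1\). This is harmless precisely because the vertex counts agree; it is also why one works with the \(n\) eigenvalues of \(L\) and \(Q\) rather than with padded squared-singular-value lists of different lengths.
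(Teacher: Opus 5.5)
Your proposal is correct and follows the paper's proof exactly. The paper also applies Theorem~\ref{thm:lap-signless-master} to \(F_\theta(x)=e^{\theta x}-1\), checks admissibility via \(F_\theta(0)=0\), \(F_\theta'(0)=\theta\), \(F_\theta''(0)=\theta^2\) and \(F_\theta'''(x)=\theta^3e^{\theta x}\ge0\), and then adds the common constant \(n\) to both sides.
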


\begin{proof}
Let $F_\theta(x):=e^{\theta x}-1$. Then \(F_\theta\) is admissible third-order convex: it satisfies
\[
        F_\theta(0)=0,
        \qquad
        F_\theta'(0)=\theta>0,
        \qquad
        F_\theta''(0)=\theta^2>0,
\]
and $F_\theta'''(x)=\theta^3e^{\theta x}\ge0$ for every $x\ge0$. Applying Theorem~\ref{thm:lap-signless-master} to \(F_\theta\), and then adding the common constant \(n\) to both sides, gives the two asserted inequalities.
\end{proof}

We next record the corresponding comparison for the Laplacian and signless Laplacian resolvent energies considered by Sun and Das~\cite{SunDas2019}. Here the proof uses the power-sum comparison in Corollary~\ref{cor:lap-signless-powers}.

\begin{corollary}\label{cor:lap-signless-resolvent}
Let \(G\) be a connected graph on \(n\) vertices.  Define
\[
        RL(G):=\sum_{i=1}^n\frac1{n+1-\lambda_i(L(G))},
        \qquad
        RQ(G):=\sum_{i=1}^n\frac1{2n-1-\lambda_i(Q(G))}.
\]
Then
\[
        RL(G)\ge RL(P_n),
        \qquad
        RQ(G)\ge RQ(P_n).
\]
\end{corollary}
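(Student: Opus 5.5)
The plan is to expand each resolvent term as a geometric series in the eigenvalue and then apply the power-sum comparison of Corollary~\ref{cor:lap-signless-powers} term by term. To make the series converge, we first need the spectral bounds
\[
\lambda_i(L(G))\le n<n+1
\qquad\text{and}\qquad
\lambda_i(Q(G))\le 2(n-1)<2n-1 .
\]
These are standard. The first holds because the Laplacian spectral radius of a graph on $n$ vertices is at most $n$; one way to see this is that $L(G)+L(\overline G)=nI-J$, which has largest eigenvalue $n$, and $L(\overline G)\succeq0$. The second follows from $\lambda_1(Q(G))\le 2\Delta(G)\le 2(n-1)$, since the row sums of $Q$ equal $2d_v$. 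Hence, with $c=n+1$ for $L$ and $c=2n-1$ for $Q$, every eigenvalue lies in $[0,c)$. The same bounds hold for $P_n$.

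We then expand
\[
\frac1{c-\lambda}=\sum_{k\ge0}\frac{\lambda^k}{c^{k+1}},
\]
which converges for $0\le\lambda<c$. This gives
\[
RL(G)=\sum_{k\ge0}\frac1{(n+1)^{k+1}}\sum_{i=1}^n\lambda_i(L(G))^k,
\]
and the analogous identity for $RQ(G)$ with $c=2n-1$. All terms are nonnegative, so summing over $i$ and $k$ in either order is justified by Tonelli's theorem.

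Next we compare term by term. For $k=0$ both sides equal $n$. For every integer $k\ge1$, Corollary~\ref{cor:lap-signless-powers} with $\alpha=k$ gives
\[
\sum_i\lambda_i(L(G))^k\ge\sum_i\lambda_i(L(P_n))^k,
\]
and likewise for $Q$. Multiplying by the positive weights $c^{-(k+1)}$ and summing over $k$ gives $RL(G)\ge RL(P_n)$ and $RQ(G)\ge RQ(P_n)$. The constant $c$ depends only on $n$, so the same expansion is valid for $G$ and for $P_n$ at once.

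The only step that needs care is the spectral radius bound. It ensures the denominators are positive and the series converges. The rest is the same argument for both matrices, applied term by term.
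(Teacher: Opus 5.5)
Your proposal is correct and follows essentially the same route as the paper: expand each resolvent term as a geometric series, which converges by the spectral bounds $\lambda_i(L(G))\le n$ and $\lambda_i(Q(G))\le 2n-2$, and then compare term by term using Corollary~\ref{cor:lap-signless-powers} with $\alpha=k$. Your explicit proofs of those spectral bounds, via $L(G)+L(\overline G)=nI-J$ and the row-sum bound $2\Delta(G)$, supply the details the paper simply cites as standard.
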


\begin{proof}
We use Corollary~\ref{cor:lap-signless-powers}. The standard spectral bounds
\[
        \lambda_i(L(G))\le n,
        \qquad
        \lambda_i(Q(G))\le 2n-2
\]
imply
\[
        0\le \frac{\lambda_i(L(G))}{n+1}<1,
        \qquad
        0\le \frac{\lambda_i(Q(G))}{2n-1}<1.
\]
Hence the following geometric-series expansions converge absolutely:
\[
\begin{aligned}
        RL(G)
        &=\sum_{i=1}^n\frac1{n+1-\lambda_i(L(G))}  \\
        &=\frac{n}{n+1}
          +\sum_{k\ge1}\frac1{(n+1)^{k+1}}
            \sum_{i=1}^n \lambda_i(L(G))^k .
\end{aligned}
\]
The same expansion holds for \(P_n\).  Comparing term by term, using
Corollary~\ref{cor:lap-signless-powers} with \(\alpha=k\), gives $RL(G)\ge RL(P_n)$. Similarly,
\[
        RQ(G)
        =
        \frac{n}{2n-1}
        +\sum_{k\ge1}\frac1{(2n-1)^{k+1}}
          \sum_{i=1}^n \lambda_i(Q(G))^k,
\]
and the same termwise comparison gives $RQ(G)\ge RQ(P_n)$.
\end{proof}

The preceding stop-loss comparison is closely related in spirit to the Radenkovi\'c--Gutman conjecture on Laplacian energy~\cite{RadenkovicGutman2007}, but it does not settle that conjecture.

\begin{remark}\label{rem:rg-one-order-short}
Recall that the Laplacian energy of a graph \(G\) is
\[
        LE(G)=\sum_{i=1}^n\left|\lambda_i(L(G))-\bar d\right|,
        \qquad
        \bar d=\frac{2|E(G)|}{n}.
\]
Radenkovi\'c and Gutman conjectured, in particular, that \(P_n\) minimizes \(LE\) among all trees on \(n\) vertices~\cite{RadenkovicGutman2007}; see also the recent partial results in~\cite{GanieRatherPirzada2022}.  For trees on \(n\) vertices, the average degree is fixed: $\bar d=2(n-1)/n$. Moreover, $\sum_{i=1}^n\lambda_i(L(T))=n\bar d$. Hence the positive and negative deviations of the numbers \(\lambda_i(L(T))-\bar d\) have the same total mass, and therefore
\[
        LE(T)
        =
        2\sum_{i=1}^n\bigl(\lambda_i(L(T))-\bar d\bigr)_+.
\]
Thus the conjectured path-minimality of Laplacian energy would require the
first-order stop-loss inequality
\[
        \sum_{i=1}^n\bigl(\lambda_i(L(T))-\bar d\bigr)_+
        \ge
        \sum_{i=1}^n\bigl(\lambda_i(L(P_n))-\bar d\bigr)_+ .
\]
Our result in Remark~\ref{rem:laplacian-stoploss-form} gives the second-order comparison
\[
        \sum_{i=1}^n\bigl(\lambda_i(L(T))-t\bigr)_+^2
        \ge
        \sum_{i=1}^n\bigl(\lambda_i(L(P_n))-t\bigr)_+^2
        \qquad(t\ge0),
\]
which in general does not imply the corresponding first-order comparison.
Thus we do not claim a resolution of the Radenkovi\'c--Gutman conjecture here.
\end{remark}

The preceding theorem and the tree-level stop-loss comparison also give
thresholded Laplacian and signless Laplacian tail inequalities.

\begin{corollary}\label{cor:thresholded-lap-signless}
Let \(G\) be a connected graph on \(n\) vertices.  Then, for every \(a\ge0\)
and every real number \(p\ge2\),
\[
        \sum_{i=1}^n \bigl(\lambda_i(L(G))-a\bigr)_+^p
        \ge
        \sum_{i=1}^n \bigl(\lambda_i(L(P_n))-a\bigr)_+^p,
\]
and
\[
        \sum_{i=1}^n \bigl(\lambda_i(Q(G))-a\bigr)_+^p
        \ge
        \sum_{i=1}^n \bigl(\lambda_i(Q(P_n))-a\bigr)_+^p .
\]
\end{corollary}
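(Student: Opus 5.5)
The plan is to reduce the statement to the tree case and then to the second-order stop-loss comparison of Remark~\ref{rem:laplacian-stoploss-form}, by integrating over the threshold.

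\emph{Reduction to trees.} Let \(T\) be a spanning tree of \(G\). As in the proof of Theorem~\ref{thm:lap-signless-master}, we have \(L(G)\succeq L(T)\) and \(Q(G)\succeq Q(T)\), so \(\lambda_i(L(G))\ge\lambda_i(L(T))\) and \(\lambda_i(Q(G))\ge\lambda_i(Q(T))\) for every \(i\). Since \(x\mapsto(x-a)_+^p\) is nondecreasing, it suffices to prove both inequalities for \(T\). Moreover, \(L(T)\) and \(Q(T)\) are cospectral, so one inequality suffices.

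\emph{Integral representation.} For \(p\ge2\) and \(x\ge0\), I will write
\[
(x-a)_+^p=\frac{p(p-1)}{2}\int_a^\infty (t-a)^{p-2}(x-t)_+^2\,dt .
\]
This can be checked by substituting \(s=t-a\), which gives \(\int_0^{y}s^{p-2}(y-s)^2\,ds=\frac{2}{p(p-1)(p+1)}\,y^{p+1}\) with \(y=x-a\). That identity is off by one power: the correct kernel has exponent \(p-3\), not \(p-2\). Indeed
\[
\int_0^y s^{p-3}(y-s)^2\,ds=\frac{2\,y^p}{(p-2)(p-1)p},
\]
so
\[
(x-a)_+^p=\frac{p(p-1)(p-2)}{2}\int_a^\infty (t-a)^{p-3}(x-t)_+^2\,dt\qquad(p>2),
\]
and the kernel \((t-a)^{p-3}\) is integrable near \(a\) because \(p>2\). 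The case \(p=2\) is literally the stop-loss inequality at \(t=a\).

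\emph{Conclusion for \(p>2\).} Summing the representation over the eigenvalues of \(L(T)\) and of \(L(P_n)\), and using Remark~\ref{rem:laplacian-stoploss-form} for each \(t\ge a\ge0\) together with the nonnegativity of the kernel, gives the inequality for \(T\). Tonelli justifies exchanging the finite sum with the integral, since all integrands are nonnegative.

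\emph{Alternative via admissibility.} One could instead try to check that \(F(x)=(x-a)_+^p\) is admissible third-order convex and apply Theorem~\ref{thm:lap-signless-master}. This fails for \(2<p<3\), because \(F'''\) is negative-infinite-free but \(F''\) is not absolutely continuous issues aside, and in any case \(F'''\) changes behaviour at \(a\). So the stop-loss integration route is the one to commit to.

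\emph{Main obstacle.} The only delicate point is getting the kernel exponent right, together with the integrability and nonnegativity needed to integrate the inequality. Everything else follows directly from Remark~\ref{rem:laplacian-stoploss-form} and the spanning-tree monotonicity.
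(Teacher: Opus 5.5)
Your proof is correct once cleaned up. For \(p=2\) it coincides with the paper's: spanning-tree monotonicity plus Remark~\ref{rem:laplacian-stoploss-form} at \(t=a\).

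For \(p>2\) you take a different route. You integrate the tree-level stop-loss comparison against the kernel \(\frac{p(p-1)(p-2)}{2}(t-a)^{p-3}\) over \(t\ge a\), which is exactly the device the paper uses later in Theorem~\ref{thm:line-graph-positive-energy}. The paper instead observes that \(F_{a,p}(x)=(x-a)_+^p\) is admissible third-order convex and invokes Theorem~\ref{thm:lap-signless-master}. The two are essentially equivalent in content, since the admissible-function comparison amounts to the stop-loss comparison integrated against \(F'''\). Your version needs only the stop-loss input and avoids checking regularity hypotheses. The paper's is a one-line application of a theorem it already has.

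Two corrections to the write-up. First, your dismissal of the admissibility route is wrong and should be deleted, since \(F_{a,p}\) is admissible for every \(p>2\) and \(a\ge0\):
\begin{itemize}
\item \(F_{a,p}\in C^2([0,\infty))\), with \(F_{a,p}(0)=0\) and \(F_{a,p}'(0),F_{a,p}''(0)\ge0\).
\item \(F_{a,p}''(x)=p(p-1)(x-a)_+^{p-2}\) is locally absolutely continuous. It is the integral of the function equal to \(p(p-1)(p-2)(s-a)^{p-3}\) for \(s>a\) and \(0\) for \(s<a\), which is locally integrable because \(p-3>-1\).
\item \(F_{a,p}'''\ge0\) almost everywhere.
\end{itemize}
This is the same integrability fact that makes your own kernel work. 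Second, remove the first representation with exponent \(p-2\) and state only the corrected one with exponent \(p-3\).
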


\begin{proof}
We first consider the endpoint \(p=2\).  Choose a spanning tree \(T\) of
\(G\).  As in the proof of Theorem~\ref{thm:lap-signless-master}, we know that
\[
        \lambda_i(L(G))\ge \lambda_i(L(T)),
        \qquad
        \lambda_i(Q(G))\ge \lambda_i(Q(T))
        \qquad(1\le i\le n).
\]
Since \(x\mapsto (x-a)_+^2\) is increasing on \([0,\infty)\), and since
Remark~\ref{rem:laplacian-stoploss-form} gives the tree-level second-order
stop-loss comparison, we obtain
\[
        \sum_{i=1}^n \bigl(\lambda_i(L(G))-a\bigr)_+^2
        \ge
        \sum_{i=1}^n \bigl(\lambda_i(L(P_n))-a\bigr)_+^2,
\]
and
\[
        \sum_{i=1}^n \bigl(\lambda_i(Q(G))-a\bigr)_+^2
        \ge
        \sum_{i=1}^n \bigl(\lambda_i(Q(P_n))-a\bigr)_+^2.
\]

Now assume \(p>2\). Define $F_{a,p}(x):=(x-a)_+^p$ for every $x\ge0$. It is easy to check that \(F_{a,p}\) is admissible third-order convex. Applying Theorem~\ref{thm:lap-signless-master} to \(F_{a,p}\) gives the two desired inequalities for \(p>2\).
\end{proof}

\subsection{Edge-count signless Laplacian comparison and line graphs}

The thresholded signless Laplacian comparison above is closely related to
positive \(p\)-energies of line graphs.  Recall that the \emph{line graph}
\(\mathcal L(G)\) has vertex set \(E(G)\), with two vertices adjacent in
\(\mathcal L(G)\) precisely when the corresponding edges of \(G\) share an
endpoint.  We write \(\mathcal L(G)\) in order to avoid confusion with the
Laplacian matrix \(L(G)\).

Let \(N_G\) be the unsigned incidence matrix of \(G\).  Then
\[
        N_GN_G^\top=Q(G),
        \qquad
        N_G^\top N_G=2I+A(\mathcal L(G)).
\]
The two matrices \(N_GN_G^\top\) and \(N_G^\top N_G\) have the same nonzero
eigenvalues.  Hence the positive adjacency eigenvalues of \(\mathcal L(G)\)
are exactly the positive numbers among \(\lambda_i(Q(G))-2\).  Equivalently,
for every \(p>0\),
\[
        \mathcal E_p^+(\mathcal L(G))
        =
        \sum_i\bigl(\lambda_i(Q(G))-2\bigr)_+^p .
\]
Thus, in order to obtain the sharp comparison with the path on \(|E(G)|\)
vertices, it is natural to prove an edge-count version of the signless
Laplacian stop-loss comparison above the threshold \(2\).  The deletion method
used in the proof of Theorem~\ref{thm:bipartite-stoploss} gives such a
comparison.

For a graph \(G\) and \(t\ge2\), put
\[
        \Psi_t(G):=\tr\bigl(Q(G)-tI\bigr)_+^2
        =\sum_i\bigl(\lambda_i(Q(G))-t\bigr)_+^2 .
\]
We shall also use the spectral-shift interval notation recalled in Section~\ref{sec:inputs}. Thus, for a rank-one update \(M\rightsquigarrow M+bb^\top\) with \(M\succeq0\), the interval set \(E(M,b)\), formed from the interlacing eigenvalue gaps, satisfies
\[
        \tr(M+bb^\top-tI)_+^2-\tr(M-tI)_+^2
        =2J_{E(M,b)}(t),
        \quad
        \text{where }
        J_E(t)=\int_E(y-t)_+\,dy .
\]

The first local estimate controls the endpoint update for paths.

\begin{lemma}\label{lem:lg-path-endpoint-envelope}
Let \(m\ge1\), and let \(F_m\) be the spectral-shift interval set for the rank-one update
\[
        Q(P_m)\oplus[0]
        \rightsquigarrow
        Q(P_{m+1}),
\]
obtained by adding one pendant edge to the path \(P_m\).  Then
\(F_m\subset[0,4]\), the total length of \(F_m\) is \(2\), and for every
integer \(r\ge1\),
\begin{equation}\label{eq:lg-path-endpoint-moment}
        \int_{F_m} y^r\,dy
        \le
        \frac1{r+1}\binom{2r+2}{r+1}.
\end{equation}
Moreover, with
\[
        A_*(t):=2140-7t^4,
        \qquad
        B_*(t):=4414-4t^4,
        \qquad
        K_*(t):=5(t^3+5t^2+25t+125),
\]
\[
        L_{\rm av}(t):=\frac12\left(\frac72-t\right)^2,
        \qquad
        L_{5,*}(t):=
        \frac{\frac12A_*(t)^2B_*(t)^2+1000A_*(t)^3}{B_*(t)^3K_*(t)},
\]
one has
\[
        J_{F_m}(t)\le L_{\rm av}(t)\qquad(2\le t\le3),
\]
and
\[
        J_{F_m}(t)\le L_{5,*}(t)\qquad(3\le t\le4).
\]
\end{lemma}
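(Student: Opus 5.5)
The plan is to identify the update $Q(P_m)\oplus[0]\rightsquigarrow Q(P_{m+1})$ with two consecutive bipartite path endpoint increments, so that the moment bound \eqref{eq:lg-path-endpoint-moment} follows from Lemma~\ref{lem:path-moments}. The envelope bounds would then come from explicit polynomial majorants of $y\mapsto(y-t)_+$ on $[0,4]$.

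\emph{Structural facts.} Let $N$ be the unsigned incidence matrix of $P_{m+1}$. If the pendant edge $e=uv$ is the last column, then $Q(P_{m+1})=(Q(P_m)\oplus[0])+bb^\top$ with $b=e_u+e_v$. The total length of $F_m$ equals the trace increase,
\[
\tr Q(P_{m+1})-\tr Q(P_m)=\|b\|^2=2 .
\]
The spectrum of $Q(P_n)$, which is cospectral with $L(P_n)$, is $\{2+2\cos(k\pi/n):1\le k\le n\}\subset[0,4]$, so $F_m\subset[0,4]$. Rank-one interlacing then makes $F_m$ the explicit union
\[
F_m=\bigcup_{k}\Bigl[\,2+2\cos\tfrac{k\pi}{m},\ 2+2\cos\tfrac{k\pi}{m+1}\Bigr],
\]
which I would record for later use.

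\emph{Moments.} For every interval set arising from a rank-one update,
\[
\int_{E(M,b)}y^{r}\,dy=\frac{\tr(M+bb^\top)^{r+1}-\tr M^{r+1}}{r+1}.
\]
Hence $\int_{F_m}y^r\,dy=\bigl(\tr Q(P_{m+1})^{r+1}-\tr Q(P_m)^{r+1}\bigr)/(r+1)$. Since $Q(P_j)=N_{P_j}N_{P_j}^\top$ and the biadjacency matrix of $S(P_j)=P_{2j-1}$ is $N_{P_j}$, we have $\tr Q(P_j)^{k}=\sum_i\mu_i(P_{2j-1})^k$. The difference therefore telescopes through the two endpoint increments $P_{2m-1}\rightsquigarrow P_{2m}\rightsquigarrow P_{2m+1}$. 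Each of these has $\sum\mu^{k}$-increment $k\int_{E^P_n}y^{k-1}dy$, and by Lemma~\ref{lem:path-moments} with exponent $k=r+1$ this is at most $\frac12\binom{2r+2}{r+1}$. Summing the two increments and dividing by $r+1$ gives \eqref{eq:lg-path-endpoint-moment}.

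\emph{Envelopes.} For fixed $t$, pick a polynomial $q_t(y)=\sum_j c_j(t)y^j$ with $c_j(t)\ge0$ for $j\ge1$ and $q_t(y)\ge(y-t)_+$ on $[0,4]$. Then
\[
J_{F_m}(t)\le \int_{F_m} q_t(y)\,dy\le 2c_0(t)+\sum_{j\ge1}c_j(t)\,\frac1{j+1}\binom{2j+2}{j+1}.
\]
If $c_0(t)<0$, the constant term is handled exactly because the total length of $F_m$ is $2$. I would use the following choices.
\begin{itemize}[label=--]
\item On $[2,3]$, take a quadratic or low-degree majorant tangent to $(y-t)_+$ near $y=4$ and touching $0$ near the lower end. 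Then check that the resulting bound is at most $L_{\rm av}(t)=\frac12(\frac72-t)^2$. Since the target is a polynomial in $t$, this reduces to a one-variable polynomial inequality.
\item On $[3,4]$, the form of $L_{5,*}$ (the degree $5$ and the factor $K_*(t)=5(t^5-5^5)/(t-5)$) suggests a majorant built from $y^5$ and lower powers, possibly through Lemma~\ref{lem:path-moment-envelope} with $r=5$ combined with a tangent-line correction. The parameters $A_*,B_*$ are rational approximations to the optimal coefficients, and the inequality is closed by the exact rational checks of Appendix~\ref{app:linegraph-scalar-check}.
\end{itemize}

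The moment bound can be obtained for all $r\ge1$ but used only for finitely many $r$, and small $m$, where the asymptotic envelope may be slightly off, can be checked directly from the explicit cosine description of $F_m$.

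\emph{Main obstacle.} The hard part is the envelope step: finding certificate polynomials whose moment-based bound actually lies below $L_{\rm av}$ and $L_{5,*}$ uniformly in $t$. The bounds are tight enough that crude choices, such as Lemma~\ref{lem:spectral-shift-packing}, which only gives $\frac12(4-t)^2$, fail. I would first try tangent constructions at the two kinks and verify the resulting scalar inequalities by exact rational computation.
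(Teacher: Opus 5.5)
Your structural facts and the moment bound are correct. Telescoping $\tr Q(P_{m+1})^{r+1}-\tr Q(P_m)^{r+1}$ through the two bipartite endpoint increments $P_{2m-1}\rightsquigarrow P_{2m}\rightsquigarrow P_{2m+1}$ and applying Lemma~\ref{lem:path-moments} twice is a valid substitute for the paper's direct count of closed walks through a new endpoint with a union bound.

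The gap is the envelope step, which is the real content of the lemma. You give no certificate, and the one you sketch for $[2,3]$ cannot work.

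First, the sign constraints rule out the shape you describe. Since you only have \emph{upper} bounds on moments, you need $c_j\ge0$ for $j\ge1$. Also $q(0)\ge(0-t)_+=0$ forces $c_0\ge0$, so the case $c_0<0$ never arises. Hence $q$ is nondecreasing and positive on $(0,4]$, and it cannot touch $0$ near $y=t$.

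Second, the degree you propose is quantitatively too low. Put $M_j=\frac1{j+1}\binom{2j+2}{j+1}$, so $M_0=2$. Take any admissible $q$ of degree at most $d$, any $y_0\in[t,4]$, and set $w=\min_{0\le j\le d}M_j/y_0^j$. Then the bound your certificate yields satisfies
\[
\sum_{j}c_jM_j\ \ge\ w\,q(y_0)\ \ge\ w\,(y_0-t).
\]
For $d=2$ and $y_0=4$ one gets $w=5/12$. So every quadratic certificate gives at least $\frac5{12}(4-t)$, which already exceeds $L_{\rm av}(t)$ at $t=5/2$ (since $5/8>1/2$) and at $t=3$ (since $5/12>1/8$). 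For $d=7$, $t=3$, $y_0=7/2$, the ratios $M_j/(7/2)^j$ decrease in $j$. Hence $w=M_7/(7/2)^7$ and $w/2=102960/823543>1/8$, so no nonnegative-coefficient certificate of degree at most $7$ works at $t=3$.

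The missing idea is the paper's. Use the single high-degree monomial envelope
\[
J_{F_m}(t)\le\Gamma_r(t)\int_{F_m}y^r\,dy\le\Gamma_r(t)M_r
\]
from Lemma~\ref{lem:path-moment-envelope}, and switch $r$ along subintervals:
\begin{itemize}
\item $r=3,5,8$ on $[2,\frac{12}5]$, $[\frac{12}5,\frac{14}5]$, $[\frac{14}5,3]$;
\item $r=10,30,30$ on $[3,\frac72]$, $[\frac72,\frac{58}{15}]$, $[\frac{58}{15},4]$.
\end{itemize}
Then certify the six resulting polynomial inequalities exactly by Bernstein coefficients, as in Appendix~\ref{app:linegraph-scalar-check}. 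The margin near $t=3$ is razor-thin: $\Gamma_8(3)M_8\approx0.1213$ against $L_{\rm av}(3)=1/8$.

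On $[3,4]$, your guess $r=5$ appears numerically to leave room against $L_{5,*}$, but it too must be certified. Two small corrections there. $L_{5,*}$ is just a target in this lemma; it comes from the support-edge gain in Lemma~\ref{lem:lg-support-edge}, not from a path majorant. Also $K_*(t)=5(5+t)(25+t^2)$, not $5(t^5-5^5)/(t-5)$.
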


\begin{proof}
The update \(Q(P_m)\oplus[0]\rightsquigarrow Q(P_{m+1})\) is obtained by
adding one pendant edge.  If the new edge joins the old endpoint \(u\) to the
new vertex \(w\), then
\[
        Q(P_{m+1})-\bigl(Q(P_m)\oplus[0]\bigr)
        =
        (e_u+e_w)(e_u+e_w)^\top .
\]
Thus the update has the form \(M\rightsquigarrow M+cc^\top\), with
\(\|c\|_2^2=2\).  The total length of the corresponding spectral-shift interval
set is therefore
\[
        |F_m|=\tr(M+cc^\top)-\tr(M)=\tr(cc^\top)=2.
\]
Since all signless Laplacian eigenvalues of paths lie in \([0,4]\), the
interlacing intervals also lie in \([0,4]\).  Hence \(F_m\subset[0,4]\).

Let \(\alpha_j\) and \(\beta_j\) be the eigenvalues before and after the update, paired according to rank-one interlacing. Then $F_m=\bigcup_j[\alpha_j,\beta_j]$. Therefore, for every integer \(r\ge1\),
\[
\begin{aligned}
        \int_{F_m}y^r\,dy
        &=
        \sum_j\int_{\alpha_j}^{\beta_j}y^r\,dy 
        =
        \frac1{r+1}\sum_j(\beta_j^{r+1}-\alpha_j^{r+1})\\
        &=
        \frac{\tr Q(P_{m+1})^{r+1}-\tr Q(P_m)^{r+1}}{r+1}.
\end{aligned}
\]

We now bound this trace difference.  Let \(N_k\) be the unsigned incidence matrix of \(P_k\). Then $Q(P_k)=N_kN_k^\top$, and the subdivision graph \(S(P_k)\) is \(P_{2k-1}\), with biadjacency matrix \(N_k\). Hence the nonzero adjacency eigenvalues of \(P_{2k-1}\) are \(\pm\sqrt{q}\), where \(q\) runs over the nonzero eigenvalues of \(Q(P_k)\). Consequently
\[
        2\tr Q(P_k)^{r+1}
        =
        \tr A(P_{2k-1})^{2r+2}.
\]
Thus
\[
2\left(\tr Q(P_{m+1})^{r+1}-\tr Q(P_m)^{r+1}\right) =
        \tr A(P_{2m+1})^{2r+2}
        -
        \tr A(P_{2m-1})^{2r+2}.
\]
Viewing \(P_{2m-1}\) as the central subpath of \(P_{2m+1}\), this difference
counts the closed walks of length \(2r+2\) in \(P_{2m+1}\) which visit at least
one of the two new endpoints.

For each fixed new endpoint, the step-sequence map injects the closed walks
visiting that endpoint into the balanced \(\{\pm1\}\)-sequences of length
\(2r+2\), as in the proof of Lemma~\ref{lem:path-moments}. There are
\(\binom{2r+2}{r+1}\) such sequences. Taking a union bound over the two new endpoints gives
\[
        2\left(\tr Q(P_{m+1})^{r+1}-\tr Q(P_m)^{r+1}\right)
        \le
        2\binom{2r+2}{r+1},
\]
and the moment estimate \eqref{eq:lg-path-endpoint-moment} follows.

It remains to prove the two envelope bounds. We apply Lemma~\ref{lem:path-moment-envelope} to the interval set \(E=F_m\), using the moment constants supplied by \eqref{eq:lg-path-endpoint-moment}. On the intervals
\[
        [2,12/5],\qquad [12/5,14/5],\qquad [14/5,3],
\]
we use respectively the moments \(r=3,5,8\).  After clearing positive
denominators, the resulting inequalities are exactly the first three finite
inequalities verified in Appendix~\ref{app:linegraph-scalar-check}.  This
proves
\[
        J_{F_m}(t)\le L_{\rm av}(t)
        \qquad(2\le t\le3).
\]
On the intervals
\[
        [3,7/2],\qquad [7/2,58/15],\qquad [58/15,4],
\]
we use respectively the moments \(r=10,30,30\).  The corresponding exact
Bernstein checks in Appendix~\ref{app:linegraph-scalar-check} prove
\[
        J_{F_m}(t)\le L_{5,*}(t)
        \qquad(3\le t\le4).
\]
This completes the proof.
\end{proof}

The second local estimate compares a degree-\(3\)--degree-\(2\) non-bridge edge with the endpoint update for paths.

\begin{lemma}\label{lem:lg-support-edge}
Let \(G\) be a connected graph such that \(\Delta(G)\le3\) and no two
degree-\(3\) vertices are adjacent.  Let \(e=au\) be a non-bridge edge with
\(d_G(a)=3\) and \(d_G(u)=2\).  Put \(H=G-e\), and let \(m=|E(G)|\).  Then,
for every \(t\ge2\),
\[
        \Psi_t(G)-\Psi_t(H)
        \ge
        \Psi_t(P_{m+1})-\Psi_t(P_m).
\]
\end{lemma}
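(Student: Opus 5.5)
Both sides are rank-one spectral-shift increments, so the plan is to compare their interval sets. Adding $e=au$ to $H$ gives $Q(G)=Q(H)+cc^\top$ with $c=e_a+e_u$. Hence
\[
\Psi_t(G)-\Psi_t(H)=2J_{E}(t),\qquad E:=E(Q(H),c),
\]
and $|E|=\|c\|^2=2$. By Lemma~\ref{lem:lg-path-endpoint-envelope}, the right-hand side equals $2J_{F_m}(t)$ with $|F_m|=2$ and $F_m\subset[0,4]$. So it suffices to show $J_E(t)\ge J_{F_m}(t)$ for $t\ge2$. The strategy is to lower-bound $J_E$ by explicit functions that dominate the path envelopes $L_{\rm av}$ on $[2,3]$ and $L_{5,*}$ on $[3,4]$, and then to handle $t\ge4$.

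For the lower bound on $J_E$, I would use the integral representation
\[
J_E(t)=\int_0^1 c^\top\bigl(Q(H)+\theta cc^\top-tI\bigr)_+c\,d\theta .
\]
Since $y\mapsto(y-t)_+$ is convex, Jensen applied to the spectral measure of $c/\|c\|$ for $M_\theta=Q(H)+\theta cc^\top$ gives
\[
c^\top(M_\theta-tI)_+c\ \ge\ 2\Bigl(\tfrac{c^\top M_\theta c}{2}-t\Bigr)_+ .
\]
Here $c^\top Q(H)c=d_H(a)+d_H(u)=2+1=3$ (no $au$ term, since the edge is deleted), so $c^\top M_\theta c=3+4\theta$. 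Integrating in $\theta$ yields
\[
J_E(t)\ge\int_0^1 2\Bigl(\tfrac{3}{2}+2\theta-t\Bigr)_+d\theta=\tfrac12\Bigl(\tfrac72-t\Bigr)_+^2,
\]
which is exactly $L_{\rm av}(t)$. This settles $2\le t\le3$ via Lemma~\ref{lem:lg-path-endpoint-envelope}, and it explains the name ``av''.

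For $3\le t\le4$, first-moment Jensen is too weak, and I would use higher spectral moments of $c$. The first step is to compute $c^\top M_\theta^k c$ for small $k$ (up to $k=2$ or $3$) from local structure. The hypotheses $\Delta\le3$, no adjacent degree-$3$ vertices, and $d(a)=3$, $d(u)=2$ pin down the closed-walk counts near $a,u$ up to a small number of local configurations, and $e$ being a non-bridge affects these counts only through short cycles. Then I would use a Chebyshev/Markov-type extremal bound: minimize $\int(y-t)_+d\mu$ over measures $\mu$ with the prescribed first few moments. The minimizer is a two- or three-atom measure, which produces a rational lower bound in $t$. The constants $A_*,B_*,K_*$ in $L_{5,*}$ are presumably designed to be matched by precisely such a moment-based bound, so the task is to verify that the resulting lower bound is at least $L_{5,*}(t)$ on $[3,4]$ using exact Bernstein checks. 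For $t\ge4$ we have $J_{F_m}(t)=0$ since $F_m\subset[0,4]$, and $J_E\ge0$, so nothing remains.

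The main obstacle is the range $[3,4]$. The moment data of $c$ depends on $\theta$ and on the local neighbourhood, and the non-bridge condition must be shown to only increase (or to not harmfully change) the relevant moments. I would first attempt to bound $c^\top M_\theta^2 c$ from below uniformly over the admissible local configurations and apply the two-moment extremal bound. If that is insufficient near $t=4$, I would add the third moment and do a finite case check over the possible degree patterns at distance two from $\{a,u\}$.
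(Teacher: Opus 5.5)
Your handling of $2\le t\le3$ (rank-one trace formula, then Jensen with $c^\top M_\theta c=3+4\theta$, which gives exactly $L_{\rm av}$) and of $t\ge4$ is the same as the paper's. The gap is in $3\le t\le4$, and it is a missing idea rather than a missing computation. Lower bounds on $c^\top M_\theta^k c$ for $k\ge2$ say nothing about $c^\top(M_\theta-tI)_+c$ unless you also have an a priori upper bound on $\spec(M_\theta)$. Without one, any such moment lower bounds can be met by adding a tiny mass $\varepsilon$ at a huge point $Y$ with $\varepsilon Y$ negligible. Then nothing beyond Jensen survives, and your two- or three-atom extremal measures only exist on a compact interval.

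The paper's key step is $\spec(M_\theta)\subset[0,5]$. One has $0\le M_\theta\le Q(G)$ entrywise, and the weight $w_x=1$ for $d_G(x)=3$, $w_x=2/3$ otherwise, satisfies $Q(G)w\le5w$. So the row-sum bound for $\operatorname{diag}(w)^{-1}M_\theta\operatorname{diag}(w)$ applies. This is exactly where $\Delta(G)\le3$ and the non-adjacency of degree-$3$ vertices are used; they do not pin down walk counts near $a,u$. Note that $K_*(t)=5(5+t)(25+t^2)=\max_{t<y\le5}(y^5-t^4y)/(y-t)$ already encodes both missing ingredients: the support bound $5$ and the fifth moment.

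Even with $\spec(M_\theta)\subset[0,5]$, moments of order $2$ or $3$ cannot reach $L_{5,*}$. Here $A_*(4)=348$, so $L_{5,*}(4)\approx1.03\times10^{-2}>0$. Take $G$ to be a long cycle through $au$ with one pendant vertex at $a$.

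Second moment: $c^\top M_\theta^2c=8+12\theta+8\theta^2\le4(3+4\theta)=4\,c^\top M_\theta c$ for $0\le\theta\le1$. So the first two moments are realized by a measure on $[0,4]$ (at $\theta=1$, by $\tfrac14\delta_0+\tfrac74\delta_4$), and any second-moment bound vanishes at $t=4$.

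Third moment: $c^\top M_\theta^3c=24+41\theta+36\theta^2+16\theta^3$. The minorant $(y-t)_+\ge(y^3-t^2y)/\bigl(5(5+t)\bigr)$ on $[0,5]$ gives at $t=4$ only $\frac1{45}\int_0^1\bigl(c^\top M_\theta^3c-16\,c^\top M_\theta c\bigr)_+d\theta\approx3\times10^{-3}$. This is sharp for constraints consisting of moment lower bounds of order at most $3$.

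So if you compare against $L_{5,*}$ as you propose, you must go to the fifth moment, as the paper does. Since $e$ is not a bridge, there is a shortest $u$--$a$ path $R$ in $H$, and the second $H$-neighbour $a'$ of $a$ lies off $R$. Entrywise monotonicity for the subgraph $R+aa'$ then gives $c^\top M_\theta^5c\ge q_*(\theta)$, using the stable case $\ell\ge6$ with $\ell=2,\dots,5$ checked exactly. Integrating in $\theta$ yields $L_{5,*}$. The alternative would be a new path-side envelope for $J_{F_m}$ that vanishes at $t=4$, which Lemma~\ref{lem:lg-path-endpoint-envelope} does not supply.
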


\begin{proof}
Let
\[
        b=e_a+e_u,
        \qquad
        M=Q(H),
        \qquad
        M_\theta=M+\theta bb^\top
        \quad(0\le\theta\le1).
\]
Since adding the edge \(au\) contributes
\((e_a+e_u)(e_a+e_u)^\top\) to the signless Laplacian, we have
\[
        Q(G)=M+bb^\top .
\]
By Lemma~\ref{lem:rank-one-trace-formula},
\begin{equation}\label{eq:lg-support-edge-trace}
        \frac12\bigl(\Psi_t(G)-\Psi_t(H)\bigr)
        =
        \int_0^1 b^\top(M_\theta-tI)_+b\,d\theta .
\end{equation}

Because \(H=G-e\), the vertices \(a\) and \(u\) are not adjacent in \(H\), and
\[
        d_H(a)=2,
        \qquad
        d_H(u)=1.
\]
Therefore
\[
        b^\top M b
        =
        b^\top Q(H)b
        =
        d_H(a)+d_H(u)
        =
        3,
\]
and hence
\[
        b^\top M_\theta b
        =
        b^\top M b+\theta\|b\|_2^4
        =
        3+4\theta.
\]
Applying Jensen's inequality to the convex function \(x\mapsto(x-t)_+\), we get
\[
        b^\top(M_\theta-tI)_+b
        \ge
        \|b\|_2^2
        \left(
        \frac{b^\top M_\theta b}{\|b\|_2^2}-t
        \right)_+  
        =
        2\left(\frac32+2\theta-t\right)_+ .
\]
Consequently, for \(2\le t\le3\),
\begin{equation}\label{eq:lg-support-edge-low-gain}
        \int_0^1 b^\top(M_\theta-tI)_+b\,d\theta
        \ge
        2\int_0^1\left(\frac32+2\theta-t\right)_+\,d\theta
        =
        L_{\rm av}(t).
\end{equation}

We next treat the range \(3\le t\le4\). Since \(0\le\theta\le1\), we have \(0\le M_\theta\le Q(G)\) entrywise. It is therefore enough to find a positive vector \(w\) such that \(Q(G)w\le5w\). Indeed, this gives \(M_\theta w\le5w\). If \(D=\operatorname{diag}(w)\), then \(D^{-1}M_\theta D\) is a non-negative matrix whose row sums are at most \(5\). Hence, by the standard row-sum bound for the spectral radius of a non-negative matrix~\cite[Theorem~8.1.22]{HornJohnson2012},
\[
        \rho(M_\theta)=\rho(D^{-1}M_\theta D)\le5.
\]

Now we define a positive vector \(w\) by
\[
        w_x=
        \begin{cases}
        1, & d_G(x)=3,\\
        2/3, & d_G(x)\le2.
        \end{cases}
\]
If \(d_G(x)=3\), then all neighbors of \(x\) have degree at most \(2\), by
the assumption that no two degree-\(3\) vertices are adjacent.  Hence
\[
        (Q(G)w)_x
        =
        d_G(x)w_x+\sum_{y\sim x}w_y
        =
        3\cdot1+3\cdot\frac23
        =
        5
        =
        5w_x.
\]
If \(d_G(x)\le2\), then \(w_x=2/3\), and every neighbor of \(x\) has weight at most \(1\).  Therefore
\[
        (Q(G)w)_x
        =
        d_G(x)w_x+\sum_{y\sim x}w_y
        \le
        d_G(x)\frac23+d_G(x)
        =
        \frac53d_G(x)
        \le
        \frac{10}{3}
        =
        5w_x.
\]
Thus \(Q(G)w\le5w\). Hence $\rho(M_\theta)\le5$. Since \(M_\theta\succeq0\), all eigenvalues of \(M_\theta\) are non-negative. Consequently,
\[
        \spec(M_\theta)\subset[0,5]
        \qquad(0\le\theta\le1).
\]

Let \(R\) be a shortest path from \(u\) to \(a\) in \(H\), and let \(v\) be the
predecessor of \(a\) on \(R\).  Since \(d_H(a)=2\), the vertex \(a\) has a
second neighbor in \(H\); denote it by \(a'\).  By the shortestness of \(R\),
we have \(a'\notin V(R)\), for otherwise the path from \(u\) to \(a'\) along
\(R\), followed by the edge \(a'a\), would give a shorter path from \(u\) to
\(a\).  Thus the subgraph \(S\) consisting of \(R\) together with the edge
\(aa'\) is a path with one additional leaf attached at \(a\), viewed relative
to the path \(R\).

We compare the fifth moment for this local subgraph with the fifth moment in
\(H\).  Embed \(Q(S)\) into the vertex space of \(H\) by adding zero rows and
columns outside \(S\).  Then
\[
        Q(S)+\theta bb^\top
        \le
        Q(H)+\theta bb^\top=M_\theta
\]
entrywise.  Since both matrices are entrywise non-negative, entrywise
domination is preserved under taking positive integer powers.  Therefore $\bigl(Q(S)+\theta bb^\top\bigr)^5
        \le
        M_\theta^5$ entrywise, and pairing with the non-negative vector \(b\) gives
\[
        b^\top M_\theta^5b
        \ge
        b^\top\bigl(Q(S)+\theta bb^\top\bigr)^5b .
\]

Let \(\ell\) be the length of \(R\), and let \(S_\ell\) denote the local path-plus-leaf graph consisting of the path
\[
        u=x_0-x_1-\cdots-x_\ell=a
\]
together with the extra leaf edge \(aa'\).  Put \(Q_\ell=Q(S_\ell)\) and \(P=bb^\top\). Then we know that
\[
        b^\top M_\theta^5b
        \ge
        b^\top(Q_\ell+\theta P)^5b .
\]
The latter quantity is a polynomial in \(\theta\).  If
\[
        s_j(\ell):=b^\top Q_\ell^j b,
        \qquad 0\le j\le5,
\]
then the word expansion of \((Q_\ell+\theta P)^5\), using \(P=bb^\top\), gives
\[
        b^\top(Q_\ell+\theta P)^5b
        =
        \sum_{m=0}^5\theta^m
        \sum_{\substack{r_0,\ldots,r_m\ge0\\
        r_0+\cdots+r_m=5-m}}
        \prod_{i=0}^m s_{r_i}(\ell).
\]
For the stable local model \(\ell=6\), a direct multiplication gives
\[
        (s_0(6),s_1(6),s_2(6),s_3(6),s_4(6),s_5(6))
        =
        (2,3,8,24,76,249).
\]
Substituting these values into the preceding formula gives
\[
        b^\top(Q_6+\theta P)^5b
        =
        64\theta^5+240\theta^4+472\theta^3
        +603\theta^2+512\theta+249.
\]
Only walks of length at most five contribute to this fifth moment, so the value is stable for all \(\ell\ge6\). For the remaining cases \(\ell=2,3,4,5\), the exact finite check in Appendix~\ref{app:linegraph-scalar-check} shows that $b^\top(Q_\ell+\theta P)^5b-q_*(\theta)$ has non-negative coefficients. Therefore, for every possible length of \(R\),
\[
        b^\top M_\theta^5b\ge q_*(\theta),
\]
where
\begin{equation}\label{eq:lg-support-edge-fifth-moment}
        q_*(\theta)
        =
        64\theta^5+240\theta^4+472\theta^3
        +603\theta^2+512\theta+249.
\end{equation}

For \(3\le t\le4\) and \(0\le y\le5\), we claim that
\[
        (y-t)_+
        \ge
        \frac{y^5-t^4y}{K_*(t)}.
\]
Indeed, the right-hand side is non-positive for \(y\le t\).  If \(t<y\le5\),
then
\[
        \frac{y^5-t^4y}{y-t}
        =
        y(y+t)(y^2+t^2),
\]
and the last expression is increasing in \(y\ge0\), hence is at most $5(5+t)(25+t^2)=K_*(t)$. Since \(\spec(M_\theta)\subset[0,5]\), functional calculus gives
\[
        (M_\theta-tI)_+
        \succeq
        \frac{M_\theta^5-t^4M_\theta}{K_*(t)}.
\]
Pairing with \(b\), using \(b^\top M_\theta b=3+4\theta\), and then using~\eqref{eq:lg-support-edge-fifth-moment}, we obtain
\[
        b^\top(M_\theta-tI)_+b
        \ge
        \frac{\bigl[q_*(\theta)-t^4(3+4\theta)\bigr]_+}{K_*(t)}.
\]
Here we also used the fact that the left-hand side is non-negative.

Therefore, by \eqref{eq:lg-support-edge-trace},
\[
        \frac12\bigl(\Psi_t(G)-\Psi_t(H)\bigr)
        \ge
        \frac1{K_*(t)}
        \int_0^1
        \bigl[q_*(\theta)-t^4(3+4\theta)\bigr]_+\,d\theta .
\]
Writing \(z=1-\theta\), the polynomial inside the positive part is
\[
        A_*(t)-B_*(t)z+4099z^2-2072z^3+560z^4-64z^5 .
\]
Let \(h(t):=A_*(t)/B_*(t)\).  On \(3\le t\le4\), we have
\(A_*(t)>0\), \(B_*(t)>0\), and
\[
h(t)\le h_{\max}:=\frac{1573}{4090}.
\]
The exact verification of this bound is recorded in
Appendix~\ref{app:linegraph-scalar-check}.  Since the function
\(z\mapsto 4099-2072z-64z^3\) is decreasing on \([0,\infty)\), the rational
inequality
\[
4099-2072h_{\max}-64h_{\max}^3>3000
\]
implies that, for \(0\le z\le h(t)\),
\[
4099z^2-2072z^3+560z^4-64z^5
\ge
3000z^2 .
\]

Therefore
\[
\begin{aligned}
        \int_0^1
        \bigl[q_*(\theta)-t^4(3+4\theta)\bigr]_+\,d\theta
        &\ge
        \int_0^{A_*/B_*}
        \bigl(A_*(t)-B_*(t)z+3000z^2\bigr)\,dz  \\
        &=
        \frac{A_*(t)^2}{2B_*(t)}
        +1000\frac{A_*(t)^3}{B_*(t)^3}.
\end{aligned}
\]
The exact algebra verifying \(h(t)\le1573/4090\) and the displayed rational lower bound is recorded in Appendix~\ref{app:linegraph-scalar-check}. Hence
\begin{equation}\label{eq:lg-support-edge-high-gain}
        \frac12\bigl(\Psi_t(G)-\Psi_t(H)\bigr)
        \ge
        L_{5,*}(t)
        \qquad(3\le t\le4).
\end{equation}

By Lemma~\ref{lem:lg-path-endpoint-envelope}, the path half-increment
\begin{equation}\label{eq:path-half-increment_v1}
        \frac12\bigl(\Psi_t(P_{m+1})-\Psi_t(P_m)\bigr)=J_{F_m}(t)
\end{equation}
is at most \(L_{\rm av}(t)\) for \(2\le t\le3\), and at most \(L_{5,*}(t)\) for \(3\le t\le4\). Combining these bounds with~\eqref{eq:lg-support-edge-trace}, \eqref{eq:lg-support-edge-low-gain}, and~\eqref{eq:lg-support-edge-high-gain}, we obtain
\[
        \Psi_t(G)-\Psi_t(H)
        \ge
        \Psi_t(P_{m+1})-\Psi_t(P_m)
\]
for \(2\le t\le4\).

Finally, if \(t\ge4\), then the signless Laplacian spectra of \(P_m\) and
\(P_{m+1}\) are contained in \([0,4]\), so
\[
        \Psi_t(P_{m+1})-\Psi_t(P_m)=0.
\]
The left-hand side is non-negative by the spectral-shift interval formula for
the positive rank-one update \(M\rightsquigarrow M+bb^\top\).  Hence the same
inequality holds for all \(t\ge4\), and the proof is complete.
\end{proof}

The terminal object in the deletion argument is a cycle. We record the comparison needed to exclude this terminal case.

\begin{lemma}\label{lem:lg-cycle-terminal}
For every \(m\ge3\) and every \(t\ge2\),
\[
        \Psi_t(C_m)\ge \Psi_t(P_{m+1}).
\]
\end{lemma}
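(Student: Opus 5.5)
The plan is to reduce the statement to a weak majorization between two explicitly known spectra and then check that majorization with trigonometric sums. The spectra are
\[
        \spec Q(C_m)=\{2+2\cos(2\pi j/m):0\le j\le m-1\},
        \qquad
        \spec Q(P_{m+1})=\{2+2\cos(\pi k/(m+1)):1\le k\le m+1\}.
\]
Equivalently, $S(C_m)=C_{2m}$ and $S(P_{m+1})=P_{2m+1}$, as in the subdivision identification used for Theorem~\ref{thm:lap-signless-master}.

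Sort the cycle values as $c_1\ge c_2\ge\cdots\ge c_m$. Then $c_1=4$ and $c_{2h}=c_{2h+1}=4\cos^2(h\pi/m)$, with one unpaired value at the bottom when $m$ is even. Sort the path values as $p_i=4\cos^2\bigl(i\pi/(2m+2)\bigr)$ for $1\le i\le m+1$, where $p_{m+1}=0$, and pad the cycle spectrum with a zero so both lists have length $m+1$. Both traces equal $2m$, the number of edges.

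The key reduction is the classical Tomić--Weyl (Karamata) principle. If $\sum_{i\le k}c_i\ge\sum_{i\le k}p_i$ for every $k$, that is, $c$ weakly submajorizes $p$, then $\sum_i f(c_i)\ge\sum_i f(p_i)$ for every convex nondecreasing $f$. Applied to $f(y)=(y-t)_+^2$ for each $t\ge2$, this gives $\Psi_t(C_m)\ge\Psi_t(P_{m+1})$, and in fact it gives the statement for all $t\ge0$. So everything comes down to the partial-sum inequalities $\sigma^C_k\ge\sigma^P_k$, where $\sigma^C_k=\sum_{i\le k}c_i$ and $\sigma^P_k=\sum_{i\le k}p_i$, for $1\le k\le m$; the case $k=m+1$ is equality of traces.

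Both partial sums have closed forms from the Dirichlet kernel:
\[
        \sigma^C_{2h+1}=4h+2+2\,\frac{\sin\bigl((2h+1)\pi/m\bigr)}{\sin(\pi/m)},
        \qquad
        \sigma^P_k=2k+2\,\frac{\sin\bigl(k\pi/(2m+2)\bigr)\cos\bigl((k+1)\pi/(2m+2)\bigr)}{\sin\bigl(\pi/(2m+2)\bigr)},
\]
and $\sigma^C_{2h}=\sigma^C_{2h+1}-c_{2h}$. Rather than compare these closed forms directly, I would proceed by pairs. The odd indices are the base: for $k=1$ we have $4\ge p_1$. Passing from $k=2h-1$ to $k=2h+1$ adds $2c_{2h}$ on the cycle side and $p_{2h}+p_{2h+1}$ on the path side. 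Here Lemma~\ref{lem:cycle-pair-ineq} should be the tool: its pairs of consecutive cosines $\cos^2((h-1)\pi/m)+\cos^2(h\pi/m)$ match a grouping of the cycle list as $(c_{2h-1},c_{2h})$. I would try regrouping the partial sums as $c_1+\sum_h(c_{2h}+c_{2h+1})$ versus a similar grouping on the path side, and find the right pairing so that the lemma's left and right sides appear term by term.

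The even indices are the main obstacle. At $k=2h$ the cycle has used only one copy of the value $4\cos^2(h\pi/m)$, while the path partial sums are concave in $k$. So the inequality at $k=2h$ does not follow by averaging the odd cases, and must be checked from the closed forms, or by a separate pairing $(c_{2h-1},c_{2h})$ against $(p_{2h-1},p_{2h})$ using Lemma~\ref{lem:cycle-pair-ineq} together with monotonicity of $\cos^2$. If the denominators $2m+1$ in that lemma do not match the path's $2m+2$, I would use the fact that $\cos^2(i\pi/(2m+2))\le\cos^2(i\pi/(2m+1))$ fails in the wrong direction. In that case the fallback is to verify $\sigma^C_k-\sigma^P_k\ge0$ directly from the closed forms, as a function of $x=k/m$, for large $m$, and to do an exact finite check for small $m$.
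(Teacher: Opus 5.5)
There is a genuine gap. Your reduction rests on a claim that is false: the padded spectrum of $Q(C_m)$ does not in general weakly submajorize that of $Q(P_{m+1})$.

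\begin{itemize}
\item For $m=3$ the sorted lists are $(4,1,1,0)$ and $(2+\sqrt2,\,2,\,2-\sqrt2,\,0)$. Hence $\sigma^C_2=5<4+\sqrt2=\sigma^P_2$.
\item For $m=4$ they are $(4,2,2,0,0)$ and $(2+2\cos(k\pi/5))_{k=1}^{5}$. Hence $\sigma^C_2=6<4+\sqrt5=\sigma^P_2$.
\item The inequality at $k=2$ also fails for $m=5,6,7$.
\end{itemize}

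Since the traces agree, your reduction would give $\sum_i f(c_i)\ge\sum_i f(p_i)$ for every convex $f$. In particular it would give $\sum_i(c_i-1)_+\ge\sum_i(p_i-1)_+$, which for $m=3$ reads $3\ge 2+\sqrt2$ and is false.

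So the even-index step you flag as the main obstacle is not merely hard but wrong. Re-pairing with Lemma~\ref{lem:cycle-pair-ineq} cannot rescue it, since that lemma's denominator $2m+1$ belongs to $P_{2m}$, not to $S(P_{m+1})=P_{2m+1}$. Your closed-form fallback cannot rescue it either, and your finite check would simply find counterexamples. Your extension to all $t\ge0$ is likewise unsupported by this route.

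The missing idea is to use the hypothesis $t\ge2$ \emph{before} majorizing.

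\begin{enumerate}
\item Put $x=t-2\ge0$. Use $Q(C_m)=2I+A(C_m)$, and the fact that $Q(P_{m+1})-2I$ has spectrum $\spec A(P_m)\cup\{-2\}$. The latter follows by comparing $NN^\top$ with $N^\top N=2I+A(P_m)$ for the incidence matrix $N$ of $P_{m+1}$.
\item The claim then becomes a comparison of $\sum_{\lambda>0}(\lambda-x)_+^2$ over the adjacency spectrum of $C_m$ with the same sum over the adjacency spectrum of $P_m$.
\item Even here the positive eigenvalues themselves are not weakly majorized: for $m=4$ one gets $2<\sqrt5$ at the second partial sum.
\item Their squares, however, are majorized. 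The sequence $c_i=4\cos^2(2\pi\lfloor i/2\rfloor/m)$ weakly submajorizes $p_i=4\cos^2(i\pi/(m+1))$. This follows from a pair comparison for the even partial sums and monotonicity of $\cos^2$ for the odd ones.
\item Finally apply the nondecreasing function $y\mapsto(\sqrt y-x)_+^2$, which is convex exactly because $x\ge0$.
\end{enumerate}

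Passing to squared positive parts weakens the majorization requirement enough to make it true. That is why the lemma is stated only for $t\ge2$.
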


\begin{proof}
Put \(x=t-2\ge0\). Since \(C_m\) is \(2\)-regular, $ Q(C_m)=2I+A(C_m)$. Therefore
\[
        \Psi_t(C_m)
        =
        \sum_i\bigl(\lambda_i(C_m)-x\bigr)_+^2,
\]
where \(\lambda_i(C_m)\) denotes the adjacency eigenvalues of \(C_m\).

We next identify the path side.  Let \(N\) be the unsigned incidence matrix of
\(P_{m+1}\).  Then
\[
        NN^\top=Q(P_{m+1}),
        \qquad
        N^\top N=2I+A(\mathcal L(P_{m+1}))=2I+A(P_m).
\]
The matrices \(NN^\top\) and \(N^\top N\) have the same nonzero eigenvalues. Thus the multiset
\[
\{\lambda_i(Q(P_{m+1}))-2\}
\] 
consists of the adjacency eigenvalues of \(P_m\), together with one additional value \(-2\). Since \(x\ge0\), this additional value contributes nothing to the positive part. Hence the desired inequality is equivalent to
\begin{equation}\label{eq:lg-cycle-terminal-adj-comparison}
        \sum_{\lambda_i(C_m)>0}\bigl(\lambda_i(C_m)-x\bigr)_+^2
        \ge
        \sum_{\lambda_i(P_m)>0}\bigl(\lambda_i(P_m)-x\bigr)_+^2 .
\end{equation}

Let \(c_1\ge c_2\ge\cdots\) and \(p_1\ge p_2\ge\cdots\) be the squared
positive adjacency eigenvalues of \(C_m\) and \(P_m\), respectively, padded by
zeroes.  We claim that $(c_i)\succ_w(p_i)$, that is,
\[
        \sum_{i=1}^k c_i\ge \sum_{i=1}^k p_i
        \qquad(k\ge1).
\]
Indeed, the adjacency eigenvalues of \(C_m\) are
\[
        2\cos\frac{2\pi j}{m},
        \qquad j=0,\ldots,m-1,
\]
and the adjacency eigenvalues of \(P_m\) are
\[
        2\cos\frac{i\pi}{m+1},
        \qquad i=1,\ldots,m.
\]
Thus, in the relevant positive range,
\[
        c_i=4\cos^2\frac{2\pi\lfloor i/2\rfloor}{m},
        \qquad
        p_i=4\cos^2\frac{i\pi}{m+1}.
\]

For the even partial sums, it is enough to compare the pairs
\((c_{2h-1},c_{2h})\) and \((p_{2h-1},p_{2h})\).  The required pair
comparison is
\[
\cos^2\frac{2(h-1)\pi}{m}
 +\cos^2\frac{2h\pi}{m}  \ge
 \cos^2\frac{(2h-1)\pi}{m+1}
 +\cos^2\frac{2h\pi}{m+1}.
\]
This is proved by the same angle argument as in Lemma~\ref{lem:cycle-pair-ineq}, with \(\alpha=2\pi/m\) and \(\beta=\pi/(m+1)\). Summing these pair inequalities gives all even partial-sum inequalities.

For the odd partial sums, after the preceding even partial sums it remains
only to compare the next term.  In the relevant range, $\frac{2h}{m}<\frac{2h+1}{m+1}$, and both angles lie in \([0,\pi/2]\).  Since \(\cos^2\) is decreasing on \([0,\pi/2]\), we have
\[
        4\cos^2\frac{2h\pi}{m}
        \ge
        4\cos^2\frac{(2h+1)\pi}{m+1}.
\]
This proves the odd partial-sum inequalities, and hence
\((c_i)\succ_w(p_i)\).

For fixed \(x\ge0\), the function $f_x(y):=(\sqrt y-x)_+^2$ is increasing and convex on \([0,\infty)\). Indeed, it vanishes on
\([0,x^2]\), while for \(y>x^2\),
\[
        f_x'(y)=1-\frac{x}{\sqrt y}\ge0,
        \qquad
        f_x''(y)=\frac{x}{2y^{3/2}}\ge0.
\]
By the standard characterization of weak submajorization by increasing convex functions \cite[Section~A.4]{MarshallOlkinArnold2011}, we have
\[
        \sum_i f_x(c_i)\ge \sum_i f_x(p_i).
\]
This is exactly \eqref{eq:lg-cycle-terminal-adj-comparison}, and the lemma follows.
\end{proof}

These local ingredients give the sharp edge-count signless Laplacian
stop-loss comparison.

\begin{theorem}\label{thm:edge-count-signless-stoploss}
Let \(G\) be a connected graph with \(m\ge1\) edges. Then, for every
\(t\ge2\),
\[
        \Psi_t(G)\ge \Psi_t(P_{m+1}).
\]
\end{theorem}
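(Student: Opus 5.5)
We plan to argue by a deletion-minimal counterexample, with the local Lemmas~\ref{lem:lg-support-edge} and~\ref{lem:lg-cycle-terminal} handling the two configurations that cannot be reduced directly. Suppose the statement fails. Among all counterexamples (for some \(t\ge2\)), choose a connected graph \(G\) with the fewest edges \(m\).

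\textbf{Trees and small cases.} If \(G\) is a tree, then \(G\) has \(n=m+1\) vertices. Remark~\ref{rem:laplacian-stoploss-form} then gives \(\Psi_t(G)\ge\Psi_t(P_{m+1})\) directly, a contradiction. So \(G\) contains a cycle, and in particular \(m\ge3\).

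\textbf{Monotonicity under edge addition.} If \(G'\) is obtained from \(H\) by adding one edge, then \(Q(G')=Q(H)+bb^\top\) up to zero padding. The spectral-shift formula gives \(\Psi_t(G')-\Psi_t(H)=2J_E(t)\ge0\).

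\textbf{Reduction steps.} We use minimality as follows.

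\emph{(i) Degree-one vertices.} Suppose \(G\) has a vertex \(w\) of degree \(1\). Then \(G-w\) is connected with \(m-1\) edges, and minimality gives \(\Psi_t(G-w)\ge\Psi_t(P_m)\). We need the increment \(\Psi_t(G)-\Psi_t(G-w)\) to dominate the path increment \(2J_{F_{m-1}}(t)\). The pendant edge \(e=uw\) has \(b=e_u+e_w\) and \(b^\top Q(G-w)b=d_{G-w}(u)\ge1\). By the Jensen argument of Lemma~\ref{lem:lg-support-edge}, the increment is at least \(2\int_0^1(\tfrac{d}{2}+2\theta-t)_+\,d\theta\), where \(d=d_{G-w}(u)\). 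This alone seems not to beat the path increment when \(d=1\). When \(d=1\), I would instead remove the whole pendant path ending at \(w\) and argue inductively.

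Rather than that, the cleaner plan is to reduce to graphs without such structures. We delete a non-bridge edge \(e\): \(G-e\) is connected with \(m-1\) edges, so \(\Psi_t(G-e)\ge\Psi_t(P_m)\). It then suffices that
\[
\Psi_t(G)-\Psi_t(G-e)\ge\Psi_t(P_{m+1})-\Psi_t(P_m).
\]

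\emph{(ii) Non-bridge edges of high degree sum.} If some non-bridge edge \(e=au\) has \(d_G(a)+d_G(u)\ge6\), then \(b^\top Q(G-e)b\ge4\). Jensen gives an increment of at least \(2\int_0^1(2+2\theta-t)_+\,d\theta\). This should dominate the path increment through Lemma~\ref{lem:lg-path-endpoint-envelope}, comparing with \(L_{\rm av}\) on \([2,3]\). On \([3,4]\), I would instead prove a spectral-radius bound plus a fifth-moment bound as in Lemma~\ref{lem:lg-support-edge}. If necessary, \(F_m\subset[0,4]\), \(|F_m|=2\) and Lemma~\ref{lem:spectral-shift-packing} give \(J_{F_m}\le\mathcal I_{[2,4]}\). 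We then check that the Jensen lower bound exceeds this for a high-degree-sum edge. For instance, \(b^\top Q(G-e)b\ge 5\) gives \(2\int_0^1(\tfrac52+2\theta-t)_+\,d\theta\), which one compares directly with \(\mathcal I_{[2,4]}(t)\).

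\emph{(iii) Reduction to a cycle.} Once (ii) is excluded, every cycle edge has small endpoint degrees. We also arrange \(\Delta(G)\le3\) with no two adjacent degree-3 vertices: a degree-\(\ge4\) vertex on a cycle yields a high-degree-sum non-bridge edge, and pendant-tree parts are handled via (i). If \(G\) has a vertex of degree \(3\) on a cycle, some non-bridge cycle edge \(au\) has \(d(a)=3\) and \(d(u)=2\). Lemma~\ref{lem:lg-support-edge} then gives the increment inequality, a contradiction. Otherwise every cycle vertex has degree \(2\), and connectivity forces \(G=C_m\), which is excluded by Lemma~\ref{lem:lg-cycle-terminal}.

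\textbf{Main obstacle.} The main obstacle is the case analysis in (i)–(ii): showing that every non-cycle, non-tree minimal counterexample has a non-bridge edge whose deletion increment beats the path endpoint increment \(2J_{F_{m-1}}(t)\) uniformly in \(t\ge2\). I would first try to handle pendant trees by contracting to the \(2\)-core and using Remark~\ref{rem:laplacian-stoploss-form}-type tree comparisons. For high-degree edges, I would use the Jensen bound against the packing bound \(\mathcal I_{[2,4]}(t)\) of Lemma~\ref{lem:spectral-shift-packing}.
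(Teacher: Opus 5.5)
Your overall architecture is the paper's: a minimal counterexample, a Jensen lower bound played against the packing bound, then Lemma~\ref{lem:lg-support-edge} and Lemma~\ref{lem:lg-cycle-terminal}. There is, however, a genuine gap exactly where you flag the main obstacle.

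You run the degree-sum reduction only for non-bridge edges. Your step (i) does not close either: for $d=1$ the Jensen bound vanishes for $t\ge5/2$, while the path increment does not. So you never establish the \emph{global} hypotheses of Lemma~\ref{lem:lg-support-edge}, namely $\Delta(G)\le3$ and no two adjacent degree-$3$ vertices anywhere in $G$. These cannot be localized to the cycle, because they are what makes the weight vector $w$ certify $\rho(M_\theta)\le5$. A concrete instance is two triangles joined by a bridge. This graph has no pendant vertex and no non-bridge edge of degree sum $\ge6$. Yet the bridge joins two degree-$3$ vertices, so none of your steps applies legitimately.

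The missing idea is that the degree-sum reduction works verbatim for bridges. This makes any separate treatment of pendant vertices or pendant trees unnecessary.

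\emph{The Jensen bound suffices as is.} If $e=xy$ has $d_G(x)+d_G(y)\ge6$, then $b^\top Q(G-e)b\ge4$. Jensen gives $\Psi_t(G)-\Psi_t(G-e)\ge4\int_0^1(2+2\theta-t)_+\,d\theta=2\mathcal I_{[2,4]}(t)$. This already matches the bound $2\mathcal I_{[2,4]}(t)$ on the path increment from Lemma~\ref{lem:spectral-shift-packing}, for all $t\ge2$. No envelope or fifth-moment argument is needed, and you do not need $b^\top Q(G-e)b\ge5$.

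\emph{The bridge case.} If $e$ is a bridge, apply minimality to each component of $G-e$ (an isolated vertex is trivial). Additivity then gives $\Psi_t(G-e)\ge\Psi_t(P_{m_1+1})+\Psi_t(P_{m_2+1})$ with $m_1+m_2+1=m$. Now $P_{m+1}$ is obtained from $P_{m_1+1}\sqcup P_{m_2+1}$ by joining two endpoints. This is a rank-one signless-Laplacian update of trace $2$ with all spectra in $[0,4]$, so its cost is at most $2\mathcal I_{[2,4]}(t)$ by the same packing lemma.

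\emph{Consequences.} Every edge of a minimal counterexample therefore has degree sum at most $5$. This gives $\Delta(G)\le3$, since a vertex of degree $\ge4$ has a neighbor of degree $\ge2$ unless $G$ is a star. It also rules out adjacent degree-$3$ vertices. Pendant vertices are then harmless, and your step (iii) goes through as stated.
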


\begin{proof}
Assume that the assertion fails.  Choose a connected counterexample \(G\)
with the minimum possible number \(m\) of edges, and fix \(t\ge2\) such that
\[
        \Psi_t(G)<\Psi_t(P_{m+1}).
\]
The tree case is already contained in
Remark~\ref{rem:laplacian-stoploss-form}, because a tree with \(m\) edges has
\(m+1\) vertices.  Hence \(G\) is not a tree, and therefore \(G\) contains a
cycle.

We first show that every edge \(xy\in E(G)\) satisfies
\begin{equation}\label{eq:edge-count-degree-sum}
        d_G(x)+d_G(y)\le5 .
\end{equation}
Suppose, to the contrary, that \(e=xy\) satisfies \(d_G(x)+d_G(y)\ge6\), and put \(H=G-e\). Let \(b=e_x+e_y\), put \(M=Q(H)\), and set
\(M_\theta=M+\theta bb^\top\) for \(0\le\theta\le1\). Then
\(Q(G)=Q(H)+bb^\top\). Moreover, since \(x\) and \(y\) are not adjacent in \(H\),
\[
        b^\top Q(H)b
        =
        d_H(x)+d_H(y)
        =
        d_G(x)+d_G(y)-2
        \ge4.
\]
By Lemma~\ref{lem:rank-one-trace-formula},
\[
        \Psi_t(G)-\Psi_t(H)
        =
        2\int_0^1 b^\top(M_\theta-tI)_+b\,d\theta .
\]
Moreover,
\[
        b^\top M_\theta b
        =
        b^\top Q(H)b+\theta\|b\|_2^4
        \ge
        4+4\theta.
\]
Applying Jensen's inequality to the convex function \(s\mapsto(s-t)_+\), we get
\[
        b^\top(M_\theta-tI)_+b
        \ge
        \|b\|_2^2
        \left(
        \frac{b^\top M_\theta b}{\|b\|_2^2}-t
        \right)_+
        \ge
        2(2+2\theta-t)_+ .
\]
Therefore
\begin{equation}\label{eq:edge-count-large-degree-gain}
        \Psi_t(G)-\Psi_t(H)
        \ge
        4\int_0^1(2+2\theta-t)_+\,d\theta
        =
        2\mathcal I_{[2,4]}(t).
\end{equation}

If \(e\) is not a bridge, then \(H\) is connected with \(m-1\) edges.  By the
minimality of \(G\),
\[
        \Psi_t(H)\ge\Psi_t(P_m).
\]
The endpoint update \(P_m\rightsquigarrow P_{m+1}\) has spectral-shift interval set contained in \([0,4]\) and of total length \(2\). Hence, by~\eqref{eq:path-half-increment_v1}, Lemma~\ref{lem:spectral-shift-packing} and Lemma~\ref{lem:lg-path-endpoint-envelope},
\[
        \Psi_t(P_{m+1})-\Psi_t(P_m)
        \le
        2\mathcal I_{[2,4]}(t).
\]
Combining this with
\eqref{eq:edge-count-large-degree-gain}, we obtain
\[
        \Psi_t(G)
        \ge
        \Psi_t(P_{m+1}),
\]
contradicting the choice of \(G\).

Now suppose that \(e\) is a bridge.  Let \(H_1\) and \(H_2\) be the two
components of \(H\), and let \(m_i=|E(H_i)|\).  By minimality, together with
the trivial isolated-vertex case, we have
\[
        \Psi_t(H_i)\ge\Psi_t(P_{m_i+1})
        \qquad(i=1,2).
\]
Since \(\Psi_t\) is additive over disjoint unions,
\[
        \Psi_t(H)
        \ge
        \Psi_t(P_{m_1+1})+\Psi_t(P_{m_2+1}).
\]
The graph \(P_{m+1}\) is obtained from
\(P_{m_1+1}\sqcup P_{m_2+1}\) by joining two endpoints, since
\(m=m_1+m_2+1\).  This signless Laplacian update is again a rank-one update
of trace \(2\), and both the initial and final path spectra are contained in
\([0,4]\).  Thus its cost is at most
\(2\mathcal I_{[2,4]}(t)\). Together with
\eqref{eq:edge-count-large-degree-gain}, this again gives
\[
        \Psi_t(G)\ge\Psi_t(P_{m+1}),
\]
a contradiction. Therefore~\eqref{eq:edge-count-degree-sum} holds.

We next record two structural consequences.  First, \(\Delta(G)\le3\).  Indeed,
if some vertex \(v\) had degree at least \(4\), then, since \(G\) is connected
and contains a cycle, \(v\) would have a neighbor \(w\) with \(d_G(w)\ge2\)
along a path from \(v\) to a cycle.  The edge \(vw\) would then satisfy
\[
        d_G(v)+d_G(w)\ge4+2=6,
\]
contradicting~\eqref{eq:edge-count-degree-sum}. Second, no two degree-\(3\) vertices are adjacent, because such an edge would have degree-sum \(6\).

If \(G\) is not a cycle, then some cycle vertex \(a\) has degree \(3\).  Its
two cycle neighbors have degree \(2\), since no two degree-\(3\) vertices are
adjacent.  Choose one of them, say \(u\), and let \(e=au\).  This edge lies on
a cycle, and hence is not a bridge.  Lemma~\ref{lem:lg-support-edge} gives
\[
        \Psi_t(G)-\Psi_t(G-e)
        \ge
        \Psi_t(P_{m+1})-\Psi_t(P_m).
\]
Since \(G-e\) is connected with \(m-1\) edges, minimality gives $\Psi_t(G-e)\ge\Psi_t(P_m)$. Adding the last two inequalities yields
\[
        \Psi_t(G)\ge\Psi_t(P_{m+1}),
\]
again contradicting the choice of \(G\).

Therefore the only remaining minimal counterexample would be a cycle. But cycles are excluded by Lemma~\ref{lem:lg-cycle-terminal}.  This final contradiction proves the theorem.
\end{proof}

The edge-count comparison implies the sharp positive \(p\)-energy bound for line graphs.

\begin{theorem}\label{thm:line-graph-positive-energy}
Let \(G\) be a connected graph with \(m\ge1\) edges, and let
\(\mathcal L(G)\) be its line graph.  Then, for every real number \(p\ge2\),
\[
        \mathcal E_p^+\bigl(\mathcal L(G)\bigr)
        \ge
        \mathcal E_p^+(P_m).
\]
\end{theorem}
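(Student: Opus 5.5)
The plan is to convert both sides into integrals of the second-order stop-loss functional $\Psi_t$ over thresholds $t\ge2$, and then apply Theorem~\ref{thm:edge-count-signless-stoploss} pointwise in $t$.

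\textbf{Reduction to $Q$-spectra.} By the identity recalled at the start of this subsection,
\[
\mathcal E_p^+(\mathcal L(G))=\sum_i\bigl(\lambda_i(Q(G))-2\bigr)_+^p .
\]
We apply the same identity to $P_{m+1}$, whose line graph is $P_m$. This gives
\[
\mathcal E_p^+(P_m)=\mathcal E_p^+(\mathcal L(P_{m+1}))=\sum_i\bigl(\lambda_i(Q(P_{m+1}))-2\bigr)_+^p .
\]
It therefore suffices to prove, for every $p\ge2$,
\[
\sum_i\bigl(\lambda_i(Q(G))-2\bigr)_+^p\ge\sum_i\bigl(\lambda_i(Q(P_{m+1}))-2\bigr)_+^p .
\]

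\textbf{The case $p=2$.} This is exactly Theorem~\ref{thm:edge-count-signless-stoploss} at $t=2$, since both sides equal $\Psi_2$ of the respective graphs.

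\textbf{The case $p>2$.} For $y\ge0$ and $p>2$ we use the integral representation
\[
(y-2)_+^p=\frac{p(p-1)(p-2)}{2}\int_2^\infty (t-2)^{p-3}\,(y-t)_+^2\,dt .
\]
To verify it, substitute $s=t-2$ and $z=(y-2)_+$. The integral becomes
\[
\int_0^z s^{p-3}(z-s)^2\,ds=z^p\,B(p-2,3)=\frac{2z^p}{p(p-1)(p-2)} .
\]
The integral converges at $s=0$ because $p-3>-1$.

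Summing over the eigenvalues of $Q(G)$ and exchanging the finite sum with the integral gives
\[
\sum_i\bigl(\lambda_i(Q(G))-2\bigr)_+^p=\frac{p(p-1)(p-2)}{2}\int_2^\infty (t-2)^{p-3}\,\Psi_t(G)\,dt .
\]
The same formula holds with $P_{m+1}$ in place of $G$. The weight $(t-2)^{p-3}$ is nonnegative on $(2,\infty)$. Theorem~\ref{thm:edge-count-signless-stoploss} gives $\Psi_t(G)\ge\Psi_t(P_{m+1})$ for every $t\ge2$. Integrating this inequality against the weight yields the claim.

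\textbf{Main obstacle.} All the substantive content lies in Theorem~\ref{thm:edge-count-signless-stoploss}. The only points requiring care here are the following.
\begin{itemize}
\item The integral representation must be justified, including convergence of the weight near $t=2$ when $2<p<3$.
\item The identification $\mathcal L(P_{m+1})=P_m$ is needed.
\item Extra zero or $-2$ eigenvalues must contribute nothing to either side. This holds because the positive parts vanish for thresholds $t\ge2$.
\end{itemize}
Each of these is routine.
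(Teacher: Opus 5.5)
Your proposal is correct and follows the paper's proof essentially step for step. Both use the incidence-matrix reduction to $\sum_i(\lambda_i(Q(G))-2)_+^p$ and obtain the case $p=2$ from Theorem~\ref{thm:edge-count-signless-stoploss} at $t=2$. Both handle $p>2$ with the same integral representation of $y_+^p$ against $(y-s)_+^2s^{p-3}$, and both use the identification $\mathcal L(P_{m+1})=P_m$. Your Beta-function verification of the representation, which the paper calls elementary without proof, is a welcome extra detail.
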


\begin{proof}
Let \(N_G\) be the unsigned vertex-edge incidence matrix of \(G\).  Then
\[
        N_GN_G^\top=Q(G),
        \qquad
        N_G^\top N_G=2I+A\bigl(\mathcal L(G)\bigr).
\]
The matrices \(N_GN_G^\top\) and \(N_G^\top N_G\) have the same nonzero
eigenvalues.  Therefore the positive adjacency eigenvalues of
\(\mathcal L(G)\) are exactly the positive numbers among
\(\lambda_i(Q(G))-2\).  Hence, for every \(p>0\),
\begin{equation}\label{eq:line-graph-positive-energy-q-shift}
        \mathcal E_p^+\bigl(\mathcal L(G)\bigr)
        =
        \sum_i\bigl(\lambda_i(Q(G))-2\bigr)_+^p .
\end{equation}

For \(p=2\), Theorem~\ref{thm:edge-count-signless-stoploss}, applied with
threshold \(2\), gives
\[
        \sum_i\bigl(\lambda_i(Q(G))-2\bigr)_+^2
        \ge
        \sum_i\bigl(\lambda_i(Q(P_{m+1}))-2\bigr)_+^2.
\]

Now assume \(p>2\). We shall use the elementary representation
\[
        y_+^p
        =
        \frac{p(p-1)(p-2)}2 \int_0^\infty (y-s)_+^2s^{p-3}\,ds
        \qquad(y\in\mathbb R).
\]
For every \(s\ge0\), the threshold \(2+s\) is at least \(2\), and hence
Theorem~\ref{thm:edge-count-signless-stoploss} gives
\[
        \sum_i\bigl(\lambda_i(Q(G))-2-s\bigr)_+^2
        \ge
        \sum_i\bigl(\lambda_i(Q(P_{m+1}))-2-s\bigr)_+^2 .
\]
Multiplying this inequality by the positive weight \(\frac{p(p-1)(p-2)}2 s^{p-3}\) and integrating over \(s\in[0,\infty)\), we obtain
\[
        \sum_i\bigl(\lambda_i(Q(G))-2\bigr)_+^p
        \ge
        \sum_i\bigl(\lambda_i(Q(P_{m+1}))-2\bigr)_+^p.
\]
Together with the \(p=2\) case, this inequality holds for every \(p\ge2\).

Finally, \(\mathcal L(P_{m+1})=P_m\).  Applying~\eqref{eq:line-graph-positive-energy-q-shift} to the path \(P_{m+1}\), we get
\[
        \sum_i\bigl(\lambda_i(Q(P_{m+1}))-2\bigr)_+^p
        =
        \mathcal E_p^+(P_m).
\]
Combining this equality with~\eqref{eq:line-graph-positive-energy-q-shift} proves
\[
        \mathcal E_p^+\bigl(\mathcal L(G)\bigr)
        \ge
        \mathcal E_p^+(P_m).
\qedhere\]
\end{proof}

This theorem has the following square-energy interpretation.

\begin{remark}\label{rem:line-graph-claw-free-square-energy}
Recall that \(s^+(H)=\mathcal E_2^+(H)\) denotes the positive square energy of a graph \(H\). Taking $p=2$ in Theorem~\ref{thm:line-graph-positive-energy} gives
\[
        s^+\bigl(\mathcal L(G)\bigr)
        \ge
        s^+(P_m)=m-1=|V(\mathcal L(G))|-1.
\]
Thus Theorem~\ref{thm:line-graph-positive-energy} gives the line-graph case of the positive square-energy lower bound. This should be compared with the recent theorem of Akbari, Kumar, Mohar, Pragada and Zhang, who prove a stronger \(p=2\) refinement for connected claw-free graphs with maximum degree at least \(3\)~\cite[Theorem~1.1]{AkbariKumarMoharPragadaZhang2025Refinement}. Since line graphs are claw-free, their result applies to line graphs satisfying this maximum-degree hypothesis in a broader graph class. The point of Theorem~\ref{thm:line-graph-positive-energy} is different: within the class of line graphs, it gives the same sharp path lower bound for every real exponent \(p\ge2\).
\end{remark}

\appendix

\section{Exact line-graph scalar and local-moment checks}
\label{app:linegraph-scalar-check}

This appendix records the finite checks used in
Lemmas~\ref{lem:lg-path-endpoint-envelope} and~\ref{lem:lg-support-edge}.  The
checks are exact rational computations.  The auxiliary SageMath checks were
carried out in SageMath~\cite{SageMath10.8}.

We use the notation
\[
        A_*(t)=2140-7t^4,
        \qquad
        B_*(t)=4414-4t^4,
        \qquad
        K_*(t)=5(t^3+5t^2+25t+125),
\]
\[
        L_{\rm av}(t)=\frac12\left(\frac72-t\right)^2,
        \qquad
        \widehat L_*(t)=\frac12A_*(t)^2B_*(t)^2+1000A_*(t)^3,
        \qquad
        M_r:=\frac1{r+1}\binom{2r+2}{r+1}.
\]
The scalar inequalities reduce to the positivity of the following six
polynomials on the stated intervals:
\[
\begin{array}{c|c}
\text{polynomial} & \text{interval} \\ \hline
 t^2L_{\rm av}(t)-M_3\dfrac{2^2}{3^3} & [2,12/5] \\[2mm]
 t^4L_{\rm av}(t)-M_5\dfrac{4^4}{5^5} & [12/5,14/5] \\[2mm]
 t^7L_{\rm av}(t)-M_8\dfrac{7^7}{8^8} & [14/5,3] \\[2mm]
 t^9\widehat L_*(t)-M_{10}\dfrac{9^9}{10^{10}}B_*(t)^3K_*(t) & [3,7/2] \\[2mm]
 t^{29}\widehat L_*(t)-M_{30}\dfrac{29^{29}}{30^{30}}B_*(t)^3K_*(t) & [7/2,58/15] \\[2mm]
 \widehat L_*(t)-M_{30}\dfrac{4-t}{4^{30}}B_*(t)^3K_*(t) & [58/15,4].
\end{array}
\]

We certify these inequalities by Bernstein coefficients.  More precisely, if
\(P\in\QQ[t]\), \(a<b\) are rational, and \(m\ge\deg P\), the degree-\(m\)
Bernstein coefficients \(b_0,\ldots,b_m\) of \(P\) on \([a,b]\) are defined by
\[
        P((1-x)a+xb)
        =
        \sum_{k=0}^m b_k\binom{m}{k}x^k(1-x)^{m-k}.
\]
If \(b_k\ge0\) for all \(k\), then \(P(t)\ge0\) on \([a,b]\).  In the present
case, the SageMath verification script computes all Bernstein coefficients
over \(\QQ\) and checks that they are strictly positive for all six rows of the
table.

We next record the finite local fifth-moment checks used in
Lemma~\ref{lem:lg-support-edge}.  For the stable local model one has
\[
        q_*(\theta)
        =
        64\theta^5+240\theta^4+472\theta^3
        +603\theta^2+512\theta+249.
\]
For the exceptional lengths \(\ell=2,3,4,5\), the exact differences are
\[
\begin{aligned}
b^\top(Q_2+\theta P)^5b-q_*(\theta)
    &=64\theta^3+192\theta^2+256\theta+147,\\
b^\top(Q_3+\theta P)^5b-q_*(\theta)
    &=24\theta^2+68\theta+70,\\
b^\top(Q_4+\theta P)^5b-q_*(\theta)
    &=8\theta+18,\\
b^\top(Q_5+\theta P)^5b-q_*(\theta)
    &=2.
\end{aligned}
\]
All coefficients are non-negative.  For \(\ell=6\), one has equality,
\[
        b^\top(Q_6+\theta P)^5b=q_*(\theta),
\]
and this value is stable for all \(\ell\ge6\), since walks of length at most
five cannot see farther along the path.

Finally, we record the elementary rational identities used in the
high-threshold part of Lemma~\ref{lem:lg-support-edge}.  First,
\[
\begin{aligned}
&q_*(1-z)-t^4(3+4(1-z)) \\
&\qquad =A_*(t)-B_*(t)z+4099z^2-2072z^3+560z^4-64z^5.
\end{aligned}
\]
Let \(h(t)=A_*(t)/B_*(t)\).  On \(3\le t\le4\), both \(A_*(t)\) and \(B_*(t)\)
are positive, since
\[
        A_*(4)=348>0,
        \qquad
        B_*(4)=3390>0.
\]
Moreover,
\[
        \frac{1573}{4090}B_*(t)-A_*(t)
        =
        \frac{11169}{2045}(t-3)(t+3)(t^2+9)\ge0
        \qquad(3\le t\le4).
\]
Therefore
\[
        0<h(t)\le \frac{1573}{4090}
        \qquad(3\le t\le4).
\]
The remaining rational margin is
\[
4099-2072\frac{1573}{4090}
-64\left(\frac{1573}{4090}\right)^3-3000
=
\frac{2552631003539}{8552241125}>0.
\]
Consequently,
\[
        4099-2072z-64z^3>3000
        \qquad
        \left(0\le z\le\frac{1573}{4090}\right),
\]
which is the rational lower bound used in the proof.

The full SageMath script used to check the Bernstein certificates and the
finite local fifth-moment coefficient inequalities is
\[
        \texttt{applications/verify\_line\_graph.sage}
\]
in the public GitHub repository~\cite{Git}.  The corresponding output log is
\[
        \texttt{applications/verify\_line\_graph\_output.txt}.
\]
The script was run with SageMath~10.8.  All computations are performed over
\(\QQ\), with no floating-point arithmetic.

\section*{Acknowledgments and AI disclosure}

The authors thank Clive Elphick for helpful comments and suggestions. Q.~Tang was partially supported by the National Key Research and Development Program of China under grant 2023YFA1010203. Y.~Liu  was partially supported by Beijing Natural Science Foundation under grant QY26408.

The authors used GPT-5.5 Pro for limited
assistance with drafting, code generation, and presentation.  In particular,
AI assistance partly drafted the code included in the appendix, and the authors
then checked, edited, and verified it.

The mathematical ideas, proof strategy, and final arguments are due to the
authors.  The authors used AI tools only as aids for drafting, coding, and
presentation.  The authors are fully responsible for all statements, proofs,
computations, and figures in the paper.

\end{document}